%
%
%
%

\documentclass[12pt,oneside]{amsart}

\usepackage{amssymb,amsmath}
\usepackage{amsthm}
\usepackage{enumitem}


\textwidth=16.00cm
\textheight=22.00cm
\topmargin=0.00cm
\oddsidemargin=0.00cm
\evensidemargin=0.00cm
\headheight=0cm
\headsep=1cm
\headsep=0.5cm
\numberwithin{equation}{section}
\hyphenation{semi-stable}
\setlength{\parskip}{3pt}



\usepackage{tikz}
\usetikzlibrary{arrows}


\newtheorem{thm}{Theorem}[section]
\newtheorem*{thm_nonumber}{Theorem}
\newtheorem{lem}[thm]{Lemma}
\newtheorem{cor}[thm]{Corollary}

\newtheorem{ques}[thm]{Question}

\theoremstyle{definition}
\newtheorem{defn}[thm]{Definition}
\newtheorem{exmp}[thm]{Example}
\newtheorem{rem}[thm]{Remark}

\numberwithin{equation}{section}


\def\X{{\mathbb X}}
\def\H{{\text {\bf H}}}

\def\P{{\mathbb P}}
\def\ds{\displaystyle}
\def\reg{{\rm reg}}

\def\k{\Bbbk}
\def\deg{{\rm deg}}

\renewcommand{\leq}{\leqslant}
\renewcommand{\geq}{\geqslant}
\renewcommand{\le}{\leqslant}

\DeclareMathOperator{\sdef}{sdefect}


\begin{document}


\title{The symbolic defect of an ideal}

\author{Federico Galetto}
\address{Department of Mathematics and Statistics\\
McMaster University, Hamilton, ON, L8S 4L8}
\email{galettof@math.mcmaster.ca}

\author{Anthony V. Geramita${}^\dag$}
\thanks{${}^\dag$ Deceased, June 22, 2016.}

\author[Y.S. SHIN]{Yong-Su Shin${}^*$}
\address{Department of Mathematics, 
Sungshin Women's University, Seoul, Korea, 136-742}
\email{ysshin@sungshin.ac.kr }
\thanks{${}^*$This research was supported by a grant 
from Sungshin Women's University. Corresponding author}

\author{Adam Van Tuyl${}^{**}$}
\address{Department of Mathematics and Statistics\\
McMaster University, Hamilton, ON, L8S 4L8}
\email{vantuyl@math.mcmaster.ca}
\thanks{${}^{**}$This research was supported 
in part by NSERC Discovery Grant 2014-03898.}

\keywords{symbolic powers, regular powers, points, star configurations}
\subjclass[2010]{13A15, 14M05}

\begin{abstract}
  Let $I$ be a homogeneous ideal of $\k[x_0,\ldots,x_n]$.  To compare
  $I^{(m)}$, the $m$-th symbolic power of $I$, with $I^m$, the regular
  $m$-th power, we introduce the $m$-th symbolic defect of $I$,
  denoted $\sdef(I,m)$.  Precisely, $\sdef(I,m)$ is
  the minimal number of generators of the $R$-module $I^{(m)}/I^m$, or
  equivalently, the minimal number of generators one must add to $I^m$
  to make $I^{(m)}$.  In this paper, we take the first step towards
  understanding the symbolic defect by considering the case that $I$
  is either the defining ideal of a star configuration or the ideal
  associated to a finite set of points in $\mathbb{P}^2$.  We are
  specifically interested in identifying ideals $I$ with
  $\sdef(I,2) = 1$.
\end{abstract}

\maketitle



\section{Introduction}\label{sec:intro}

Let $I$ be a homogeneous ideal of $R = \k[x_0,\ldots,x_n]$.  For any 
positive integer $m$, let $I^{(m)}$ denote the $m$-th symbolic power 
of $I$.  In general, we have $I^m \subseteq I^{(m)}$, but equality may
fail.  During the last decade, there has been interest in the 
so-called ``ideal containment problem,'' that is, for a fixed integer $m$,
find the smallest integer $r$ such that $I^{(r)} \subseteq I^m$.  The 
papers \cite{BH,BH2,D+,DHST,ELS,HH,HoHu,SS} are a small subset 
of the articles on this problem.

In this note, we are also interested in comparing regular and symbolic
powers of ideals, but we wish to investigate a relatively unexplored direction
by measuring the ``difference'' between the two ideals $I^m$ and $I^{(m)}$.
More precisely, because $I^m \subseteq I^{(m)}$, the quotient $I^{(m)}/I^m$
is a finitely generated graded $R$-module.
For any $R$-module $M$, let $\mu(M)$ denote the number of minimal generators
of $M$.  We then define the 
{\it $m$-th symbolic defect of $I$} to be the invariant
\[\sdef(I,m) := \mu(I^{(m)}/I^m),\]
that is, the minimal number of generators of $I^{(m)}/I^m$.  
We will call the sequence 
\[\left\{\sdef(I,m)\right\}_{m \in \mathbb{N}}\]
the {\it symbolic defect sequence}.
Note  that $\sdef(I,m)$ counts the number of generators we need
to add to $I^m$ to make $I^{(m)}$; this invariant can be viewed
as a measure of the failure of $I^m$ to
equal $I^{(m)}$.  For example, $\sdef(I,m) = 0$ if and only if
$I^m = I^{(m)}$.  

We know of only a few papers that have studied the module
$I^{(m)}/I^m$.  This list includes: Arsie and Vatne's paper \cite{AV}
which considers the Hilbert function of $I^{(m)}/I^m$; Huneke's work
\cite{Huneke} which considers $P^{(2)}/P^2$ when $P$ is a height two
prime ideal in a local ring of dimension three; Herzog's paper
\cite{Herzog} which studies the same family of ideals as Huneke using
tools from homological algebra; Herzog and Ulrich's paper \cite{HeUl}
and Vasconcelos's paper \cite{Vas} which also consider a similar
situation to Huneke, but with the assumption that $P$ is generated by
three elements; and Schenzel's work \cite{Shen:2} which describes some
families of prime ideals $P$ of monomial curves with the property that
$P^{(2)}/P^2$ is cyclic (see the comment after \cite[Theorem
2]{Shen:2}).

The introduction of the symbolic defect sequence raises a number of
interesting questions.  For example, how large can
$\sdef(I,m)$ be?  how does $\sdef(I,m)$ compare to
$\sdef(I,m+1)$? and so on.  In some sense, these questions are
difficult since one needs to know both $I^{(m)}$ and $I^m$. To gain
some initial insight into the behavior of the symbolic defect
sequence, in this paper we focus on two cases: (1) $I$ is the defining
ideal of a star configuration, and (2) $I$ is the homogeneous ideal
associated to a set of points in $\mathbb{P}^2$.  In both cases, we
can tap into the larger body of knowledge about these ideals.

To provide some additional focus to our paper, we consider the
following question:

\begin{ques}\label{mainquestion}
  What homogeneous ideals $I$ of $\k[x_0,\ldots,x_n]$ have
  $\sdef(I,2) = 1$?
\end{ques}

\noindent
Because one always has $\sdef(I,1) = 0$, Question
\ref{mainquestion} is in some sense the first non-trivial case to
consider.  Note that when $\sdef(I,2) = 1$, then from an
algebraic point of view, the ideal $I^2$ is almost equal to $I^{(2)}$
except that it is missing exactly one generator.

We now give an outline of the results of this paper.  In Section 2, we
provide the relevant background, and recall some useful tools about
powers of ideals and their symbolic powers.

In Sections 3 through 5, we study $\sdef(I,m)$ when $I$
defines a star configuration.  Note that in this paper, when we refer
to star configurations, the forms that define the star configurations
are forms of any degree, not necessarily linear, which is required in
other papers.  Our main strategy to compute $\sdef(I,m)$ is to
find an ideal $J$ such that $I^{(m)} = J + I^m$, and then to show that
all the minimal generators of $J$ are required.  The recent
techniques using matroid ideals developed by Geramita, Harbourne,
Migliore, and Nagel \cite{GHMN} will play a key role in our proofs.
Our results will imply a similar decomposition found by
Lampa-Baczy{\'n}ska and Malara \cite{LBM} which considers only star
configurations defined using monomial ideals.

In Section 3 we also compute some values of $\sdef(I,m)$ with
$m \geq 3$ for some special families of star configurations.  Section
4 complements Section 3 by showing that under some extra hypotheses,
$\sdef(I,2)=1$ can force a geometric condition.  Specifically,
we show that if $\X$ is a set of points in $\mathbb{P}^2$ with a
linear graded resolution, and if $\sdef(I,2) =1$, then $I$
must be the ideal of a linear star configuration of points in
$\mathbb{P}^2$.  In Section 5 we apply our results of Section 3 to
compute the graded minimal free resolution of $I^{(2)}$ when $I$
defines a star configuration of codimension two in $\mathbb{P}^n$.
This result gives a partial generalization of a result of Geramita,
Harbourne, and Migliore \cite{GHM} (see Remark \ref{generalize}).

In Section 6, we turn our attention to general sets of points in
$\mathbb{P}^2$.  Our main result is a classification of the general
sets of points whose defining ideals $I_\X$ satisfy
$\sdef(I_\X,2) = 1$.

\begin{thm_nonumber}[Theorem \ref{generalpoints}]
  Let $\X$ be a set of $s$ general points in $\mathbb{P}^2$ with
  defining ideal $I_\X$.  Then
  \begin{enumerate}
  \item[$(i)$] $\sdef(I_\X,2) = 0$ if and only if $s = 1,2$ or
    $4$.
  \item[$(ii)$] $\sdef(I_\X,2) = 1$ if and only if
    $s =3, 5, 7$, or $8$.
  \item[$(iii)$] $\sdef(I_\X,2) > 1$ if and only if $s=6$ or
    $s \geq 9$.
  \end{enumerate}
\end{thm_nonumber}

\noindent
Our proof relies on a deep result of Alexander-Hirschowitz \cite{AH:1}
on the Hilbert functions of general double points, and some results of
Catalisano \cite{C}, Harbourne \cite{H1}, and Id\`a \cite{I} on the
graded minimal free resolutions of double points.  We end this paper
with an example to show that the symbolic defect sequence is not
monotonic by computing some values of $\sdef(I_\X,m)$ when
$\X$ is eight general points in $\mathbb{P}^2$ (see Example
\ref{8points}).

\noindent {\bf Acknowledgments.}  Work on this project began in August
2015 when Y.S.~Shin and A.~Van Tuyl visited A.V.~(Tony) Geramita at
his house in Kingston, ON.  F.~Galetto joined this project in late
September of the same year.  Tony Geramita, however, became quite ill
in late December 2015 while in Vancouver, BC, and after a six month
battle with his illness, he passed away on June 22, 2016 in Kingston.
During his illness, we (the remaining co-authors) kept Tony up-to-date
of the status on the project, and when his health permitted, he would
contribute ideas to this paper.  He was looking forward to returning
to Kingston, and turning his attention to this paper.  Unfortunately,
this was not to be.  Although Tony was not able to see the final
version of this paper, we feel that his contributions warrant an
authorship.  Those familiar with Tony's work will hopefully
recognize Tony's interests and contributions to the
topics in this paper.  Tony is greatly missed.

We would also like to thank Brian Harbourne and Alexandra Seceleanu
for their helpful comments.  Part of this paper was written at the
Fields Institute; the authors thank the institute for its hospitality.
Finally, we would like to thank the referees for their helpful suggestions
and corrections.


\section{Background results}\label{sec:background}

We review the required background.  We continue to use the notation of
the introduction.  Let $I$ be a homogeneous ideal of
$R = \k[x_0,\ldots,x_n]$.  The {\it $m$-th symbolic power of $I$},
denoted $I^{(m)}$, is defined to be
\[I^{(m)} = \bigcap_{P \in {\rm Ass}(I)} (I^mR_P \cap R)\] where
${\rm Ass}(I)$ denotes the set of associated primes of $I$ and $R_P$
is the ring $R$ localized at the prime ideal $P$.
\begin{rem}
  There is some ambiguity in the literature concerning the notion of
  symbolic powers. The intersection in the definition is sometimes
  taken over all associated primes and sometimes just over the minimal
  primes of $I$. In general, these two possible definitions yield
  different results. However, they agree in the case of radical
  ideals.
\end{rem}
In general, $I^m \subseteq I^{(m)}$, but the reverse containment may
fail.  If $\sdef(I,m) = s$, then there exist $s$ homogeneous
forms $F_1,\ldots,F_s$ of $R$ such that
\[I^{(m)}/I^m = \langle F_1 + I^m, \ldots, F_s + I^m \rangle \subseteq
  R/I^m.\] It follows that
$I^{(m)} = \langle F_1,\ldots,F_s \rangle + I^m.$ Note that the ideal
$\langle F_1,\ldots,F_s \rangle$ is not unique. Indeed, if
$G_1,\ldots,G_s$ is another set of coset representatives such that
$I^{(m)}/I^m = \langle G_1 + I^m, \ldots, G_s + I^m \rangle$, we still
have $I^{(m)} = \langle G_1,\ldots,G_s \rangle + I^m$, but
$\langle F_1,\ldots, F_s \rangle$ and $\langle G_1,\ldots,G_s \rangle$
may be different ideals.

We state some simple facts about $\sdef(I,m)$.

\begin{lem} \label{sdefectlemma} Let $I$ be a homogeneous radical
  ideal of $R$.
  \begin{enumerate}[label=(\roman*)]
  \item $\sdef(I,1) = 0$.
  \item If $I$ is a complete intersection, then
    $\sdef(I,m) = 0$ for all $m \geq 1$.
  \end{enumerate}
\end{lem}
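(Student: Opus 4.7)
The plan is to unpack the definitions and invoke two standard facts from primary decomposition.

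For part (i), I would observe that when $I$ is a homogeneous radical ideal, its associated primes coincide with its minimal primes, and $I$ admits the primary decomposition $I = \bigcap_{P \in {\rm Ass}(I)} P$ (with $P$ itself being the $P$-primary component). For each such $P$, the localization-and-contraction $IR_P \cap R$ recovers the $P$-primary component of $I$, namely $P$ itself. Therefore
\[I^{(1)} = \bigcap_{P \in {\rm Ass}(I)} (IR_P \cap R) = \bigcap_{P \in {\rm Ass}(I)} P = I,\]
which gives $I^{(1)}/I = 0$ and so $\mu(I^{(1)}/I) = 0$.

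For part (ii), I want to show $I^m = I^{(m)}$ for every $m \ge 1$. The key classical fact is that if $I$ is generated by a regular sequence, then ${\rm Ass}(R/I^m) = {\rm Ass}(R/I)$ for every $m \ge 1$; equivalently, $I^m$ has no embedded associated primes (this can be deduced from the Cohen-Macaulayness of $R/I^m$, which follows because the associated graded ring of $I$ is a polynomial ring over $R/I$ when $I$ is a complete intersection). Since $I$ is also radical, ${\rm Ass}(I) = {\rm Min}(I)$, and so $I^m$ is unmixed with the same minimal primes as $I$. Consequently, $I^m$ equals the intersection of its own primary components, which in turn equal $I^m R_P \cap R$ for $P \in {\rm Min}(I)$. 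Thus
\[I^m = \bigcap_{P \in {\rm Ass}(I)} (I^m R_P \cap R) = I^{(m)},\]
so ${\rm sdefect}(I,m) = \mu(0) = 0$.

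There is no real obstacle here; both statements are essentially bookkeeping once the correct external facts are cited. The only subtlety is making sure the reader sees why, in the radical case, the ``associated primes'' versus ``minimal primes'' ambiguity flagged in the preceding remark is harmless, and why the equality ${\rm Ass}(R/I^m) = {\rm Ass}(R/I)$ for complete intersections implies $I^m$ has no embedded primes and hence is its own symbolic power. I would therefore simply state these two facts explicitly (with a reference to Matsumura or Bruns--Herzog for the complete-intersection fact) rather than proving them from scratch.
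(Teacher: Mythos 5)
Your proposal is correct and follows essentially the same route as the paper, which disposes of (i) as immediate from the definition and cites the classical Zariski--Samuel result (that ordinary and symbolic powers of an ideal generated by a regular sequence coincide, via the freeness of $I^m/I^{m+1}$ over $R/I$) for (ii). The only cosmetic remark is that for (i) the equality $I^{(1)}=I$ already holds for an arbitrary ideal once the intersection is taken over all associated primes, so the radical hypothesis is not actually needed there.
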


\begin{proof}
  $(i)$ This fact is trivial.  $(ii)$ This result follows from
  Zariski-Samuel \cite[Appendix 6, Lemma 5]{ZS}.
\end{proof}

Recall that $I$ is a {\it generic complete intersection} if the
localization of $I$ at any minimal associated prime of $I$ is a
complete intersection.  A result of \cite{Cetal,SV,Wey} will prove
useful:

\begin{thm}[{\cite[Corollary 2.6]{Cetal},\cite{SV}\cite{Wey}}]\label{resix2}
  Let $I$ be a homogeneous ideal of $\k[x_0,\ldots,x_n]$ that is
  perfect, codimension two, and a generic complete intersection.  If
  \[0 \longrightarrow F \longrightarrow G \longrightarrow I
    \longrightarrow 0\] is a graded minimal free resolution of $I$,
  then
  \[0 \longrightarrow \bigwedge^2 F \longrightarrow F\otimes G \rightarrow {\rm Sym}^2 G \longrightarrow I^2
    \longrightarrow 0\] is a graded minimal free resolution of $I^2$.
\end{thm}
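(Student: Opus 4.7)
The plan is to use the Hilbert--Burch theorem to put the resolution in standard form, explicitly construct the three maps in the proposed complex, verify it is a complex, and then prove acyclicity via the Buchsbaum--Eisenbud criterion (or, equivalently, via the theory of approximation complexes).

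First, since $I$ is perfect of codimension two, Hilbert--Burch gives $F = R^{g-1}$, $G = R^g$ (with appropriate degree shifts) for some $g \ge 2$, a $g \times (g-1)$ matrix $M = (m_{ij})$ representing $\varphi : F \to G$, and generators $f_1, \dots, f_g$ of $I$ obtained as signed maximal minors of $M$. Choosing bases $\{e_1,\dots,e_{g-1}\}$ of $F$ and $\{y_1,\dots,y_g\}$ of $G$, I would define the maps of the proposed complex by
\[
y_i y_j \longmapsto f_i f_j, \qquad e_k \otimes y_j \longmapsto \varphi(e_k)\cdot y_j = \textstyle\sum_i m_{ik}\, y_i y_j,
\]
\[
e_k \wedge e_l \longmapsto \textstyle\sum_j \bigl(m_{jl}\, e_k \otimes y_j - m_{jk}\, e_l \otimes y_j\bigr).
\]
Checking that this is a complex is a direct computation: the relation $\sum_i m_{ik} f_i = 0$ (encoded by $\varphi$) gives vanishing of the composition $F \otimes G \to I^2$, and antisymmetry in $(k,l)$ kills the composition $\bigwedge^2 F \to \mathrm{Sym}^2 G$.

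Second, I would check ranks for a sanity check: the alternating sum of ranks is
\[
\binom{g+1}{2} - (g-1)g + \binom{g-1}{2} = 1,
\]
consistent with the generic rank of $I^2 \subset R$. Minimality is then automatic: minimality of the original resolution forces the entries of $M$ to lie in the irrelevant maximal ideal $\mathfrak{m}$, and the entries of the maps above are either entries of $M$ or products $f_i f_j$, all of which lie in $\mathfrak{m}$.

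The main obstacle is acyclicity, and this is where the generic complete intersection hypothesis is essential. The complex above is precisely the second graded strand of the symmetric-algebra complex associated to the resolution $0 \to F \to G \to I \to 0$; it always resolves the symmetric square $\mathrm{Sym}^2 I$, so the real content is the identification $\mathrm{Sym}^2 I = I^2$. The kernel of $\mathrm{Sym}^2 I \twoheadrightarrow I^2$ is supported only on the primes where $I$ fails to be of linear type, and for a generic complete intersection of codimension two one shows by localization that $I_P$ is a complete intersection at every $P \in \mathrm{Ass}(I)$, forcing $\mathrm{Sym}^2 I \cong I^2$. To prove acyclicity, I would then apply Buchsbaum--Eisenbud: verify that the maps have the correct ranks and that the ideals of appropriately sized minors have the correct depths, the latter following from the fact that the Fitting ideals of $\varphi$ coincide (up to radical) with $I$ itself at the relevant levels, combined with the codimension-two and generic complete intersection hypotheses. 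Alternatively, one can invoke the known acyclicity of the second approximation complex $\mathcal{M}_\bullet$ of Herzog--Simis--Vasconcelos in this setting, which packages exactly this argument. Either way, acyclicity plus the identification $\mathrm{Sym}^2 I = I^2$ completes the proof.
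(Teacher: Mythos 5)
The paper offers no proof of this theorem: it is quoted from \cite{Cetal,SV,Wey}, and the remark immediately following it records precisely the two-step strategy you adopt --- Weyman's complex resolves $\mathrm{Sym}^2(I)$ for a module of projective dimension one, and the hypotheses force $\mathrm{Sym}^2(I)\cong I^2$. Your explicit construction of the maps, the verification that they form a complex, the rank count, and the minimality argument are all correct, so the skeleton of your argument matches the intended one.

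There is, however, a genuine gap in the step $\mathrm{Sym}^2(I)\cong I^2$. You argue that the kernel $K$ of the natural surjection $\mathrm{Sym}^2(I)\twoheadrightarrow I^2$ is supported where $I$ fails to be of linear type, and that $I_P$ is a complete intersection for every $P\in\mathrm{Ass}(R/I)$. This only handles primes of height at most two: at a prime $P$ of height $\geq 3$ containing $I$, the generic complete intersection hypothesis says nothing (it only controls the minimal primes of $I$), so you cannot conclude $K_P=0$ there directly. The missing ingredient is a bound on the associated primes of $K$. Since $K\subseteq\mathrm{Sym}^2(I)$ and Weyman's complex shows $\operatorname{pd}\mathrm{Sym}^2(I)\leq 2$, Auslander--Buchsbaum gives $\operatorname{depth}\mathrm{Sym}^2(I)\geq\dim R-2$, hence every associated prime of $\mathrm{Sym}^2(I)$, and so of $K$, has height at most two. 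Such a prime either does not contain $I$ (where $\mathrm{Sym}^2(I)_P=R_P=(I^2)_P$) or is minimal over $I$ (where the generic complete intersection hypothesis makes $I_P$ of linear type). This is where perfection and codimension two enter a second time, and it is exactly the content of \cite{SV} and \cite[Corollary 2.6]{Cetal}. Your alternative route via Buchsbaum--Eisenbud is also not carried out: the ideals of minors of the maps $\bigwedge^2 F\to F\otimes G$ and $F\otimes G\to\mathrm{Sym}^2 G$ are not Fitting ideals of $\varphi$ in any immediate sense, and estimating their depths is precisely the hard part you would still need to supply.
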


  \begin{rem}
    Weyman's paper \cite{Wey} gives the resolution of
    ${\rm Sym}^2(I)$.  As shown in \cite{Cetal,SV}, the hypotheses on
    $I$ imply that ${\rm Sym}^2(I) \cong I^2$.
  \end{rem}

Many of our arguments make use of Hilbert functions.  The {\it Hilbert
  function} of $R/I$, denoted $\H_{R/I}$, is the numerical function
$\H_{R/I}:\mathbb{N} \rightarrow \mathbb{N}$ defined by
\[\H_{R/I}(i) := \dim_\k R_i - \dim_\k I_i\]
where $R_i$, respectively $I_i$, denotes the $i$-th graded component
of $R$, respectively $I$.

Our primary focus is to understand $\sdef(I,m)$ when $I$
defines either a star configuration or a set of points in
$\mathbb{P}^2$.  In the next section, we introduce star configurations
in more detail.  For now, we review the relevant background about sets
of points in $\mathbb{P}^2$.

Let $\X = \{P_1,\ldots,P_s\}$ be a set of distinct points in
$\mathbb{P}^2$.  If $I_{P_i}$ is the ideal associated to $P_i$ in
$R = \k[x_0,x_1,x_2]$, then the homogeneous ideal associated to $\X$
is the ideal $I_\X = I_{P_1} \cap \cdots \cap I_{P_s}$.
  The next lemma allows us to describe $I_\X^{(m)}$; although this
  result is well-known, we have included a proof for completeness.

\begin{lem} Let $\X = \{P_1,\ldots,P_s\} \subseteq \mathbb{P}^2$ be a
  set of $s$ distinct points with associated ideal
  $I_\X = I_{P_1} \cap \cdots \cap I_{P_s}$.  Then for all $m \geq 1$,
  $I^{(m)}_\X = I^m_{P_1} \cap \cdots \cap I^m_{P_s}$.
\end{lem}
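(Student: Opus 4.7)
The plan is to unwind the definition of symbolic power and reduce everything to the corresponding statement for a single point, which is a complete intersection.

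First I would identify $\mathrm{Ass}(I_\X)$. Since each $I_{P_i}$ is a homogeneous prime and the points $P_i$ are distinct, $I_\X = I_{P_1} \cap \cdots \cap I_{P_s}$ is an irredundant primary decomposition, so $\mathrm{Ass}(I_\X) = \{I_{P_1},\ldots,I_{P_s}\}$. Next I would fix an index $i$, set $P = I_{P_i}$, and localize at $P$. For every $j \neq i$, the primes $I_{P_j}$ and $I_{P_i}$ are distinct maximal (non-irrelevant) primes, so $I_{P_j} \not\subseteq I_{P_i}$ and hence $I_{P_j} R_P = R_P$. Because localization commutes with finite intersections and with taking powers, this yields
$$I_\X^m R_P \;=\; \Bigl(\bigcap_{j=1}^s I_{P_j}\Bigr)^m R_P \;=\; (I_{P_i} R_P)^m \;=\; I_{P_i}^m R_P.$$

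The last step is to identify $I_{P_i}^m R_P \cap R$ with $I_{P_i}^m$. Since $I_{P_i}$ is the defining ideal of a point in $\mathbb{P}^2$, it is generated by two linear forms and is therefore a complete intersection of height two. By Lemma~\ref{sdefectlemma}(ii), $I_{P_i}^{(m)} = I_{P_i}^m$, and because $I_{P_i}$ is itself the unique associated prime of $I_{P_i}^m$ (so the symbolic power definition reduces to a single localization), we get $I_{P_i}^m R_P \cap R = I_{P_i}^{(m)} = I_{P_i}^m$. Intersecting over $i$ now gives
$$I_\X^{(m)} \;=\; \bigcap_{i=1}^s \bigl(I_\X^m R_{P_i} \cap R\bigr) \;=\; \bigcap_{i=1}^s \bigl(I_{P_i}^m R_{P_i} \cap R\bigr) \;=\; \bigcap_{i=1}^s I_{P_i}^m.$$

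There is no real obstacle here: the result is essentially a bookkeeping exercise with the definition. The only point that requires care is the final identification $I_{P_i}^m R_{P_i} \cap R = I_{P_i}^m$; if one wants to avoid citing Lemma~\ref{sdefectlemma}(ii), one can argue directly that the power of a height-two complete intersection is unmixed, so its only associated prime is $I_{P_i}$ and the localization-intersection operation returns the ideal itself.
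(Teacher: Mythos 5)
Your proof is correct and follows essentially the same route as the paper: identify the associated primes, localize at each $I_{P_i}$ to see $I_\X^m R_{I_{P_i}} = I_{P_i}^m R_{I_{P_i}}$, and contract back. The only difference is that you justify the final contraction $I_{P_i}^m R_{I_{P_i}} \cap R = I_{P_i}^m$ explicitly (via the complete intersection property of $I_{P_i}$ and unmixedness of its powers), whereas the paper simply asserts it; this is a welcome addition of detail rather than a different argument.
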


\begin{proof}
  The associated primes of $I_\X$ are the ideals $I_{P_i}$ with
  $i=1,\ldots,s$.  Because localization commutes with products, we
  have
  \[I^m_\X R_{I_{P_i}} = (I_\X R_{I_{P_i}})^m = (I_{P_i}R_{P_i})^m =
    I_{P_i}^mR_{P_i}.\] Note that the second equality follows from the
  fact that $I_{P_i}$ is the only associated prime of $I_\X$ contained
  in $I_{P_i}$.  Since $I_{P_i}^mR_{P_i} \cap R = I_{P_i}^m$, the
  result follows.
\end{proof}

For sets of points in $\mathbb{P}^2$, the symbolic defect sequence
will either be all zeroes, or all values of the sequence, except the
first, will be nonzero.  Moreover, we can completely classify when the
symbolic defect sequence is all zeroes.

\begin{thm}\label{completeintersection}
  Let $\X \subseteq \mathbb{P}^2$ be any set of points.  Then the
  following are equivalent:
  \begin{enumerate}
  \item[$(i)$] $I_\X$ is a complete intersection.
  \item[$(ii)$] $\sdef(I_\X,m) = 0$ for all $m \geq 1$.
  \item[$(iii)$] $\sdef(I_\X,m) = 0$ for some $m \geq 2$.
  \end{enumerate}
\end{thm}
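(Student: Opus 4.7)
The implications $(i)\Rightarrow(ii)$ and $(ii)\Rightarrow(iii)$ are immediate: the first follows from Lemma \ref{sdefectlemma}$(ii)$, and the second is trivial. I will focus on the implication $(iii)\Rightarrow(i)$, which I plan to prove via its contrapositive, by showing that whenever $I_\X$ is not a complete intersection, $R/I_\X^m$ fails to be Cohen--Macaulay for every $m \geq 2$.

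My first step is to reduce $(iii)\Rightarrow(i)$ to a Cohen--Macaulayness question. Suppose $I_\X^m = I_\X^{(m)}$ for some $m \geq 2$. The associated primes of $I_\X^{(m)}$ are exactly the minimal primes $I_{P_1},\dots,I_{P_s}$ of $I_\X$, each of height $2$. Hence $I_\X^m$ is unmixed, so $R/I_\X^m$ has no embedded primes; since $\dim R/I_\X^m = 1$, this forces ${\rm depth}(R/I_\X^m) = 1$, and $R/I_\X^m$ is Cohen--Macaulay. It therefore suffices to show that if $R/I_\X^m$ is Cohen--Macaulay for some $m \geq 2$, then $I_\X$ is a complete intersection.

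For the case $m=2$ I will invoke Theorem \ref{resix2} directly. Writing $0 \to F \to G \to I_\X \to 0$ for the minimal free resolution of $I_\X$, the theorem yields
\[
0 \longrightarrow \bigwedge^2 F \longrightarrow F \otimes G \longrightarrow {\rm Sym}^2 G \longrightarrow I_\X^2 \longrightarrow 0
\]
as the minimal free resolution of $I_\X^2$. If $I_\X$ is not a complete intersection then $\mu(I_\X) \geq 3$, so ${\rm rank}(F) \geq 2$ and $\bigwedge^2 F \neq 0$. This forces ${\rm pd}(R/I_\X^2) = 3$, which contradicts Cohen--Macaulayness (by Auslander--Buchsbaum we would need ${\rm pd}(R/I_\X^2) = {\rm codim}(I_\X^2) = 2$). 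So $\mu(I_\X) = 2$, and $I_\X$ is a complete intersection.

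For $m \geq 3$, I expect the same strategy to work via a higher-power analogue of Theorem \ref{resix2} (for instance, the Akin--Buchsbaum--Weyman complex resolving ${\rm Sym}^m G \twoheadrightarrow I_\X^m$ in the codimension-$2$ perfect, generically complete intersection setting), whose length exceeds $2$ whenever $\mu(I_\X) \geq 3$ and $m \geq 2$, again contradicting ${\rm pd}(R/I_\X^m) = 2$. This is the step I expect to be the main obstacle: one must either import such a general resolution, or else reduce to $m=2$ using the colon identity $I_\X^{(m)} : I_\X = I_\X^{(m-1)}$ (easily verified by localizing at each $I_{P_i}$, using that $I_{P_i}$ is a local complete intersection) together with the analogous ordinary-power identity $I_\X^m : I_\X = I_\X^{m-1}$ for codimension-$2$ perfect ideals, which would then propagate Cohen--Macaulayness down from $I_\X^m$ to $I_\X^2$.
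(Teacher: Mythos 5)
Your reduction of $(iii)\Rightarrow(i)$ to Cohen--Macaulayness is correct (if $I_\X^m=I_\X^{(m)}$ then $\mathfrak{m}\notin\operatorname{Ass}(R/I_\X^m)$, so $\operatorname{depth}=1=\dim$), and your $m=2$ argument is complete and valid: Theorem \ref{resix2} applies because $I_\X$ is perfect, codimension two, and a generic complete intersection (as the paper verifies before Lemma \ref{nummingens}), Hilbert--Burch gives $\operatorname{rank}F=\mu(I_\X)-1\geq 2$ when $I_\X$ is not a complete intersection, and minimality of the resolution then forces $\operatorname{pd}(R/I_\X^2)=3$, i.e.\ depth $0$. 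This is genuinely different from the paper, whose entire proof of $(iii)\Rightarrow(i)$ is a citation (to \cite[Remark 2.12(i)]{Cetal}, \cite[Theorem 2.8]{HU}, \cite[Corollary 2.5]{Huneke}); your version makes the $m=2$ case self-contained from results already in the paper.

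However, statement $(iii)$ says ``for \emph{some} $m\geq 2$,'' so the case $m\geq 3$ is essential, and there your proof has a genuine gap that neither of your proposed routes closes. Route 1: the Akin--Buchsbaum--Weyman complexes resolve $\operatorname{Sym}^m(I_\X)$, and for $m\geq 3$ the natural surjection $\operatorname{Sym}^m(I_\X)\to I_\X^m$ need not be an isomorphism (the paper's remark after Theorem \ref{resix2} only asserts $\operatorname{Sym}^2(I)\cong I^2$ under these hypotheses; linear type for all powers would require conditions such as $\mu((I_\X)_P)\leq\dim R_P$ at every prime, which fails at $\mathfrak{m}$ once $\mu(I_\X)\geq 4$). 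So you cannot read off $\operatorname{pd}(I_\X^m)$ from those complexes. Route 2: the identity $I_\X^{(m)}:I_\X=I_\X^{(m-1)}$ is fine, but the companion identity $I_\X^m:I_\X=I_\X^{m-1}$ is exactly the crux. Indeed, under your standing hypothesis $I_\X^m=I_\X^{(m)}$ one has $I_\X^m:I_\X=I_\X^{(m)}:I_\X=I_\X^{(m-1)}$, so asserting $I_\X^m:I_\X=I_\X^{m-1}$ is literally asserting $I_\X^{(m-1)}=I_\X^{m-1}$, the induction step itself; it is not a known general fact for codimension-two perfect ideals and you give no proof of it. To close the gap you would need to import the linkage-theoretic result the paper cites (Huneke--Ulrich: a licci, generically complete intersection ideal with $I^n$ unmixed for some $n\geq 2$ is a complete intersection) or an equivalent.
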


\begin{proof}
  Lemma \ref{sdefectlemma} shows $(i) \Rightarrow (ii)$, and
  $(ii) \Rightarrow (iii)$ is immediate.  For $(iii) \Rightarrow (i)$,
  it was noted in \cite[Remark 2.12(i)]{Cetal} that when $\X$ is not a
  complete intersection of points in $\mathbb{P}^2$, then
  $I_\X^m \neq I_\X^{(m)}$ for all $m \geq 2$.  This also
    follows from \cite[Theorem 2.8]{HU} or \cite[Corollary
    2.5]{Huneke}.
\end{proof}


\section{Symbolic squares of star configurations}
\label{sec:symbolic-square-star}

In this section, we will consider $\sdef(I,2)$ when $I$ defines a star
configuration.  In fact, we prove a stronger result by finding an
ideal $J$ such that $I^{(2)} = J + I^2$.  It is interesting to note
that the ideal $J$ will also be a star configuration.  For
completeness, we begin with the relevant background on star
configurations.

\begin{defn}
  Let $n$, $c$ and $s$ be positive integers with
  $1\leqslant c\leqslant \min\{n,s\}$. Let
  $\mathcal{F} = \{F_1,\ldots,F_s\}$ be a set of forms in
  $R=\k[x_0,x_1,\ldots,x_n]$ with the property that all subsets of
  $\mathcal{F}$ of cardinality $c+1$ are regular sequences in
  $R$. Define an ideal of $R$ by setting
  \begin{equation*}
    I_{c,\mathcal{F}} = \bigcap_{1\leqslant i_1<\ldots<i_{c}\leqslant s}
    \langle F_{i_1},\ldots,F_{i_c} \rangle.
  \end{equation*}
  The vanishing locus of $I_{c,\mathcal{F}}$ in $\mathbb{P}^n$ is
  called a \emph{star configuration}.

  When the forms $F_1,\ldots,F_s$ are all linear, 
we will typically use $L_i$ in place of $F_i$ and write 
  $\mathcal{L}=\{L_1,\ldots,L_s\}$ in place of
  $\mathcal{F} = \{F_1,\ldots,F_s\}$, and we will call the vanishing
  locus of $I_{c,\mathcal{L}}$ a \emph{linear star configuration}.
\end{defn}

\begin{rem}
  A.V.~Geramita is attributed with first coining the term star
  configuration to describe the variety defined by
  $I_{c,\mathcal{F}}$.  The name is inspired by the fact that when
  $n=c=2$, and $s=5$, the placement of the five lines
  $\mathcal{L} = \{L_1,\ldots,L_5\}$ that define a linear star
  configuration resembles a star.  In this case, the locus of
  $I_{c,\mathcal{L}}$ is a set of 10 points corresponding to the
  intersections between these lines.  It should be noted that linear
  star configurations were classically called $l$-laterals (e.g.  see
  \cite{Do}).  On the other hand, our more general definition follows
  \cite{GHMN}, where the geometric objects are called hypersurface
  configurations.  This more general definition of star configurations
  evolved through a series of papers (see \cite{AS:1,PS:1,GHMN}); in
  particular, the codimension 2 case was studied before the general
  case.  Star configurations have been shown to have many nice
  algebraic properties, but at the same time, can be used to exhibit
  extremal properties.  The references \cite{BH,BH2,CGVT, GHM,GMS:1}
  form a small sample of papers that have studied the ideals
  $I_{c,\mathcal{F}}$.
\end{rem}
\begin{rem}
  Geometrically, the vanishing locus in $\mathbb{P}^n$ of the ideal
  $\langle F_{i_1},\ldots,F_{i_c} \rangle$ is a complete intersection
  of codimension $c$ obtained by intersecting the hypersurfaces
  defined by the forms $F_{i_1},\ldots,F_{i_c}$. A star configuration
  is then a union of such complete intersections.
\end{rem}

\begin{rem}
  While the definition of a star configuration makes sense for
  $s<n+1$, such cases are less interesting (cf. \cite[Remark
  2.2]{GHM}). Therefore we will always assume that $s\geqslant n+1$.
\end{rem}

\begin{thm}
  \label{pro:star-generators}
  Let $I_{c,\mathcal{F}}$ be the defining ideal of a star
  configuration in $\mathbb{P}^n$, with
  $\mathcal{F}=\{F_1,\ldots,F_s\}$. Then
  \begin{equation*}
    \{F_{i_1} \cdots F_{i_{s-c+1}} \mid
    1 \leqslant i_1 < \ldots < i_{s-c+1} \leqslant s\}
  \end{equation*}
  is a minimal generating set of $I_{c,\mathcal{F}}$.
\end{thm}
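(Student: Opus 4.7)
The plan is to prove three things: that each claimed generator lies in $I_{c,\mathcal{F}}$, that they collectively generate $I_{c,\mathcal{F}}$, and that no one of them is redundant. For the first, I would fix a product $F_{i_1}\cdots F_{i_{s-c+1}}$ and any $c$-subset $\{j_1,\ldots,j_c\}$ indexing a term in the intersection; since $(s-c+1)+c=s+1>s$, pigeonhole forces the two index sets to meet, and a common index $i_k$ places $F_{i_k}$ simultaneously as a factor of the product and as an element of $\langle F_{j_1},\ldots,F_{j_c}\rangle$. Hence the product lies in every term of the intersection, and so in $I_{c,\mathcal{F}}$.

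For the reverse containment I would induct on $s$. The base case $s=c$ is immediate ($I_{c,\mathcal{F}}=\langle F_1,\ldots,F_c\rangle$, with the singleton products recovering the $F_i$); the case $c=1$ is similarly direct (the regular sequence hypothesis on pairs makes the $F_i$ pairwise coprime, so $I_{1,\mathcal{F}}=\langle F_1\cdots F_s\rangle$). For $s>c\geq 2$, set $\mathcal{F}'=\mathcal{F}\setminus\{F_s\}$ and split
\[
I_{c,\mathcal{F}}=I_{c,\mathcal{F}'}\cap K,\qquad K=\bigcap_{\substack{|T|=c-1\\T\subseteq\{1,\ldots,s-1\}}}\langle F_s,\,F_t:t\in T\rangle.
\]
I would then verify $K=\langle F_s\rangle+I_{c-1,\mathcal{F}'}$ by reducing modulo $F_s$ and applying the inductive hypothesis to $(s-1,c-1)$ in $R/\langle F_s\rangle$ (the required regular sequence condition on $\overline{\mathcal{F}'}$ is inherited because $(c+1)$-subsets of $\mathcal{F}$ containing $F_s$ are regular sequences in $R$). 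Next I would verify
\[
I_{c,\mathcal{F}'}\cap\bigl(\langle F_s\rangle+I_{c-1,\mathcal{F}'}\bigr)=F_s\cdot I_{c,\mathcal{F}'}+I_{c-1,\mathcal{F}'},
\]
using that $F_s$ is a non-zero-divisor modulo $I_{c,\mathcal{F}'}$ (no associated prime of $I_{c,\mathcal{F}'}$ can contain $F_s$, again by the regular sequence hypothesis on $(c+1)$-subsets). Applying the inductive hypothesis to $I_{c,\mathcal{F}'}$ and $I_{c-1,\mathcal{F}'}$ then exhibits every generator of $I_{c,\mathcal{F}}$ as a product of $s-c+1$ elements of $\mathcal{F}$: those containing $F_s$ arise from $F_s\cdot(\text{generators of }I_{c,\mathcal{F}'})$, and those that do not are the generators of $I_{c-1,\mathcal{F}'}$.

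For minimality, I would show that no $F_{G_0}=\prod_{i\in G_0}F_i$ (with $|G_0|=s-c+1$) lies in the ideal generated by the other products $F_G$ with $G\neq G_0$. Reducing modulo $\langle F_j:j\in G_0^c\rangle$ (a regular sequence of length $c-1$), for any $G\neq G_0$ of size $s-c+1$ a cardinality count gives $G\cap G_0^c\neq\emptyset$, so $\overline{F_G}=0$ in the quotient. Meanwhile $\overline{F_{G_0}}$ is a product of non-zero-divisors—each $\bar F_i$ with $i\in G_0$ is a non-zero-divisor modulo $\langle F_j:j\in G_0^c\rangle$, since adjoining $F_i$ yields a $c$-subset of $\mathcal{F}$ and hence a regular sequence of length $c$—so $\overline{F_{G_0}}\neq 0$, contradicting $F_{G_0}\in\langle F_G:G\neq G_0\rangle$. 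The main obstacle I anticipate is establishing the two ideal identities in the inductive step, both of which hinge on carefully extracting non-zero-divisor properties from the regular sequence hypothesis on $(c+1)$-subsets of $\mathcal{F}$.
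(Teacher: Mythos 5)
Your proposal is essentially correct, but it takes a genuinely different route from the paper: the paper does not prove this statement at all, instead citing \cite[Theorem 2.3, Corollary 3.5]{PS:1} for generation and minimality (and \cite[Proposition 2.3 (4)]{GHMN}, where the result is obtained by reducing to the ``generic''/monomial case via specialization). Your argument is a self-contained double induction on $(s,c)$ built on the decomposition $I_{c,\mathcal{F}}=I_{c,\mathcal{F}'}\cap K$ according to whether a $c$-subset contains $F_s$, together with the two ideal identities $K=\langle F_s\rangle+I_{c-1,\mathcal{F}'}$ and $I_{c,\mathcal{F}'}\cap(\langle F_s\rangle+I_{c-1,\mathcal{F}'})=F_s I_{c,\mathcal{F}'}+I_{c-1,\mathcal{F}'}$; your minimality argument (kill the complementary $F_j$'s and observe that $\overline{F_{G_0}}$ is a product of non-zero-divisors) is clean and correct, and all the non-zero-divisor claims do follow from the height characterization of regular sequences in a Cohen--Macaulay ring plus unmixedness of the complete intersections $\langle F_{j_1},\ldots,F_{j_c}\rangle$. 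What your approach buys is independence from the external references and from the specialization machinery of \cite{GHMN}; what it costs is one technical point you should make explicit: the identity $K=\langle F_s\rangle+I_{c-1,\mathcal{F}'}$ is proved by invoking the inductive hypothesis for $(s-1,c-1)$ inside $R/\langle F_s\rangle$, which is not a polynomial ring, so the statement being inducted on must be formulated for star-configuration-type intersections in an arbitrary graded Cohen--Macaulay ring (where ``regular sequence'' is still detected by height). Since every step of your generation argument uses only that generality, this is a formulation issue rather than a gap, but as written the induction hypothesis does not literally apply to the quotient ring. You should also note that the standing assumption $s\geq n+1>c$ guarantees $s\geq c+1$, which is what lets you extend $(c-1)$- and $c$-subsets to $(c+1)$-subsets when extracting regular sequences; for $s=c$ the hypothesis on $(c+1)$-subsets is vacuous and minimality can genuinely fail.
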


\begin{proof}
  See \cite[Theorem 2.3]{PS:1} for generation (see also
    \cite[Proposition 2.3 (4)]{GHMN}) and \cite[Corollary 3.5]{PS:1}
  for minimality.
\end{proof}

We will make use of the following decomposition of the $m$-th symbolic
power; this follows from \cite[Theorem 3.6 (i)]{GHMN}.

\begin{thm}
  \label{pro:symbolic-powers-star}
  Let $I_{c,\mathcal{F}}$ be the defining ideal of a star
  configuration in $\mathbb{P}^n$, with
  $\mathcal{F}=\{F_1,\ldots,F_s\}$. For all $m\geqslant 1$, we have
  \begin{equation*}
    I_{c,\mathcal{F}}^{(m)} = \bigcap_{1\leqslant i_1<\ldots<i_{c}\leqslant s}
    \langle F_{i_1},\ldots,F_{i_c} \rangle^m.
  \end{equation*}
\end{thm}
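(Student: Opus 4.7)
The plan is to localize at associated primes and reduce to Lemma~\ref{sdefectlemma}$(ii)$, exploiting the complete-intersection property of each $Q_I := \langle F_{i_1}, \ldots, F_{i_c}\rangle$.

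First I would pin down the associated primes of $I_{c,\mathcal{F}}$. Each $Q_I$ is a complete intersection of codimension $c$, hence unmixed with associated primes all of codimension $c$. Invoking the structural results for star configurations (see \cite[Proposition 2.3]{GHMN}), $I_{c,\mathcal{F}}$ is itself arithmetically Cohen--Macaulay, hence unmixed of codimension $c$. So any associated prime $P$ of $I_{c,\mathcal{F}}$ must contain $\bigcap_I Q_I$, hence contain $Q_I$ for some $I$; comparing heights forces $P$ to be a minimal prime of that $Q_I$.

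Next comes a separation step: if $P$ is a minimal prime of $Q_I$ and $I' \neq I$, then $Q_{I'} \not\subseteq P$. Pick $j \in I' \setminus I$; by hypothesis $F_{i_1}, \ldots, F_{i_c}, F_j$ is a regular sequence, so $F_j$ is a nonzerodivisor on $R/Q_I$ and therefore lies in no associated prime of $Q_I$, so in particular $F_j \notin P$. This simultaneously shows that the sets of minimal primes of the $Q_I$ (for varying $I$) are pairwise disjoint, so $\mathrm{Ass}(I_{c,\mathcal{F}})$ partitions cleanly according to the index $I$.

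With these ingredients the computation is routine. For any $P \in \mathrm{Ass}(I_{c,\mathcal{F}})$, let $I$ be the unique index with $P$ a minimal prime of $Q_I$; separation then gives $Q_{I'} R_P = R_P$ for every $I' \neq I$, whence $I_{c,\mathcal{F}} R_P = Q_I R_P$ and $I_{c,\mathcal{F}}^m R_P = Q_I^m R_P$. Substituting into the definition of the symbolic power and regrouping by $I$,
\begin{equation*}
  I_{c,\mathcal{F}}^{(m)} = \bigcap_{P \in \mathrm{Ass}(I_{c,\mathcal{F}})} (I_{c,\mathcal{F}}^m R_P \cap R) = \bigcap_{I} \bigcap_{P} (Q_I^m R_P \cap R),
\end{equation*}
where the inner intersection runs over the minimal primes $P$ of $Q_I$ and therefore computes $Q_I^{(m)}$, which equals $Q_I^m$ by Lemma~\ref{sdefectlemma}$(ii)$. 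The main obstacle is Step~1 --- ruling out embedded associated primes of $I_{c,\mathcal{F}}$, since without this the localization at $\mathrm{Ass}(I_{c,\mathcal{F}})$ may fail to recover all the primary components of $I_{c,\mathcal{F}}^m$ we need --- for which I would appeal to the arithmetic Cohen--Macaulayness proved in \cite[Proposition 2.3]{GHMN} rather than reprove it from scratch.
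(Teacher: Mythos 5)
Your argument is correct, but it is genuinely different from what the paper does: the paper offers no direct proof at all, simply citing \cite[Theorem 3.6 (i)]{GHMN}, whose proof runs through the monomial/matroid specialization machinery (the map $\varphi\colon \k[y_1,\ldots,y_s]\to R$, $y_i\mapsto F_i$, recalled later as Theorem \ref{thm:GHMN-main}) — one first proves the decomposition when the $F_i$ are coordinates and then transfers it along $\varphi$. Your proof instead works directly with the definition of symbolic powers via localization: the key observations are that $\mathrm{Ass}(I_{c,\mathcal{F}})$ is exactly the disjoint union of the sets $\mathrm{Min}(Q_I)$ (your separation step, using that any $c+1$ of the $F_i$ form a regular sequence, is precisely what makes the decomposition ``clean''), that $I_{c,\mathcal{F}}R_P=Q_IR_P$ at such a prime, and that $Q_I^{(m)}=Q_I^m$ because powers of complete intersections have no embedded primes. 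This is more elementary and self-contained, at the cost of not setting up the specialization technique that the paper needs anyway for the rest of Section 3.

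Two small points to tighten. First, your regrouping $\bigcap_{P\in\mathrm{Ass}(I_{c,\mathcal{F}})}=\bigcap_I\bigcap_{P\in\mathrm{Min}(Q_I)}$ needs the containment $\mathrm{Min}(Q_I)\subseteq\mathrm{Ass}(I_{c,\mathcal{F}})$ as well as the reverse; this does follow from your separation step together with the height count (no minimal prime of $Q_I$ can properly contain, or coincide with, a minimal prime of a different $Q_{I'}$), but it should be said explicitly, since omitting some $P\in\mathrm{Min}(Q_I)$ from the intersection would only give a containment $\supseteq$ rather than equality. Second, Lemma \ref{sdefectlemma}$(ii)$ is stated for \emph{radical} ideals, and the $Q_I=\langle F_{i_1},\ldots,F_{i_c}\rangle$ need not be radical when the $F_i$ are not linear; you should instead invoke the underlying Zariski--Samuel lemma \cite[Appendix 6, Lemma 5]{ZS} directly, which gives $Q_I^{(m)}=Q_I^m$ for any ideal generated by a regular sequence. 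Neither issue affects the correctness of the overall argument.
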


We will first consider the case of a linear star configuration
$I_{c,\mathcal{L}}$ in $\mathbb{P}^n$, with
$\mathcal{L}=\{L_1,\ldots,L_s\}$ when $|\mathcal{L}|=n+1$.  In this
context, we can reduce to the case of monomial ideals.  Then,
following \cite{GHMN}, we will apply our results to obtain
corresponding statements for arbitrary star configurations.

\subsection{The monomial case}
\label{sec:monomial-case}

Let $I_{c,\mathcal{L}}$ be the defining ideal of a linear star
configuration in $\mathbb{P}^n$, with
$\mathcal{L}=\{L_1,\ldots,L_s\}$. Suppose that
$|\mathcal{L}|=n+1$. Then, up to a change of variables, we may assume
that the hyperplanes forming the star configuration are defined by the
coordinate functions $x_0,x_1,\ldots,x_n$. By Theorem
\ref{pro:symbolic-powers-star}, we have
\begin{equation*}
  I^{(m)}_{c,\mathcal{L}} = \bigcap_{0\leqslant i_1 < \ldots < i_c \leqslant n}
  \langle x_{i_1},\ldots,x_{i_c}\rangle^m.
\end{equation*}
Clearly, $I_{c,\mathcal{L}}$ and its symbolic powers are monomial
ideals. A monomial $p = x_0^{a_0} x_1^{a_1} \cdots x_n^{a_n}$ belongs
to $I^{(m)}_{c,\mathcal{L}}$ if and only if it satisfies the condition
\begin{equation}
  \label{eq:1}
    a_{i_1} + a_{i_2} + \cdots + a_{i_c} \geqslant m \text{ for all } 
0\leqslant i_1 < \cdots < i_c \leqslant n.
\end{equation}
Let $\operatorname{Supp} (p)$ denote the support of $p$, i.e.,
$\operatorname{Supp} (p) = \{ x_i \mid x_i \text{ divides } p\}$.

We are now able to describe an ideal $M$ with the property that
$I^{(m)}_{c,\mathcal{L}} = I^m_{c,\mathcal{L}} + M$.

\begin{thm}
  \label{thm:symbolic-monomial}
  Let $\mathcal{L} = \{x_0,\ldots,x_n\}$.
  Then $I^{(m)}_{c,\mathcal{L}} = I^m_{c,\mathcal{L}} + M$, where $M$
  is the ideal generated by all monomials satisfying equation
  \eqref{eq:1} whose support has cardinality at least $n-c+3$.
\end{thm}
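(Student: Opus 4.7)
The plan is to establish the two inclusions $I^{(m)}_{c,\mathcal{L}} \supseteq I^m_{c,\mathcal{L}} + M$ and $I^{(m)}_{c,\mathcal{L}} \subseteq I^m_{c,\mathcal{L}} + M$ separately. The first is essentially immediate: $I^m \subseteq I^{(m)}$ always holds, while by construction every monomial generator of $M$ satisfies equation \eqref{eq:1} and hence lies in $I^{(m)}_{c,\mathcal{L}}$ by the criterion preceding the theorem.

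For the harder direction, I would first observe that $I^{(m)}_{c,\mathcal{L}}$, being an intersection of monomial ideals by Theorem \ref{pro:symbolic-powers-star}, is itself a monomial ideal. So it suffices to show that every monomial $p = x_0^{a_0} \cdots x_n^{a_n}$ satisfying \eqref{eq:1} lies in $I^m_{c,\mathcal{L}} + M$. Set $k := |\operatorname{Supp}(p)|$. If $k \geqslant n - c + 3$, then $p \in M$ by definition, so I may assume $k \leqslant n-c+2$.

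The remaining case is handled by a counting argument on the complement of the support. If $k \leqslant n-c+1$, then $n+1-k \geqslant c$, so one can choose $c$ indices outside $\operatorname{Supp}(p)$; the sum of the corresponding exponents is $0 < m$, contradicting \eqref{eq:1}. Hence $k = n-c+2$. Writing $\operatorname{Supp}(p) = \{x_{j_1},\ldots,x_{j_{n-c+2}}\}$, the complement of the support has exactly $c-1$ variables, and combining these with any single $x_{j_l}$ yields a set of $c$ indices. Applying \eqref{eq:1} to this set forces $a_{j_l} \geqslant m$ for every $l$. Therefore $p$ is divisible by $(x_{j_1}\cdots x_{j_{n-c+2}})^m$. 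By Theorem \ref{pro:star-generators}, with $s=n+1$ and $s-c+1 = n-c+2$, the factor $x_{j_1}\cdots x_{j_{n-c+2}}$ is one of the minimal generators of $I_{c,\mathcal{L}}$, so $p \in I^m_{c,\mathcal{L}}$, as required.

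The argument is mostly bookkeeping around the support size, and no single step poses a serious obstacle. The real content is the explicit description of $I^{(m)}_{c,\mathcal{L}}$ via condition \eqref{eq:1}, which is inherited from Theorem \ref{pro:symbolic-powers-star} after the reduction to coordinate hyperplanes; once that is in hand, the threshold $n-c+3$ is exactly the cardinality below which the symbolic condition forces divisibility by a pure $m$-th power of a squarefree generator of $I_{c,\mathcal{L}}$.
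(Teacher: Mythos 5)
Your proof is correct and follows essentially the same route as the paper's: the easy inclusion from the definitions, then a case analysis on $|\operatorname{Supp}(p)|$, with the key case $|\operatorname{Supp}(p)| = n-c+2$ forcing every exponent in the support to be at least $m$ and hence $p$ to be a multiple of the $m$-th power of a squarefree generator of $I_{c,\mathcal{L}}$. The only cosmetic difference is that you rule out $|\operatorname{Supp}(p)| \leqslant n-c+1$ directly from condition \eqref{eq:1}, whereas the paper deduces the same bound by citing Theorem \ref{pro:star-generators}; both are fine.
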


\begin{proof}
  Clearly $I^{(m)}_{c,\mathcal{L}} \supseteq I^m_{c,\mathcal{L}} + M$.
  To show the other containment, consider a monomial
  $p = x_0^{a_0} x_1^{a_1} \ldots x_n^{a_n} \in
  I^{(m)}_{c,\mathcal{L}}$.  Since $p\in I^{(m)}_{c,\mathcal{L}}$, we
  have $p\in I_{c,\mathcal{L}}$. Then
  $|\operatorname{Supp} (p)| \geqslant n-c+2$ by Theorem
  \ref{pro:star-generators}.

  If $|\operatorname{Supp} (p)| = n-c+2$, then the complement of
  $\operatorname{Supp} (p)$ in $\{x_0,x_1,\ldots,x_n\}$ has
  cardinality $c-1$. Therefore we can write
  \begin{equation*}
    \{x_0,x_1,\ldots,x_n\} \setminus \operatorname{Supp}(p)
    = \{x_{j_1},\ldots,x_{j_{c-1}}\}.
  \end{equation*}
  For each $x_i \in \operatorname{Supp}(p)$, equation \eqref{eq:1}
  implies that
  \begin{equation*}
    a_i = a_i + a_{j_1} + \ldots + a_{j_{c-1}} \geqslant m.
  \end{equation*}
  Thus $p$ is a multiple of
  \begin{equation*}
    \prod_{x_i \in \operatorname{Supp} (p)} x_i^m =
    \Bigg(\prod_{x_i \in \operatorname{Supp} (p)} x_i\Bigg)^m
  \end{equation*}
  which is the $m$-th power of a generator of $I_{c,\mathcal{L}}$ by
  Theorem \ref{pro:star-generators}. Therefore
  $p\in I^m_{c,\mathcal{L}}$.

  On the other hand, if $|\operatorname{Supp} (p)| \geqslant n-c+3$,
  then $p \in M$ by definition.
\end{proof}

For $m=2$ and $m=3$, we can improve upon the statement of Theorem
\ref{thm:symbolic-monomial}.

\begin{cor}
  \label{cor:symb-square-monomial}
  Let $\mathcal{L} = \{x_0,\ldots,x_n\}$.
  We have
  $I^{(2)}_{c,\mathcal{L}} = I_{c-1,\mathcal{L}} +
  I^2_{c,\mathcal{L}}$.
\end{cor}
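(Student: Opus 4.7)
The plan is to invoke Theorem \ref{thm:symbolic-monomial} with $m=2$ and then identify the auxiliary ideal $M$ (generated by monomials satisfying \eqref{eq:1} with $m=2$ and support of size at least $n-c+3$) with the star configuration ideal $I_{c-1,\mathcal{L}}$. Since we already know $I^{(2)}_{c,\mathcal{L}} = I^2_{c,\mathcal{L}} + M$, the corollary will follow immediately once the identification $M = I_{c-1,\mathcal{L}}$ is established.

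For the containment $I_{c-1,\mathcal{L}} \subseteq M$, I would start from Theorem \ref{pro:star-generators}: since $|\mathcal{L}| = s = n+1$, the minimal generators of $I_{c-1,\mathcal{L}}$ are the squarefree monomials in $n + 1 - (c-1) + 1 = n - c + 3$ distinct variables. For any such generator $g$, its support $S$ has size $n-c+3$, and its complement in $\{x_0,\ldots,x_n\}$ has size $c-2$. Consequently, for any choice of $c$ indices $0 \leqslant i_1 < \cdots < i_c \leqslant n$, at least $c - (c-2) = 2$ of them must lie in $S$, so $a_{i_1} + \cdots + a_{i_c} \geqslant 2$. Hence $g$ satisfies \eqref{eq:1} with $m=2$, and since equation \eqref{eq:1} is preserved under multiplication by any monomial, the whole ideal $I_{c-1,\mathcal{L}}$ lies in $M$.

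For the reverse containment $M \subseteq I_{c-1,\mathcal{L}}$, I would take any monomial generator $p$ of $M$, so $|\operatorname{Supp}(p)| \geqslant n - c + 3$. Choose any subset $T \subseteq \operatorname{Supp}(p)$ of cardinality exactly $n-c+3$; the squarefree monomial $\prod_{x_i \in T} x_i$ is a minimal generator of $I_{c-1,\mathcal{L}}$ by Theorem \ref{pro:star-generators}, and it divides $p$. Thus $p \in I_{c-1,\mathcal{L}}$.

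Combining both inclusions gives $M = I_{c-1,\mathcal{L}}$, and substituting into Theorem \ref{thm:symbolic-monomial} yields the claimed equality. There is no real obstacle here — the argument is a short combinatorial verification — the only point to double-check is the counting of the size of the complement of $\operatorname{Supp}(p)$, which is what makes the exponent bound $m=2$ automatic; for larger $m$ this direct identification with a single star configuration ideal breaks down, which is why the statement is specific to symbolic squares.
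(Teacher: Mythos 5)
Your proof is correct and follows essentially the same route as the paper: the reverse containment is exactly the paper's support-cardinality argument (a monomial in $I^{(2)}_{c,\mathcal{L}}$ with support of size $n-c+2$ lies in $I^2_{c,\mathcal{L}}$, and one with larger support is divisible by a generator of $I_{c-1,\mathcal{L}}$). The only difference is that you verify $I_{c-1,\mathcal{L}}\subseteq M$ directly from condition \eqref{eq:1}, whereas the paper cites \cite[Lemma 2.13]{GHM} for the containment $I_{c-1,\mathcal{L}}\subseteq I^{(2)}_{c,\mathcal{L}}$; your version is slightly more self-contained but the content is the same.
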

\begin{proof}
  By \cite[Lemma 2.13]{GHM}, we have
  $I_{c-1,\mathcal{L}} \subseteq I^{(2)}_{c,\mathcal{L}}$, which
  implies the containment
  $I^{(2)}_{c,\mathcal{L}} \supseteq I_{c-1,\mathcal{L}} +
  I^2_{c,\mathcal{L}}$ (these containments hold for any
    linear star configuration ideal, not just a monomial
  star configuration ideal).  To prove the other
  containment, we use the fact that our ideals are monomial ideals.

  Consider a monomial
  $p = x_0^{a_0} x_1^{a_1} \ldots x_n^{a_n} \in
  I^{(2)}_{c,\mathcal{L}}$.  As observed in the proof of Theorem
  \ref{thm:symbolic-monomial},
  $|\operatorname{Supp} (p)| \geqslant n-c+2$ and, in the case of
  equality, $p\in I^{2}_{c,\mathcal{L}}$.  Assume
  $|\operatorname{Supp} (p)| \geqslant n-c+3$.  Then $p$ is divisible
  by one of the generators of $I_{c-1,\mathcal{L}}$ described in
  Theorem \ref{pro:star-generators}. Therefore
  $p \in I_{c-1,\mathcal{L}}$.
\end{proof}

\begin{rem}
  The above result was first proved in \cite[Corollary 3.7, Corollary
  4.5]{LBM} in the special cases that $n=c=2$, and $n=c=3$.  The above
  statement is also mentioned in \cite[Remark 4.6]{LBM}, but no proof
  is given.
\end{rem}

\begin{cor}
  \label{cor:symb-cube-monomial}
  Let $\mathcal{L} = \{x_0,\ldots,x_n\}$.
  If $c \geq 3$, we have
  $I^{(3)}_{c,\mathcal{L}} = I_{c-2,\mathcal{L}} + I_{c-1,\mathcal{L}}
  I_{c,\mathcal{L}} + I^3_{c,\mathcal{L}}$.
\end{cor}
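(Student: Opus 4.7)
The plan is to mirror the proof of Corollary \ref{cor:symb-square-monomial}, reducing the problem to monomial bookkeeping via equation \eqref{eq:1}.

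For the containment $\supseteq$, I handle the three summands on the right-hand side separately. The inclusion $I^3_{c,\mathcal{L}} \subseteq I^{(3)}_{c,\mathcal{L}}$ is immediate. For $I_{c-1,\mathcal{L}} I_{c,\mathcal{L}} \subseteq I^{(3)}_{c,\mathcal{L}}$, I combine the inclusion $I_{c-1,\mathcal{L}} \subseteq I^{(2)}_{c,\mathcal{L}}$ (already used in Corollary \ref{cor:symb-square-monomial} via \cite[Lemma 2.13]{GHM}) with the multiplicativity $I^{(a)} \cdot I^{(b)} \subseteq I^{(a+b)}$, which for our ideal is transparent from the primary decomposition in Theorem \ref{pro:symbolic-powers-star} combined with Lemma \ref{sdefectlemma}(ii) at each complete intersection prime. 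For $I_{c-2,\mathcal{L}} \subseteq I^{(3)}_{c,\mathcal{L}}$, I verify the condition on monomial generators directly: a generator of $I_{c-2,\mathcal{L}}$ is a squarefree product of $n-c+4$ variables, and any $c$-subset of $\{x_0,\ldots,x_n\}$ meets such a support in at least $c-(c-3)=3$ variables, each contributing exponent $1$, so equation \eqref{eq:1} holds with $m=3$.

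For the reverse containment, I take an arbitrary monomial $p = x_0^{a_0} \cdots x_n^{a_n} \in I^{(3)}_{c,\mathcal{L}}$ and split by the cardinality of $\operatorname{Supp}(p)$, which is at least $n-c+2$ by Theorem \ref{pro:star-generators}. When $|\operatorname{Supp}(p)| = n-c+2$, the argument from Theorem \ref{thm:symbolic-monomial} (now with $m=3$) forces every exponent on $\operatorname{Supp}(p)$ to be at least $3$, placing $p$ in $I^3_{c,\mathcal{L}}$. When $|\operatorname{Supp}(p)| \geq n-c+4$, any $n-c+4$ variables in $\operatorname{Supp}(p)$ form a generator of $I_{c-2,\mathcal{L}}$ dividing $p$.

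The main obstacle is the middle case $|\operatorname{Supp}(p)| = n-c+3$, which is why the hypothesis $c \geq 3$ is needed. Here the complement of $\operatorname{Supp}(p)$ in $\{x_0,\ldots,x_n\}$ has exactly $c-2$ elements; substituting this complement together with any two variables $x_i, x_j \in \operatorname{Supp}(p)$ into a $c$-subset in equation \eqref{eq:1} yields the pairwise constraint $a_i + a_j \geq 3$. Consequently at most one variable in $\operatorname{Supp}(p)$ has exponent equal to $1$. Writing $\operatorname{Supp}(p) = \{x_{i_1},\ldots,x_{i_{n-c+3}}\}$ with $x_{i_{n-c+3}}$ chosen to have the smallest exponent, I exhibit $p$ as a multiple of
\[
(x_{i_1} \cdots x_{i_{n-c+2}}) \cdot (x_{i_1} \cdots x_{i_{n-c+3}}),
\]
where the first factor is a generator of $I_{c,\mathcal{L}}$ and the second a generator of $I_{c-1,\mathcal{L}}$. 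A quick split on whether every exponent on $\operatorname{Supp}(p)$ is at least $2$, or exactly one is equal to $1$, confirms the divisibility, giving $p \in I_{c-1,\mathcal{L}} I_{c,\mathcal{L}}$ and completing the proof.
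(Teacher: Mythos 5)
Your proposal is correct and follows essentially the same route as the paper's proof: the same direct verification that $I_{c-2,\mathcal{L}}\subseteq I^{(3)}_{c,\mathcal{L}}$, the same use of $I_{c-1,\mathcal{L}}\subseteq I^{(2)}_{c,\mathcal{L}}$ to handle the product term, and the same case split on $|\operatorname{Supp}(p)|$ with the pairwise bound $a_i+a_j\geq 3$ forcing all but at most one support variable to have exponent at least $2$ in the middle case. The factorization you exhibit is, up to relabeling of indices, exactly the one in the paper.
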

\begin{proof}
  We require $c \geq 3$ so that the ideals on the right
    hand side are defined.  We first show that
    $I_{{c-2},\mathcal{L}} \subseteq I^{(3)}_{c,\mathcal{L}}$.  Recall
    that
$$
I_{c-2,\mathcal{L}} = \langle x_{i_1} \cdots x_{i_{n-c+4}} \mid 0\le
i_1<\cdots < i_{n-c+4}\le n \rangle.
$$
Consider any subset $A = \{x_{i_1},\ldots,x_{i_c}\}$ of
$\{x_0,x_1,\dots,x_n\}$ with $|A|=c$, and consider any generator
$m = x_{i_1} \cdots x_{i_{n-c+4}}$ of $I_{c-2,\mathcal{L}}$.  Then at
least three of the variables of $A$, say $x_i,x_j,$ and $x_k$, appear
in ${\rm Supp}(m) = \{ x_{i_1}, \ldots, x_{i_{n-c+4}}\}$.  Because
$x_ix_jx_k \in \langle x_{i_1},\ldots,x_{i_c} \rangle^3$, this means
that $m \in \langle x_{i_1},\ldots,x_{i_c} \rangle^3$.  But this
implies that every generator $m$ of $I_{{c-2},\mathcal{L}}$ satisfies
\[m \in \left(\bigcap_{0 \leq i_1 < \cdots < i_c \leq n} \langle
    x_{i_1},\ldots,x_{i_c} \rangle^3 \right)=
  I_{c,\mathcal{L}}^{(3)}.\] In other words,
$I_{{c-2},\mathcal{L}} \subseteq I^{(3)}_{c,\mathcal{L}}$.

By \cite[Lemma 2.13]{GHM}, we have
$I_{{c-1},\mathcal{L}} \subseteq I_{c,\mathcal{L}}^{(2)}$.  This
result allows us to conclude that
\begin{equation*}
  I_{c-1,\mathcal{L}} I_{c,\mathcal{L}} \subseteq
  I^{(2)}_{c,\mathcal{L}} I_{c,\mathcal{L}}
  \subseteq I^{(3)}_{c,\mathcal{L}}.
\end{equation*}
Therefore we have the containment
$I^{(3)}_{c,\mathcal{L}} \supseteq I_{c-2,\mathcal{L}} +
I_{c-1,\mathcal{L}} I_{c,\mathcal{L}} + I^3_{c,\mathcal{L}}$.  To
prove the other containment, we again exploit the fact that our ideals
are all monomial.

Consider a monomial
$p = x_0^{a_0} x_1^{a_1} \ldots x_n^{a_n} \in
I^{(3)}_{c,\mathcal{L}}$.  By Theorem \ref{thm:symbolic-monomial},
$|\operatorname{Supp} (p)| \geqslant n-c+2$ and, in the case of
equality, $p\in I^{3}_{c,\mathcal{L}}$.  Let
$|\operatorname{Supp} (p)| =n-c+3$. In this case, the complement of
$\operatorname{Supp} (p)$ in $\{x_0,x_1,\ldots,x_n\}$ has cardinality
$c-2$, so we can write
\begin{equation*}
  \{x_0,x_1,\ldots,x_n\} \setminus \operatorname{Supp}(p)
  = \{x_{j_1},\ldots,x_{j_{c-2}}\}.
\end{equation*}
For each pair $x_{i_1},x_{i_2} \in \operatorname{Supp}(p)$, equation
\eqref{eq:1} implies that
\begin{equation*}
  a_{i_1} + a_{i_2} = a_{i_1} + a_{i_2} + a_{j_1} + \ldots + a_{j_{c-2}} \geqslant 3.
\end{equation*}
Thus either $a_{i_1} \geqslant 2$ or $a_{i_2} \geqslant 2$. Repeating
the same argument for all pairs $x_{i_1},x_{i_2}$ in
$\operatorname{Supp}(p)$, it follows that there are $n-c+2$ elements
$x_h \in \operatorname{Supp} (p)$ such that $x_h^2 \mid p$. Hence $p$
is divisible by a monomial of the form
\begin{equation*}
  x_{k_0} x_{k_1}^2 \ldots x_{k_{n-c+2}}^2 =
  (x_{k_0} x_{k_1} \ldots x_{k_{n-c+2}})
  (x_{k_1} \ldots x_{k_{n-c+2}}),
\end{equation*}
and therefore $p \in I_{c-1,\mathcal{L}} I_{c,\mathcal{L}}$ by Theorem
\ref{pro:star-generators}.  As in the previous proof,
if $|\operatorname{Supp} (p)| \geqslant n-c+4$, then $p$ is divisible
by a generator of $I_{c-2,\mathcal{L}}$, which completes the proof.
\end{proof}

\begin{thm}
  \label{cor:monomial-sdefect}
  Let $\mathcal{L} = \{x_0,\ldots,x_n\}$.  We have
  $\sdef (I_{c,\mathcal{L}},m) =1$ if and only if $c=m =2$.
\end{thm}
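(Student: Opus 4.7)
The plan is to handle the two implications of the equivalence separately, with the main work going into the reverse direction.

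For the forward direction ($c = m = 2 \Rightarrow \sdef = 1$), I would invoke Corollary~\ref{cor:symb-square-monomial}, which gives $I^{(2)}_{2,\mathcal{L}} = I_{1,\mathcal{L}} + I^2_{2,\mathcal{L}}$. Since $I_{1,\mathcal{L}} = \langle x_0 x_1 \cdots x_n \rangle$ is principal, $\sdef(I_{2,\mathcal{L}}, 2) \leq 1$. A quick degree count shows the generator $x_0 \cdots x_n$ is not in $I^2_{2,\mathcal{L}}$: it has degree $n+1$, while the generators of $I^2_{2,\mathcal{L}}$ have degree $2n$, and the inequality $n + 1 < 2n$ holds because $n \geq c = 2$. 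Hence $\sdef = 1$.

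For the reverse direction, the cases $m = 1$ and $c = 1$ give $\sdef = 0 \neq 1$ by Lemma~\ref{sdefectlemma} (when $c = 1$, the ideal $I_{1,\mathcal{L}} = \langle x_0 \cdots x_n \rangle$ is a complete intersection). It remains to show $\sdef \geq 2$ whenever $c \geq 2$, $m \geq 2$, and $(c,m) \neq (2,2)$. Because $I^{(m)}_{c,\mathcal{L}}$ is a monomial ideal, a standard Nakayama-type argument identifies a minimal generating set of $I^{(m)}/I^m$ with those minimal monomial generators of $I^{(m)}_{c,\mathcal{L}}$ which do not lie in $I^m_{c,\mathcal{L}}$; so it suffices to produce at least two such monomials. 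The explicit family I would use is, for each $(n-c+3)$-subset $T \subseteq \{0,1,\ldots,n\}$ and each element $k \in T$, the monomial
\[ p_{T,k} := x_k \prod_{j \in T \setminus \{k\}} x_j^{m-1}. \]
I would check: (i) $p_{T,k} \in I^{(m)}_{c,\mathcal{L}}$ by verifying~\eqref{eq:1}, using the inclusion-exclusion bound $|S \cap T| \geq |S| + |T| - (n+1) = 2$ for every $c$-subset $S$; (ii) $p_{T,k}$ is a minimal generator of $I^{(m)}_{c,\mathcal{L}}$, by observing that for each $i \in T$ the $c$-subset $\{k,j\} \cup (\{0,\ldots,n\} \setminus T)$ (with $j := i$ when $i \neq k$, otherwise any $j \in T \setminus \{k\}$) has exponent sum dropping from $m$ to $m-1$ after dividing $p_{T,k}$ by $x_i$, violating~\eqref{eq:1}; (iii) $p_{T,k} \notin I^m_{c,\mathcal{L}}$, as $\deg p_{T,k} = 1 + (n-c+2)(m-1)$ is strictly less than the minimal generator degree $m(n-c+2)$ of $I^m_{c,\mathcal{L}}$, with difference $n - c + 1 \geq 1$.

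A final count closes the argument. When $m = 2$, the monomials $p_{T,k}$ depend only on $T$ (they collapse to the squarefree monomial $\prod_{j \in T} x_j$), giving $\binom{n+1}{c-2}$ distinct monomials; when $m \geq 3$, distinct pairs $(T,k)$ yield distinct monomials, for a total of $(n-c+3)\binom{n+1}{c-2}$. In each remaining subcase this count exceeds $1$: for $m = 2, c \geq 3$ one has $\binom{n+1}{c-2} \geq n+1 \geq 4$; for $m \geq 3, c = 2$ the count is $n+1 \geq 3$; and for $m \geq 3, c \geq 3$ it is at least $3(n+1) \geq 12$. The principal obstacle is the minimality check~(ii), which requires locating an explicit failing $c$-subset after each single-variable division; however, the uniform choice $\{k,j\} \cup (\{0,\ldots,n\}\setminus T)$ handles all cases at once, mirroring the pair-sum analyses in Theorem~\ref{thm:symbolic-monomial} and Corollary~\ref{cor:symb-square-monomial}.
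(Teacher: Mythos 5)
Your proposal is correct and follows essentially the same route as the paper: both directions rest on the decomposition $I^{(m)}_{c,\mathcal{L}} = I^m_{c,\mathcal{L}} + M$, and the reverse implication is proved by exhibiting exactly the same witness monomials $x_{k}\prod_{j} x_j^{m-1}$ on a support of size $n-c+3$, checking they are minimal generators lying below the generating degree $m(n-c+2)$ of $I^m_{c,\mathcal{L}}$, and counting them. Your write-up is somewhat more explicit than the paper's (the Nakayama identification, the failing $c$-subset after each division, and the final case-by-case count), but it is the same argument.
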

\begin{proof}
  Let $c=m=2$. By Theorem \ref{pro:star-generators},
  $I_{c-1,\mathcal{L}}=I_{1,\mathcal{L}}=\langle x_0 x_1
    \cdots x_n \rangle$ is a principal ideal generated in degree
    $n+1$. In contrast, $I_{c,\mathcal{L}}^2$ is generated in degree
  $n^2$. Therefore, the equality
  $I^{(2)}_{c,\mathcal{L}} = I_{c-1,\mathcal{L}} +
  I^2_{c,\mathcal{L}}$ of Corollary \ref{cor:symb-square-monomial},
  implies that $I^{(2)}_{c,\mathcal{L}}/I^2_{c,\mathcal{L}}$ has a
  single minimal generator. Thus $\sdef (I_{c,\mathcal{L}},m) =1$.

  Conversely, assume $\sdef (I_{c,\mathcal{L}},m) =1$. By Theorem
  \ref{thm:symbolic-monomial},
  $I_{c,\mathcal{L}}^{(m)} = I_{c,\mathcal{L}}^m + M$, where $M$ is
  the monomial ideal generated by all monomials satisfying equation
  \eqref{eq:1} whose support has cardinality at least $n-c+3$. Since
  $\sdef (I_{c,\mathcal{L}},m) =1$, we deduce $M\neq 0$. Given any
  monomial $p \in M$, we must have
  \begin{equation*}
    n+1 \geqslant |\operatorname{Supp} (p) | \geqslant n-c+3.
  \end{equation*}
  This implies $c\geqslant 2$. For any choice of indices
  $0\leqslant i_{1} < \cdots < i_{n-c+3} \leqslant n$, the monomial
  \begin{equation}
    \label{eq:3}
    p = x_{i_1} x_{i_2}^{m -1} x_{i_3}^{m -1} \cdots x_{i_{n-c+3}}^{m-1}
  \end{equation}
  satisfies the condition in equation \eqref{eq:1}, and therefore
  $p\in M$. We claim that $p$ is a minimal generator of $M$. If it was
  not, then we could divide $p$ by a variable in its support and
  obtain a new monomial still in $M$. However, if we divide $p$ by any
  variable in its support, we either obtain a monomial whose support
  has cardinality less than $n-c+3$ or one that violates equation
  \eqref{eq:1}. Thus the claim holds.  Note also that the degree of
  $p$ is $(m -1)(n-c+2)+1$, and this is strictly smaller than the
  degree of a minimal generator of $I_{c,\mathcal{L}}^m$, i.e.,
  $m (n-c+2)$. It follows that the residue class of $p$ can be taken
  as a minimal generator of
  $I_{c,\mathcal{L}}^{(m)} / I_{c,\mathcal{L}}^m$. Hence each monomial
  of the same form as $p$ contributes 1 to
  $\sdef (I_{c,\mathcal{L}}, m)$.  Now, if $c>2$ or $m>2$, the freedom
  in the choice of the indices $i_{1}, \ldots, i_{n-c+3}$ implies that
  $\sdef (I_{c,\mathcal{L}}, m) > 1$.
\end{proof}

\subsection{The general case}
\label{sec:general-case}

To extend the results of the monomial case to arbitrary star
configurations, we recall a powerful theorem of Geramita, Harbourne,
Migliore, and Nagel \cite[Theorem 3.6 (i)]{GHMN}.

\begin{thm}
  \label{thm:GHMN-main}
  Let $I_{c,\mathcal{F}}$ be the defining ideal of a star
  configuration in $\mathbb{P}^n$, with
  $\mathcal{F}=\{F_1,\ldots,F_s\}\subseteq
  R=k[x_0,x_1,\ldots,x_n]$. Let $S=k[y_1,\ldots,y_s]$ and define a
  ring homomorphism $\varphi \colon S\to R$ by setting
  $\varphi (y_i) = F_i$ for $1\leq i\leq s$. If $I$ is an ideal of
  $S$, then we write $\varphi_* (I)$ to the denote the ideal of $R$
  generated by $\varphi (I)$. Let $\mathcal{L} =
  \{y_1,\ldots,y_s\}$. Then, for each positive integer $m$, we have
  \begin{equation*}
    I_{c,\mathcal{F}}^{(m)} = \varphi_* (I_{c,\mathcal{L}})^{(m)} =
    \varphi_* (I_{c,\mathcal{L}}^{(m)}).
  \end{equation*}
\end{thm}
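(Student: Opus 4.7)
The plan is to prove the two equalities of the theorem separately, the first via an easy generator comparison and the second by reducing to a question about intersections.

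For the first equality, I would establish $\varphi_*(I_{c,\mathcal{L}}) = I_{c,\mathcal{F}}$ by matching generators: Theorem \ref{pro:star-generators}, applied in $S$ and in $R$, gives minimal generating sets $\{y_{i_1}\cdots y_{i_{s-c+1}}\}$ and $\{F_{i_1}\cdots F_{i_{s-c+1}}\}$ respectively, and $\varphi$ carries one to the other. Since the symbolic power of an ideal is determined by the ideal itself, this immediately yields $I_{c,\mathcal{F}}^{(m)} = \varphi_*(I_{c,\mathcal{L}})^{(m)}$.

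For the second equality $\varphi_*(I_{c,\mathcal{L}})^{(m)} = \varphi_*(I_{c,\mathcal{L}}^{(m)})$, I would apply Theorem \ref{pro:symbolic-powers-star} to rewrite both sides as intersections. Writing $K = \{i_1<\cdots<i_c\}$ for a generic $c$-subset of $\{1,\ldots,s\}$,
\[
I_{c,\mathcal{L}}^{(m)} = \bigcap_K \langle y_{i_1},\ldots,y_{i_c}\rangle^m \quad\text{and}\quad I_{c,\mathcal{F}}^{(m)} = \bigcap_K \langle F_{i_1},\ldots,F_{i_c}\rangle^m.
\]
The inclusion $\varphi_*(I_{c,\mathcal{L}}^{(m)}) \subseteq I_{c,\mathcal{F}}^{(m)}$ is immediate, since $\varphi$ carries each $\langle y_{i_1},\ldots,y_{i_c}\rangle^m$ into $\langle F_{i_1},\ldots,F_{i_c}\rangle^m$, so every generator of the left-hand ideal lies in the intersection on the right.

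The main obstacle is the reverse inclusion $I_{c,\mathcal{F}}^{(m)} \subseteq \varphi_*(I_{c,\mathcal{L}}^{(m)})$, since image under a ring homomorphism does not commute with intersection in general. To push it through, I would exploit the monomial structure of $I_{c,\mathcal{L}}^{(m)}$ in $S$: it is generated by monomials $y_1^{a_1}\cdots y_s^{a_s}$ whose exponent vectors satisfy the star-configuration inequality $a_{i_1}+\cdots+a_{i_c}\geq m$ for every $c$-subset, so $\varphi_*(I_{c,\mathcal{L}}^{(m)})$ is generated in $R$ by the corresponding products $F_1^{a_1}\cdots F_s^{a_s}$. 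The hypothesis that every $(c+1)$-subset of $\mathcal{F}$ is a regular sequence is essential here: it guarantees that for any $c$-subset $\{F_{i_1},\ldots,F_{i_c}\}$ and any $j\notin\{i_1,\ldots,i_c\}$, the form $F_j$ is a nonzerodivisor on $R/\langle F_{i_1},\ldots,F_{i_c}\rangle^k$ for every $k\geq 1$, so a Koszul-type filtration expresses elements of each $\langle F_{i_1},\ldots,F_{i_c}\rangle^m$ as $R$-linear combinations of monomials in the $F_i$'s. Threading these filtrations through the intersection---most cleanly by induction on $m$, with the base case $m=1$ supplied by the first equality---would let me rewrite an arbitrary $g \in \bigcap_K \langle F_{i_1},\ldots,F_{i_c}\rangle^m$ as a combination of products $F_1^{a_1}\cdots F_s^{a_s}$ whose exponent vectors respect the star-configuration inequalities, placing $g$ in $\varphi_*(I_{c,\mathcal{L}}^{(m)})$.
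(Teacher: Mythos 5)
This statement is not proven in the paper: it is quoted verbatim from Geramita--Harbourne--Migliore--Nagel \cite[Theorem 3.6 (i)]{GHMN}, so there is no in-paper argument to compare against and your proposal has to stand on its own. The parts of your proposal that are complete are fine: $\varphi_*(I_{c,\mathcal{L}})=I_{c,\mathcal{F}}$ does follow by matching the generating sets of Theorem \ref{pro:star-generators}, which gives the first equality, and the containment $\varphi_*(I_{c,\mathcal{L}}^{(m)})\subseteq I_{c,\mathcal{F}}^{(m)}$ is indeed immediate from the intersection descriptions. Your observation that $F_j$ is a nonzerodivisor on $R/\langle F_{i_1},\ldots,F_{i_c}\rangle^k$ for $j\notin\{i_1,\ldots,i_c\}$ is also correct (powers of a complete intersection are unmixed, and the regular-sequence hypothesis keeps $F_j$ out of every minimal prime).

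The genuine gap is the reverse containment $\bigcap_K\langle F_{i_1},\ldots,F_{i_c}\rangle^m\subseteq\varphi_*\bigl(\bigcap_K\langle y_{i_1},\ldots,y_{i_c}\rangle^m\bigr)$, which is the entire content of the theorem, and your sketch does not actually establish it. The difficulty is precisely that $\varphi_*$ does not commute with intersections in general, and the step you describe as ``threading these filtrations through the intersection'' is the whole problem restated, not an argument. Concretely: for a given $g$ in the intersection you obtain, for \emph{each} $c$-subset $K$ separately, a representation of $g$ as an $R$-linear combination of degree-$m$ monomials in $F_{i_1},\ldots,F_{i_c}$; nothing in your outline explains how to merge these $\binom{s}{c}$ different representations into a \emph{single} expression $g=\sum h_a F_1^{a_1}\cdots F_s^{a_s}$ in which every exponent vector satisfies all of the inequalities \eqref{eq:1} simultaneously. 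Passing from one representation to another requires controlling the syzygies among products of the $F_i$'s, and that is where the regular-sequence hypothesis must be exploited in a serious way (in \cite{GHMN} this is done through a separate inductive lemma showing that $\varphi_*$ commutes with intersections of the relevant monomial ideals). Likewise, the proposed induction on $m$ is not set up: knowing $g\in\varphi_*(I_{c,\mathcal{L}}^{(m-1)})$ gives a representation by $F$-monomials of the wrong (too small) $K$-degrees, and upgrading the coefficients again runs into the same syzygy problem. As written, the proof of the hard inclusion is a statement of intent rather than a proof.
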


Since the operator $\varphi_*$ commutes with ideal sums and products,
Theorem \ref{thm:GHMN-main} applied to our results from the previous
section gives the following more general statements.

\begin{thm}
  Let $I_{c,\mathcal{F}}$ be the defining ideal of a star
  configuration in $\mathbb{P}^n$, with
  $\mathcal{F}=\{F_1,\ldots,F_s\}$.  Then
  $I^{(m)}_{c,\mathcal{F}} = I^m_{c,\mathcal{F}} + M$, where $M$ is
  the ideal generated by all products $F_1^{a_1} \cdots F_s^{a_s}$
  such that:
  \begin{enumerate}
  \item $|\{i \mid a_i > 0\}| \geq s-c+2$;
  \item
    $\forall 0\leqslant i_1 < \ldots < i_c \leqslant n,\ a_{i_1} +
    a_{i_2} + \ldots + a_{i_c} \geqslant m$.
  \end{enumerate}
\end{thm}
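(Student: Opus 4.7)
The plan is to deduce this result directly from the monomial case, Theorem \ref{thm:symbolic-monomial}, by applying the ring homomorphism of Theorem \ref{thm:GHMN-main}. The strategy mirrors the philosophy already established in Section \ref{sec:general-case}: work in $S = k[y_1,\ldots,y_s]$ with $\mathcal{L}=\{y_1,\ldots,y_s\}$, exploit the monomial result there, and then push everything forward along $\varphi$.

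First I would invoke Theorem \ref{thm:symbolic-monomial} in the ring $S$. Since $|\mathcal{L}|=s$, the relevant ambient projective space is $\mathbb{P}^{s-1}$, so the support-cardinality threshold $n-c+3$ appearing in Theorem \ref{thm:symbolic-monomial} becomes $s-c+2$. This yields $I^{(m)}_{c,\mathcal{L}} = I^m_{c,\mathcal{L}} + M_0$, where $M_0 \subseteq S$ is the monomial ideal generated by those monomials $y_1^{a_1}\cdots y_s^{a_s}$ satisfying $|\{i \mid a_i>0\}| \geq s-c+2$ together with the analog of equation \eqref{eq:1}, namely $a_{i_1}+\cdots+a_{i_c}\geq m$ for every size-$c$ subset $\{i_1,\ldots,i_c\}$ of $\{1,\ldots,s\}$ (this is simply the membership criterion in $I^{(m)}_{c,\mathcal{L}}$ furnished by Theorem \ref{pro:symbolic-powers-star}).

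Next I would apply $\varphi_*$ to both sides of this decomposition. Because $\varphi_*$ commutes with ideal sums and products, as noted just before the statement, Theorem \ref{thm:GHMN-main} turns the left-hand side into $I^{(m)}_{c,\mathcal{F}}$. On the right-hand side, the first summand becomes $\varphi_*(I^m_{c,\mathcal{L}}) = \varphi_*(I_{c,\mathcal{L}})^m = I^m_{c,\mathcal{F}}$ (using the $m=1$ case of Theorem \ref{thm:GHMN-main} in the last step), while the second summand $\varphi_*(M_0)$ is generated by the images $\varphi(y_1^{a_1}\cdots y_s^{a_s}) = F_1^{a_1}\cdots F_s^{a_s}$ of the generators of $M_0$. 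These images are precisely the products described by conditions (1) and (2) of the statement, so $\varphi_*(M_0) = M$, and the desired equality $I^{(m)}_{c,\mathcal{F}} = I^m_{c,\mathcal{F}} + M$ follows.

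The argument requires essentially no new work beyond the machinery already assembled. The only point that needs care is purely bookkeeping: matching the support threshold $n-c+3$ of Theorem \ref{thm:symbolic-monomial} with the threshold $s-c+2$ appropriate for $s$ variables, and verifying that the membership criterion from Theorem \ref{pro:symbolic-powers-star} recovers condition (2) verbatim. No subtlety about minimality arises, since the theorem only claims an equality of ideals rather than asserting that the displayed products $F_1^{a_1}\cdots F_s^{a_s}$ form a minimal generating set of $M$.
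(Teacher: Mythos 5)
Your proposal is correct and matches the paper's own (essentially one-line) justification: the paper likewise obtains this theorem by applying $\varphi_*$ of Theorem \ref{thm:GHMN-main} to the monomial-case decomposition of Theorem \ref{thm:symbolic-monomial}, using that $\varphi_*$ commutes with sums and products of ideals. Your bookkeeping of the support threshold ($n-c+3$ in $n+1$ variables becoming $s-c+2$ in $s$ variables) is exactly the right check, and you correctly note that only an equality of ideals, not minimality of generators, is being claimed.
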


\begin{cor}
  \label{cor:general-symb-square}
  We have
  $I^{(2)}_{c,\mathcal{F}} = I_{c-1,\mathcal{F}} +
  I^2_{c,\mathcal{F}}$.
\end{cor}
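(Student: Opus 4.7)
The plan is to leverage the transfer principle from Geramita-Harbourne-Migliore-Nagel (Theorem \ref{thm:GHMN-main}) to reduce the arbitrary star configuration case to the monomial case already settled in Corollary \ref{cor:symb-square-monomial}. Let $S = k[y_1, \ldots, y_s]$, let $\mathcal{L} = \{y_1, \ldots, y_s\}$ be the set of variables of $S$, and let $\varphi: S \to R$ be the ring homomorphism with $\varphi(y_i) = F_i$ as in Theorem \ref{thm:GHMN-main}.

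First, I would apply Theorem \ref{thm:GHMN-main} with $m = 2$ to obtain $I^{(2)}_{c,\mathcal{F}} = \varphi_*(I^{(2)}_{c,\mathcal{L}})$. Since $\mathcal{L}$ is exactly the set of variables of $S$, Corollary \ref{cor:symb-square-monomial} applies in $S$ and yields $I^{(2)}_{c,\mathcal{L}} = I_{c-1,\mathcal{L}} + I^2_{c,\mathcal{L}}$. Next, I would invoke the fact, noted by the authors in the paragraph preceding the statement, that $\varphi_*$ commutes with ideal sums and products. This gives
\begin{equation*}
  \varphi_*\bigl(I_{c-1,\mathcal{L}} + I^2_{c,\mathcal{L}}\bigr)
  = \varphi_*(I_{c-1,\mathcal{L}}) + \varphi_*(I_{c,\mathcal{L}})^2.
\end{equation*}
Finally, applying Theorem \ref{thm:GHMN-main} with $m = 1$ (once with $c$ and once with $c-1$) identifies $\varphi_*(I_{c,\mathcal{L}}) = I_{c,\mathcal{F}}$ and $\varphi_*(I_{c-1,\mathcal{L}}) = I_{c-1,\mathcal{F}}$; assembling the pieces yields the desired identity.

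There is no serious obstacle here: once the monomial case and the GHMN transfer theorem are available, the statement follows formally. One minor point to record is that the identity presupposes $c \geq 2$, so that $I_{c-1,\mathcal{F}}$ is defined as a star configuration ideal; for $c = 1$ the ideal $I_{1,\mathcal{F}}$ is principal, hence a complete intersection, and Lemma \ref{sdefectlemma}(ii) already gives $I^{(2)}_{1,\mathcal{F}} = I^2_{1,\mathcal{F}}$.
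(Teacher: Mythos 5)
Your proposal is correct and is essentially the paper's own argument: the corollary is stated as an immediate consequence of the monomial case (Corollary \ref{cor:symb-square-monomial}) combined with Theorem \ref{thm:GHMN-main} and the fact that $\varphi_*$ commutes with sums and products. Your extra remarks about identifying $\varphi_*(I_{c-1,\mathcal{L}})$ with $I_{c-1,\mathcal{F}}$ and about the implicit hypothesis $c\geq 2$ are accurate and fill in details the paper leaves unstated.
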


\begin{cor} We have
  $\sdef(I_{c,\mathcal{F}},2) \leq
    \binom{s}{c-2}$.  Furthermore, if
  $\mathcal{F} = \mathcal{L} = \{L_1,\ldots,L_s\}$, that is, if
  $I_{c,\mathcal{L}}$, is a linear star configuration, then
  $\sdef(I_{c,\mathcal{L}},2) = \binom{s}{c-2}$.
\end{cor}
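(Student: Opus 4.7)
The strategy is to prove the upper bound for arbitrary $\mathcal{F}$, and then match it with a degree-based lower bound in the linear case. For the upper bound, I would apply Corollary \ref{cor:general-symb-square} to write $I^{(2)}_{c,\mathcal{F}} = I_{c-1,\mathcal{F}} + I^2_{c,\mathcal{F}}$, so that the quotient $I^{(2)}_{c,\mathcal{F}}/I^2_{c,\mathcal{F}}$ is generated by the residues of the minimal generators of $I_{c-1,\mathcal{F}}$. Theorem \ref{pro:star-generators}, applied with $c$ replaced by $c-1$, says that $I_{c-1,\mathcal{F}}$ has exactly $\binom{s}{s-c+2}=\binom{s}{c-2}$ minimal generators, immediately giving $\sdef(I_{c,\mathcal{F}},2)\leq \binom{s}{c-2}$.

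To promote this to an equality in the linear case, I would exploit the strict degree gap between the two summands. Since $s\geq n+1$ and $c\leq n$, we have $s>c$, which is equivalent to $s-c+2<2(s-c+1)$. The minimal generators of $I_{c-1,\mathcal{L}}$ are products of $s-c+2$ linear forms, hence all of degree $s-c+2$, while those of $I^2_{c,\mathcal{L}}$ live in the strictly larger degree $2(s-c+1)$. Consequently $I^{(2)}_{c,\mathcal{L}}$ vanishes in every degree below $s-c+2$, and its graded piece in degree $s-c+2$ equals $(I_{c-1,\mathcal{L}})_{s-c+2}$, of $k$-dimension $\binom{s}{c-2}$ by the minimality portion of Theorem \ref{pro:star-generators} (distinct minimal generators of a graded ideal that all lie in a single degree are automatically $k$-linearly independent). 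A graded Nakayama computation then gives
\begin{equation*}
  \sdef(I_{c,\mathcal{L}},2) = \dim_k I^{(2)}_{c,\mathcal{L}} / \bigl(I^2_{c,\mathcal{L}} + \mathfrak{m}\, I^{(2)}_{c,\mathcal{L}}\bigr),
\end{equation*}
where $\mathfrak{m}$ denotes the graded maximal ideal of $R$. In degree $s-c+2$, both $(I^2_{c,\mathcal{L}})_{s-c+2}$ and $(\mathfrak{m}\, I^{(2)}_{c,\mathcal{L}})_{s-c+2}$ vanish (the former by the degree gap, the latter because $I^{(2)}_{c,\mathcal{L}}$ itself is zero below degree $s-c+2$), so this single graded piece contributes $\binom{s}{c-2}$ to the sum, forcing $\sdef(I_{c,\mathcal{L}},2)\geq \binom{s}{c-2}$.

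I do not foresee a substantive obstacle; the entire argument rests on the elementary inequality $s>c$ that produces the degree gap, together with the minimality assertion of Theorem \ref{pro:star-generators} applied to $I_{c-1,\mathcal{L}}$, which guarantees that the $\binom{s}{c-2}$ products $L_{i_1}\cdots L_{i_{s-c+2}}$ are $k$-linearly independent in $R$.
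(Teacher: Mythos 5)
Your argument is correct and follows essentially the same route as the paper: the upper bound comes from the decomposition $I^{(2)}_{c,\mathcal{F}} = I_{c-1,\mathcal{F}} + I^2_{c,\mathcal{F}}$ of Corollary \ref{cor:general-symb-square} together with the count of $\binom{s}{s-c+2}=\binom{s}{c-2}$ minimal generators from Theorem \ref{pro:star-generators}, and the equality in the linear case from the degree gap $s-c+2 < 2(s-c+1)$. The only difference is cosmetic: you make the paper's assertion that none of the generators of $I_{c-1,\mathcal{L}}$ are redundant precise via a graded Nakayama computation in the initial degree, which is exactly the justification the paper leaves implicit.
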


\begin{proof}
  By Corollary \ref{cor:general-symb-square},
  $I^{(2)}_{c,\mathcal{F}} = I_{c-1,\mathcal{F}} +
  I^2_{c,\mathcal{F}}$.  By Theorem \ref{pro:star-generators}, the
  ideal $I_{c-1,\mathcal{L}}$ is generated by
  $\binom{s}{s-c+2} = \binom{s}{c-2}$ minimal
    generators, so we need to add at most $\binom{s}{c-2}$ generators
  to $I^2_{c,\mathcal{F}}$ to generate $I^{(2)}_{c,\mathcal{L}}$.

  If $\mathcal{F} = \mathcal{L}$, by Theorem \ref{pro:star-generators}
  $I^2_{c,\mathcal{L}}$ is generated by forms of degree $2(s-c+1)$.
  On the other hand, again by Theorem \ref{pro:star-generators}, the
  ideal $I_{c-1,\mathcal{L}}$ is generated by generators of degree
  $s-c+2$.  Since $s-c+2 < 2(s-c+1)$, all the generators of
  $I_{c-1,\mathcal{L}}$ need to be added to $I^2_{c,\mathcal{L}}$ to
  generate $I_{c,\mathcal{L}}^{(2)}$, i.e., none of them are
  redundant.
\end{proof}

\begin{rem}
  In the above proof, we appealed to the degrees of the elements of
  $\mathcal{L}$ to justify why all the generators of
  $I_{c-1,\mathcal{L}}$ are required.  In the general case, it may
  happen that some of the minimal generators of $I_{c-1,\mathcal{F}}$
  have degree larger than a minimal generator of
  $I_{c,\mathcal{F}}^2$, thus preventing us from generalizing this
  argument.
\end{rem}

The following are also immediate consequences of results from the
previous section.

\begin{cor}
  We have
  $I^{(3)}_{c,\mathcal{F}} = I_{c-2,\mathcal{F}} + I_{c-1,\mathcal{F}}
  I_{c,\mathcal{F}} + I^3_{c,\mathcal{F}}$.  In particular,
  \[\sdef(I_{c,\mathcal{F}},3) \leq \binom{s}{c-3} +
    \binom{s}{c-2}\binom{s}{c-1}.\]
\end{cor}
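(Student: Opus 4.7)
The plan is to derive this corollary directly from the monomial version, Corollary \ref{cor:symb-cube-monomial}, by pushing forward along the ring homomorphism $\varphi$ of Theorem \ref{thm:GHMN-main}. This mirrors how the preceding Corollary \ref{cor:general-symb-square} was obtained from Corollary \ref{cor:symb-square-monomial}, so the strategy is essentially automatic given the machinery already built.

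First, I would work in the auxiliary ring $S = \k[y_1,\ldots,y_s]$ with $\mathcal{L} = \{y_1,\ldots,y_s\}$ and, assuming $c \geqslant 3$, quote Corollary \ref{cor:symb-cube-monomial} to get
\[
I^{(3)}_{c,\mathcal{L}} = I_{c-2,\mathcal{L}} + I_{c-1,\mathcal{L}}\, I_{c,\mathcal{L}} + I^3_{c,\mathcal{L}}.
\]
Then I would apply $\varphi_*$ to both sides. By Theorem \ref{thm:GHMN-main}, $\varphi_*(I^{(3)}_{c,\mathcal{L}}) = I^{(3)}_{c,\mathcal{F}}$ and $\varphi_*(I_{k,\mathcal{L}}) = I_{k,\mathcal{F}}$ for each relevant $k$. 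Since $\varphi_*$ commutes with ideal sums and products (the fact that the authors invoke in the paragraph immediately after Theorem \ref{thm:GHMN-main}), the right-hand side maps to $I_{c-2,\mathcal{F}} + I_{c-1,\mathcal{F}}\, I_{c,\mathcal{F}} + I^3_{c,\mathcal{F}}$, which proves the first assertion.

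For the bound on $\sdef(I_{c,\mathcal{F}},3)$, the key observation is that any generating set for the right-hand side modulo $I^3_{c,\mathcal{F}}$ gives an upper bound for the symbolic defect. Using Theorem \ref{pro:star-generators}, $I_{c-2,\mathcal{F}}$ is generated by the $\binom{s}{s-c+3} = \binom{s}{c-3}$ products of $s-c+3$ of the $F_i$'s, while $I_{c-1,\mathcal{F}}\, I_{c,\mathcal{F}}$ — being a product of two ideals — is generated by the pairwise products of a generating set of $I_{c-1,\mathcal{F}}$ (of size $\binom{s}{c-2}$) and one of $I_{c,\mathcal{F}}$ (of size $\binom{s}{c-1}$). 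Summing these two counts gives the stated bound.

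The proof has no substantive obstacle; it is purely a transport of structure. The only subtleties worth noting explicitly are the index-translation $\binom{s}{s-k+1} = \binom{s}{k-1}$ in the generator counts, and — as the authors already caution in the remark preceding the corollary — that unlike the linear case one cannot upgrade this inequality to an equality by a degree argument, since in the arbitrary-form setting a generator of $I_{c-2,\mathcal{F}}$ or of $I_{c-1,\mathcal{F}}\, I_{c,\mathcal{F}}$ may well have degree equal to or larger than a minimal generator of $I^3_{c,\mathcal{F}}$, and hence be redundant modulo $I^3_{c,\mathcal{F}}$.
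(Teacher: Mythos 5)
Your proposal is correct and matches the paper's intended argument exactly: the paper presents this corollary as an immediate consequence of Corollary \ref{cor:symb-cube-monomial} via the pushforward $\varphi_*$ of Theorem \ref{thm:GHMN-main}, with the generator count following from Theorem \ref{pro:star-generators} just as you describe. Your added remarks about the implicit hypothesis $c\geqslant 3$ and the non-sharpness of the bound in the non-linear case are accurate but not part of the paper's (essentially omitted) proof.
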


\begin{thm}\label{sdefect-one-classify}
  We have $\sdef (I_{c,\mathcal{F}},m) =1$ if and only if $c=m =2$.
\end{thm}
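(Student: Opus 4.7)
The strategy is to treat the two directions of the biconditional separately, anchored on Corollary \ref{cor:general-symb-square}, Theorem \ref{thm:GHMN-main}, and the monomial classification Theorem \ref{cor:monomial-sdefect}.

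For the direction $c = m = 2 \Rightarrow \sdef = 1$: Corollary \ref{cor:general-symb-square} yields $I^{(2)}_{2,\mathcal{F}} = \langle F_1 F_2 \cdots F_s \rangle + I^2_{2,\mathcal{F}}$, giving $\sdef \leq 1$ immediately. For the matching lower bound, I would show that $F_1 \cdots F_s \notin I^2_{2,\mathcal{F}}$, arguing by contradiction: a putative expression $F_1 \cdots F_s = \sum_{i \leq j} g_{ij} F_{\hat i} F_{\hat j}$, where $F_{\hat k} = \prod_{\ell \neq k} F_\ell$, can be reduced modulo $F_1$; since $F_{\hat i}$ for $i \neq 1$ is divisible by $F_1$, only the term $g_{11} F_{\hat 1}^2$ survives, and the regular sequence hypothesis on $\mathcal{F}$ forces $g_{11} \in (F_1)$, which after peeling off a factor of $F_1$ produces a degree contradiction.

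For the reverse direction, I first dispose of the trivial cases: $m = 1$ gives $\sdef = 0$ by Lemma \ref{sdefectlemma}(i), and $c = 1$ makes $I_{1,\mathcal{F}}$ principal, hence a complete intersection, so $\sdef = 0$ by Lemma \ref{sdefectlemma}(ii). So I may assume $c, m \geq 2$ with $(c,m) \neq (2,2)$ and must produce at least two minimal generators of $I^{(m)}_{c,\mathcal{F}}/I^m_{c,\mathcal{F}}$. Natural candidates are the $\varphi_*$-images of the distinguished monomials $p = y_{i_1} y_{i_2}^{m-1} \cdots y_{i_{s-c+2}}^{m-1}$ that appear in the proof of Theorem \ref{cor:monomial-sdefect}; by Theorem \ref{thm:GHMN-main} we have $\varphi_*(p) = F_{i_1} F_{i_2}^{m-1} \cdots F_{i_{s-c+2}}^{m-1} \in I^{(m)}_{c,\mathcal{F}}$. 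When $c > 2$ or $m > 2$, the monomial argument exhibits at least two genuinely different index configurations, and the task is to verify that the corresponding $\varphi_*$-images remain non-redundant modulo $I^m_{c,\mathcal{F}}$.

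The hardest step is precisely this non-redundancy check in the general case. The monomial proof of Theorem \ref{cor:monomial-sdefect} rests on a strict degree gap between a candidate $p$ and every minimal generator of $I^m_{c,\mathcal{L}}$; when the forms in $\mathcal{F}$ have unequal degrees, this gap can collapse, so one cannot rule out by degrees alone that several candidate $\varphi_*(p)$'s coincide or get absorbed into $I^m_{c,\mathcal{F}}$. I would overcome this by exploiting the regular sequence structure of $\mathcal{F}$: reducing modulo a well-chosen $F_j$ kills every generator of $I^m_{c,\mathcal{F}}$ whose expression involves $F_j$ (which is most of them), and carefully tracking which $\varphi_*(p)$'s survive as distinct nonzero residues supplies the required lower bound. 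The same kind of structural reduction is what completes the lower-bound step of the $c = m = 2$ case described above.
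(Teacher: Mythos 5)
Your forward direction ($c=m=2 \Rightarrow \sdef=1$) is sound: Corollary \ref{cor:general-symb-square} gives $I^{(2)}_{2,\mathcal{F}} = \langle F\rangle + I^2_{2,\mathcal{F}}$ with $F = F_1\cdots F_s$, since $I_{1,\mathcal{F}}$ is principal, and your reduction modulo $F_1$ is essentially the computation the paper carries out later in Lemma \ref{L:20160715-602}~(i), which shows $[I_{2,\mathcal{F}}^2 : F] = I_{3,\mathcal{F}} \neq R$ and hence $F\notin I^2_{2,\mathcal{F}}$.

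The reverse direction, however, has a genuine gap, and you have put your finger on it yourself without closing it. You correctly observe that the degree argument of Theorem \ref{cor:monomial-sdefect} --- every candidate monomial $p$ has degree strictly below the initial degree of $I^m_{c,\mathcal{L}}$ --- collapses when the $F_i$ have unequal degrees, so that the images $\varphi_*(p)$ might be absorbed into $I^m_{c,\mathcal{F}}$ or into the ideal generated by the remaining candidates. But your proposed remedy, ``reducing modulo a well-chosen $F_j$ \dots and carefully tracking which $\varphi_*(p)$'s survive as distinct nonzero residues,'' is a hope rather than an argument. Note in particular that reducing modulo $F_j$ does not kill all of $I^m_{c,\mathcal{F}}$: the generators built entirely from products of $s-c+1$ forms chosen from $\mathcal{F}\setminus\{F_j\}$ survive (such products exist because $s-c+1\leq s-1$), and when $c>2$ some of the candidates $\varphi_*(p)$ also have support avoiding $F_j$, so a single such reduction does not isolate the candidates, and no choice of $j$ is exhibited for which the bookkeeping closes. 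Until at least two candidates are shown to remain minimal generators of $I^{(m)}_{c,\mathcal{F}}/I^m_{c,\mathcal{F}}$ (or $\sdef=1$ is otherwise excluded when $(c,m)\neq(2,2)$), the ``only if'' direction is unproved. For what it is worth, the paper states this theorem as an immediate consequence of Theorem \ref{cor:monomial-sdefect} via the substitution $\varphi_*$ of Theorem \ref{thm:GHMN-main}, without spelling out how the minimal-generator count survives $\varphi_*$ (which a priori can only decrease $\mu$); indeed the remark following the $\sdef(I_{c,\mathcal{F}},2)\leq\binom{s}{c-2}$ corollary concedes exactly this difficulty. So you have correctly located where the real content lies, but your proposal does not supply it.
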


\subsection{Powers of codimension two linear star configurations}
We round out this section by considering the higher $m$-th symbolic
powers of the linear star configuration $I_{2,\mathcal{L}}$ in
$\mathbb{P}^2$.  Note that in this case the linear star configuration
defines a collection of points in $\mathbb{P}^2$.  By applying \cite[Corollary 3.9]{HH} 
 of  Harbourne and Huneke (and see also
\cite[Example 3.9]{Cetal} for additional details), we have the
following relationship between the regular and symbolic powers of
$I_{2,\mathcal{L}}$ in $\mathbb{P}^2$.

\begin{thm}\label{higherpowers}
  Suppose that $I_{2,\mathcal{L}}$ defines a linear star configuration
  in $\mathbb{P}^2$.  Then
  \[I_{2,\mathcal{L}}^{(2m)} = (I_{2,\mathcal{L}}^{(2)})^m ~~\mbox{
      for all $m \geq 1$.}\]
\end{thm}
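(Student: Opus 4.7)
The plan is to prove the two containments separately. The inclusion $(I_{2,\mathcal{L}}^{(2)})^m \subseteq I_{2,\mathcal{L}}^{(2m)}$ is automatic, coming from the general fact that $I^{(a)} I^{(b)} \subseteq I^{(a+b)}$ for any radical ideal $I$; iterating gives the containment in one line. So the real content of the theorem lies in the reverse inclusion $I_{2,\mathcal{L}}^{(2m)} \subseteq (I_{2,\mathcal{L}}^{(2)})^m$, and this is where I plan to invoke the cited result of Harbourne--Huneke \cite[Corollary 3.9]{HH}, read through the lens of \cite[Example 3.9]{Cetal}.

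That criterion asks for a distinguished element of $I_{2,\mathcal{L}}^{(2)}$ of minimal degree that vanishes to order exactly $2$ at every point of the configuration. Corollary \ref{cor:symb-square-monomial} (or equivalently Corollary \ref{cor:general-symb-square}) supplies precisely such an element: from the decomposition
$$I_{2,\mathcal{L}}^{(2)} = \langle L_1 \cdots L_s \rangle + I_{2,\mathcal{L}}^2,$$
the natural candidate is $F = L_1 \cdots L_s$. Geometrically, each point of the star configuration is the transverse intersection of exactly two of the lines cut out by the $L_i$, so $F$ vanishes to order precisely $2$ at each such point, and its degree $s$ coincides with the initial degree $\alpha(I_{2,\mathcal{L}}^{(2)})$. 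This is exactly the geometric data the criterion consumes.

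With $F$ identified and its multiplicity behavior pinned down, Harbourne--Huneke's Corollary 3.9 will yield the desired containment $I_{2,\mathcal{L}}^{(2m)} \subseteq (I_{2,\mathcal{L}}^{(2)})^m$ for every $m \geq 1$, completing the proof. The main obstacle in carrying this out is the bookkeeping: one has to translate the hypotheses of the criterion of \cite{HH} into our setup and verify, point by point, that the multiplicities provided by the product $L_1 \cdots L_s$ are exactly those demanded, and that no further generators of lower degree have been overlooked in $I_{2,\mathcal{L}}^{(2)}$. Both assertions follow from Theorem \ref{pro:star-generators} and the transversality of the $L_i$, so I expect no genuine surprises once the dictionary between the two sets of notation is in place.
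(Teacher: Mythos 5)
Your proposal is essentially the paper's argument: the paper offers no independent proof of this theorem, obtaining it entirely by citing Harbourne--Huneke \cite[Corollary 3.9]{HH} (with \cite[Example 3.9]{Cetal} for details), which is exactly the result you invoke for the nontrivial containment (and which already covers the easy containment $(I_{2,\mathcal{L}}^{(2)})^m \subseteq I_{2,\mathcal{L}}^{(2m)}$ as well). The only caution is that your paraphrase of that corollary as a ``criterion'' consuming a distinguished form of minimal degree vanishing to order exactly $2$ is your own gloss rather than its literal content; the cited statement applies directly to the points of pairwise intersection of lines in $\mathbb{P}^2$, no three concurrent, which is precisely the linear star configuration hypothesis here, so no further verification is actually needed.
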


We can then derive bounds on some of the values of the symbolic defect
sequence.

\begin{thm}
  Suppose that $I_{2,\mathcal{L}}$ defines a linear star configuration
  in $\mathbb{P}^2$.  Then
  \[\sdef(I_{2,\mathcal{L}},2m) \leq 1+|\mathcal{L}|(m-1) ~~\mbox{for
      all $m\geq 1$.}\]
\end{thm}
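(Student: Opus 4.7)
The plan is to exhibit an explicit generating set for the quotient $I_{2,\mathcal{L}}^{(2m)}/I_{2,\mathcal{L}}^{2m}$ consisting of $1+s(m-1)$ classes, where $s=|\mathcal{L}|$, and to verify it generates via the monomial reduction of Theorem~\ref{thm:GHMN-main}. Combining Theorem~\ref{higherpowers}, Corollary~\ref{cor:general-symb-square}, and Theorem~\ref{pro:star-generators}, one writes
\[
I_{2,\mathcal{L}}^{(2m)} = \bigl(I_{2,\mathcal{L}}^{(2)}\bigr)^{m} = \bigl(I_{2,\mathcal{L}}^{2} + \langle G\rangle\bigr)^{m},
\]
where $G = L_1 \cdots L_s$ is the unique generator of the principal ideal $I_{1,\mathcal{L}}$.

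I would then propose the decomposition
\[
I_{2,\mathcal{L}}^{(2m)} = I_{2,\mathcal{L}}^{2m} + \langle G^{m}\rangle + \sum_{a=1}^{m-1}\sum_{i=1}^{s}\bigl\langle G^{a}(G/L_i)^{2(m-a)}\bigr\rangle,
\]
which yields exactly $1+s(m-1)$ generators beyond $I_{2,\mathcal{L}}^{2m}$. The containment $\supseteq$ is immediate: $G \in I_{2,\mathcal{L}}^{(2)}$ forces $G^{a} \in I_{2,\mathcal{L}}^{(2a)}$, while $(G/L_i)^{2(m-a)}$ is a $2(m-a)$-th power of a minimal generator of $I_{2,\mathcal{L}}$ (Theorem~\ref{pro:star-generators}), hence the product lies in $I_{2,\mathcal{L}}^{(2a)}\cdot I_{2,\mathcal{L}}^{2(m-a)} \subseteq I_{2,\mathcal{L}}^{(2m)}$. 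For the reverse containment, I would invoke Theorem~\ref{thm:GHMN-main} to reduce to the monomial case $\mathcal{L}=\{y_1,\ldots,y_s\}$ in $S=\k[y_1,\ldots,y_s]$ and push the resulting decomposition forward via $\varphi_*$.

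In the monomial setting, $I_{2,\mathcal{L}}^{(2m)} = \bigcap_{i<j}\langle y_i,y_j\rangle^{2m}$, and condition \eqref{eq:1} with $c=2$ says a monomial $y_1^{a_1}\cdots y_s^{a_s}$ belongs to this ideal exactly when $a_i+a_j \geq 2m$ for all $i\neq j$. After sorting the exponents so that $a_1 \leq \cdots \leq a_s$, minimality with respect to divisibility rigidly forces $a_j = 2m - a_1$ for every $j \geq 2$, so the minimal generators split into three families: $(G/y_i)^{2m}$ when $a_1=0$; the forms $y_i^{a}(G/y_i)^{2m-a} = G^{a}(G/y_i)^{2(m-a)}$ for $1 \leq a \leq m-1$ and $i \in \{1,\ldots,s\}$; and $G^{m}$ when $a_1=m$. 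The first family already lies in $I_{2,\mathcal{L}}^{2m}$, while, provided $s \geq 3$, the remaining $1+s(m-1)$ generators do not, as is checked using the standard criterion $\sum_{j} \max(0,2m-a_j) \leq 2m$ for monomial membership in $I_{2,\mathcal{L}}^{2m}$. This establishes the decomposition in the monomial case; applying $\varphi_*$ transfers it to arbitrary linear star configurations and yields the stated bound (with equality when $\mathcal{L}$ consists of variables).

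The main obstacle is the enumeration of minimal monomial generators of $\bigcap_{i<j}\langle y_i,y_j\rangle^{2m}$. Once the combinatorial condition $a_i+a_j \geq 2m$ together with minimality pins down the rigid pattern $a_2=\cdots=a_s=2m-a_1$, the bookkeeping---counting the three families and checking which of them are absorbed into $I_{2,\mathcal{L}}^{2m}$---is routine.
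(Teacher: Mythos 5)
Your proposal is correct, and it arrives at exactly the same generating set as the paper --- your $G^{a}(G/L_i)^{2(m-a)}$, $1\le a\le m-1$, together with $G^m$, are (after reindexing $a\mapsto m-a$) precisely the elements $L^{m-a}(L/L_i)^{2a}$ and $L^m$ that the paper adds to $I_{2,\mathcal{L}}^{2m}$ --- but the hard containment is proved by a genuinely different route. The paper leans on Theorem \ref{higherpowers} to write $I_{2,\mathcal{L}}^{(2m)}=(\langle L\rangle+I_{2,\mathcal{L}}^2)^m$, expands, and then runs an induction showing that each generator $L^{m-a}F_{i_1}\cdots F_{i_{2a}}$ of $\langle L\rangle^{m-a}I_{2,\mathcal{L}}^{2a}$ either has all indices equal (landing in $\langle L\rangle^{m-a}J_{2a}$) or contains two distinct factors whose product absorbs another copy of $L$, pushing it down to an earlier term. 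You instead pass through Theorem \ref{thm:GHMN-main} to $S=\k[y_1,\dots,y_s]$ and enumerate the minimal monomial generators of $\bigcap_{i<j}\langle y_i,y_j\rangle^{2m}$ outright; your combinatorial analysis (sorted exponents must satisfy $a_j=2m-a_1$ for $j\ge 2$, with $0\le a_1\le m$) is correct and complete, and the membership criterion $\sum_j\max(0,2m-a_j)\le 2m$ for $I_{2,\mathcal{L}}^{2m}$ is the right one. Your approach buys two things: it makes Theorem \ref{higherpowers} unnecessary for the reverse containment (the intersection is computed directly), and it shows the $1+s(m-1)$ classes are genuinely minimal generators of the quotient in the monomial setting, so the bound is sharp there; it would also adapt to odd symbolic powers, which the paper's even-power factorization cannot reach. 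What it costs is the extra combinatorial bookkeeping and the reliance on the GHMN transfer principle, whereas the paper's induction stays entirely inside the ring $R$. For the upper bound as stated, note you do not even need the non-membership check in $I_{2,\mathcal{L}}^{2m}$; that is only needed for your parenthetical equality claim.
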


\begin{proof}
  Suppose that $\mathcal{L} = \{L_1,\ldots,L_s\}$.  By Corollary
  \ref{cor:general-symb-square} we have
  \[I_{2,\mathcal{L}}^{(2)} = \langle L_1\cdots L_s \rangle +
    I_{2,\mathcal{L}}^2\] since
  $I_{1,\mathcal{L}} = \langle L_1\cdots L_s \rangle$.  Let
  $L = L_1\cdots L_s$.  It then follows by Theorem \ref{higherpowers}
  that
  \begin{eqnarray*}
    I_{2,\mathcal{L}}^{(2m)} & = & \left[\langle L \rangle + I_{2,\mathcal{L}}^2\right]^m \\
                             & = & \langle L \rangle^m + \langle L \rangle^{m-1}I_{2,\mathcal{L}}^2
                                   +  \langle L \rangle^{m-2}I_{2,\mathcal{L}}^4
                                   + \cdots +  \langle L \rangle^{1}I_{2,\mathcal{L}}^{2m-2} +  I_{2,\mathcal{L}}^{2m}.
  \end{eqnarray*}
  Since $I_{2,\mathcal{L}}$ is generated by forms of degree $(s-1)$,
  we can use a a degree argument to show that none of the generators
  of
  $\langle L \rangle^m + \langle L \rangle^{m-1}I_{2,\mathcal{L}}^2 +
  \langle L \rangle^{m-2}I_{2,\mathcal{L}}^4 + \cdots + \langle L
  \rangle^{1}I_{2,\mathcal{L}}^{2m-2}$ belong to
  $I_{2,\mathcal{L}}^{2m}$.

  Define
    $J_{2a} = \langle \frac{L^{2a}}{L_i^{2a}} ~|~ i=1,\ldots,s
    \rangle$ for $a = 1,\ldots,m-1$.  We claim that for
    $1\le a \le m-1$,
    \begin{multline*}
      \langle L \rangle^m + \langle L \rangle^{m-1}I_{2,\mathcal{L}}^2
      + \cdots + \langle L \rangle^{m-a+1}I_{2,\mathcal{L}}^{2(a-1)} +
      \langle L \rangle^{m-a}I_{2,\mathcal{L}}^{2a}
      \\
      = \langle L \rangle^m + \langle L \rangle^{m-1}J_2 + \cdots +
      \langle L \rangle^{m-a+1}J_{a-1} + \langle L
      \rangle^{m-a}I_{2,\mathcal{L}}^{2a}.
    \end{multline*}
    Indeed, the ideal on the right is contained in the ideal on the
    left because each generator of $J_{2a}$ is a generator of
    $I_{2,\mathcal{L}}^{2a}$.

    For the reverse containment, we do induction on $a$.  It is
    straightforward to check that
    $\langle L \rangle^m + \langle L \rangle^{m-1}I_{2,\mathcal{L}}^2
    = \langle L \rangle^m + \langle L \rangle^{m-1}J_2$ for the base
    case.  Assume now that {$2 \leq a \le m-1$}.  By induction on $a$,
    \begin{multline*}
      \langle L \rangle^m + \langle L \rangle^{m-1}I_{2,\mathcal{L}}^2
      + \cdots + \langle L \rangle^{m-a+1}I_{2,\mathcal{L}}^{2(a-1)} =
      \langle L \rangle^m + \langle L \rangle^{m-1}J_2 + \cdots +
      \langle L \rangle^{m-(a-1)}J_{2(a-1)}.
    \end{multline*}
    To finish the proof of the claim, we need to show that
    \[
      \langle L \rangle^{m-a}I_{2,\mathcal{L}}^{2a} \subseteq \langle
      L \rangle^m + \langle L \rangle^{m-1}J_2 + \cdots + \langle L
      \rangle^{m-a+1}J_{2(a-1)} + \langle L \rangle^{m-a}J_{2a} .\]
    Because of Theorem \ref{pro:star-generators}, $I_{2,\mathcal{L}}$
    is generated by elements of the form $F_i = L/L_i$ for some
    $i=1,\ldots,s$.  So, a generator of $I_{2,\mathcal{L}}^{2a}$ has
    the form $F_{i_1}F_{i_2}\cdots F_{i_{2a}}$ where
    $i_1,\ldots,i_{2a}$ need not be distinct.  If
    $i_1 = \cdots = i_{2a} = i$, then the generator
    $F_{i_1}F_{i_2}\cdots F_{i_{2a}} = \frac{L^{2a}}{L_i^{2a}}$ of
    $I_{2,\mathcal{L}}^{2a}$ is also a generator of $J_{2a}$, so $L^{m-a}F_{i_1}F_{i_2}\cdots F_{i_{2a}} \in \langle L \rangle^{m-a}J_{2a}$.  If at least two of $i_1,\ldots,i_{2a}$ are distinct, say $i_1 \neq i_2$, then
\[F_{i_1}F_{i_2}\cdots F_{i_{2a}} = F_{i_1}L_{i_1}\frac{F_{i_2}}{L_{i_1}}F_{i_3} \cdots F_{i_{2a}}
= L\frac{F_{i_2}}{L_{i_1}}F_{i_3} \cdots F_{i_{2a}}.\]
But then 
\[
L^{m-a}F_{i_1}F_{i_2}\cdots F_{i_{2a}} = 
L^{m-a+1} \frac{F_{i_2}}{L_{i_1}}F_{i_3} \cdots F_{i_{2a}} \in 
\langle L \rangle^{m-(a-1)}J_{2(a-1)}.
\]
By induction, we then have 
$$
L^{m-a}F_{i_1}F_{i_2}\cdots F_{i_{2a}} \in \langle L \rangle^m + \langle L \rangle^{m-1}J_2   
+ \cdots + 
\langle L \rangle^{m-a+1}J_{2(a-1)} +
\langle L \rangle^{m-a}J_{2a}.
$$
This now verifies the claim.

To complete the proof, note that to form
$I_{c,\mathcal{L}}^{(2m)}$, we can add all of the generators
of $\langle L \rangle^m + \langle L \rangle^{m-1}J_2
+ \cdots +  \langle L \rangle^{1}J_{2m-2}$ to $I_{2,\mathcal{L}}^{2m}$.  
This ideal has at most
$1+s(m-1)$ minimal generators (our generating set may not be minimal)
since each ideal $J_{2a}$ has $s$ generators,
so $\sdef(I_{2,\mathcal{L}},2m) \leq 1 + s(m-1)$.
\end{proof}


\section{A geometric consequence}

By Theorem \ref{sdefect-one-classify},
if $I_{c,\mathcal{L}}$ is
a linear star configuration in
$\mathbb{P}^n$ of codimension two, then $\sdef(I_{c,\mathcal{L}},2) =1$ since $c =2$.
If $n =2$, then  the linear star configuration 
defined by $I_{c,\mathcal{L}}$ is a collection of points in $\mathbb{P}^2$,
and thus, there exist sets of points $\X$ in $\mathbb{P}^2$
with $\sdef(I_\X,2) =1$.
In general, it would be interesting to classify all the ideals $I_\X$
of sets of points $\X$ in $\mathbb{P}^2$ with $\sdef(I_\X,2) =1$.
In this section, we show under some additional hypotheses, that
if $\X$ is a set of points in $\mathbb{P}^2$ with $\sdef(I_\X,2) =1$,
then $\X$ must be a linear star configuration.

We first recall some facts about the defining ideals of points in
$\mathbb{P}^2$; many of these results are probably known to the
experts, but for completeness, we include their proofs.  Recall that
for any homogeneous ideal $I \subseteq R$, we let
$\alpha(I) = \min\{i ~|~ I_i \neq 0 \}$.  Note that for any
$m \geq 1$, $\alpha(I^m) = m \alpha(I)$.

The following is the so-called {\em Dubreil's inequality}  
(see \cite{Ca, DGM}),  but an elementary proof (which we now give) 
is also possible.

\begin{lem}\label{genslem}
Let $\mathbb{X} \subseteq \mathbb{P}^2$ be a finite set of points.
If $\alpha=\alpha(I_{\mathbb{X}})$, then $I_{\mathbb{X}}$ has 
at most $\alpha+1$ minimal generators of degree $\alpha$.
\end{lem}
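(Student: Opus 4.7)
The plan is to translate the statement about the number of degree-$\alpha$ minimal generators into a statement about the Hilbert function of $R/I_{\mathbb{X}}$. Since $\alpha(I_{\mathbb{X}}) = \alpha$ is the initial degree, every element of $(I_{\mathbb{X}})_\alpha$ is automatically a minimal generator (any $k$-linear relation among degree-$\alpha$ elements would force them to be linearly dependent, contradicting minimality). So the number of minimal generators of $I_{\mathbb{X}}$ of degree $\alpha$ equals $\dim_{\k} (I_{\mathbb{X}})_\alpha$, and it suffices to show
\[
\dim_{\k} (I_{\mathbb{X}})_\alpha \;\leq\; \alpha+1.
\]

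For $i < \alpha$ we have $(I_{\mathbb{X}})_i = 0$, so $\H_{R/I_{\mathbb{X}}}(i) = \dim_{\k} R_i = \binom{i+2}{2}$ for $0 \leq i \leq \alpha-1$; in particular $\H_{R/I_{\mathbb{X}}}(\alpha-1) = \binom{\alpha+1}{2}$. The next step, which is the real content of the argument, is to observe that $\H_{R/I_{\mathbb{X}}}$ is non-decreasing. This follows from the fact that $R/I_{\mathbb{X}}$ is a one-dimensional Cohen--Macaulay ring (the coordinate ring of a zero-dimensional scheme in $\mathbb{P}^2$): a general linear form $\ell$ is a non-zero-divisor on $R/I_{\mathbb{X}}$, so multiplication by $\ell$ gives an injection $(R/I_{\mathbb{X}})_{i-1} \hookrightarrow (R/I_{\mathbb{X}})_i$. (If the ground field is finite we may extend scalars without altering the Hilbert function.) Thus
\[
\H_{R/I_{\mathbb{X}}}(\alpha) \;\geq\; \H_{R/I_{\mathbb{X}}}(\alpha-1) \;=\; \binom{\alpha+1}{2}.
\]

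Combining these,
\[
\dim_{\k}(I_{\mathbb{X}})_\alpha \;=\; \binom{\alpha+2}{2} - \H_{R/I_{\mathbb{X}}}(\alpha)
\;\leq\; \binom{\alpha+2}{2} - \binom{\alpha+1}{2} \;=\; \alpha+1,
\]
which is the desired bound.

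The proof has essentially no obstacle; the one subtlety is the existence of a linear non-zero-divisor on $R/I_{\mathbb{X}}$, which may require passing to an infinite extension of $\k$ (the Hilbert function is invariant under such an extension, so this is harmless). Everything else is bookkeeping with Hilbert functions.
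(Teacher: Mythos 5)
Your proof is correct and follows essentially the same route as the paper's: both reduce the claim to the fact that $\H_{R/I_{\mathbb{X}}}$ is non-decreasing, together with $\H_{R/I_{\mathbb{X}}}(\alpha-1)=\binom{\alpha+1}{2}$ and the identification of the number of degree-$\alpha$ minimal generators with $\dim_\k(I_{\mathbb{X}})_\alpha$. The only difference is that the paper cites \cite{GM} for the monotonicity while you supply the standard non-zero-divisor argument for it; this is a matter of exposition, not of approach.
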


\begin{proof}
Because $\alpha = \alpha(I_{\mathbb{X}})$, the Hilbert function
of $\mathbb{X}$ at $\alpha-1$ is 
${\bf H}_{R/I_\X}(\alpha-1) = \dim_\k R_{\alpha-1} = \binom{\alpha+1}{2}$.
If $I_{\mathbb{X}}$ has $d > \alpha+1$ generators of degree 
$\alpha$, then ${\bf H}_{R/I_\X}(\alpha) = \binom{\alpha+2}{2} - d <
\binom{\alpha+2}{2} - (\alpha+1) = \binom{\alpha+1}{2}$.
In other words,  ${\bf H}_{R/I_\X}(\alpha-1) > \H_{R/I_\X}(\alpha)$,
contradicting the fact that the Hilbert functions of sets of points
must be non-decreasing functions \cite[cf.~proof of Proposition 1.1 (2)]{GM}.
\end{proof}

The next lemma is a classification of those sets of points
which have exactly $\alpha+1$ minimal generators of degree $\alpha$.

\begin{lem}\label{equivconditions}
Let $\X$ be a set of points of $\mathbb{P}^2$. Then the following are 
equivalent:
\begin{enumerate}
\item[$(i)$] The ideal $I_\X$ has $\alpha+1$ minimal generators of degree
$\alpha = \alpha(I_{\mathbb{X}})$;
\item[$(ii)$] The set $\X$ is a set of $\binom{\alpha+1}{2}$ points in 
$\P^2$ having generic Hilbert function, i.e.,
\[{\bf H}_{R/I_\X}(i) = \min\{\dim_\k R_i,|\X|\} ~~~\mbox{for all $i \geq 0$; and}\]
\item[$(iii)$] The ideal $I_\X$ has a graded linear resolution.
\end{enumerate}
\end{lem}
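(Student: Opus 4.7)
The plan is to establish the cycle $(i) \Rightarrow (ii) \Rightarrow (iii) \Rightarrow (i)$.

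For $(i) \Rightarrow (ii)$, I would redo the Hilbert function count that appears in the proof of Lemma \ref{genslem}: assuming $I_\X$ has exactly $\alpha+1$ minimal generators in degree $\alpha$ gives $\H_{R/I_\X}(\alpha) = \binom{\alpha+2}{2} - (\alpha+1) = \binom{\alpha+1}{2}$, while the hypothesis $\alpha(I_\X) = \alpha$ forces $\H_{R/I_\X}(\alpha-1) = \binom{\alpha+1}{2}$. Passing to a general Artinian reduction of the one-dimensional Cohen-Macaulay ring $R/I_\X$, two consecutive equal values of $\H_{R/I_\X}$ force the first difference to vanish from degree $\alpha$ onward, so $|\X| = \binom{\alpha+1}{2}$ and the full Hilbert function matches the generic one $\min\{\binom{i+2}{2},\binom{\alpha+1}{2}\}$.

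For $(ii) \Rightarrow (iii)$, the first difference of the generic Hilbert function (the $h$-vector of $R/I_\X$) is $(1,2,3,\ldots,\alpha,0,0,\ldots)$. Since $R/I_\X$ is one-dimensional Cohen-Macaulay, its Castelnuovo-Mumford regularity equals the last nonzero position of the $h$-vector, so $\reg(R/I_\X) = \alpha - 1$ and $\reg(I_\X) = \alpha$. The resulting bound $\beta_{i,i+j}(I_\X) = 0$ for $j > \alpha$ allows generators only in degree $\leq \alpha$ and first syzygies only in degree $\leq \alpha+1$. Since generators must have degree $\geq \alpha(I_\X) = \alpha$, and since a minimal first syzygy must have strictly larger degree than some generator it involves (its coefficients lie in the maximal ideal), the resolution is forced into the linear shape
\[
0 \to R(-\alpha-1)^{b} \to R(-\alpha)^{a} \to I_\X \to 0.
\]

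For $(iii) \Rightarrow (i)$, I would apply the Hilbert-Burch theorem: because $I_\X$ is a perfect height-two ideal, $b = a-1$, and $I_\X$ is generated by the $(a-1)\times(a-1)$ minors of an $(a-1)\times a$ matrix whose entries must be linear forms by the linearity of the resolution. Each such minor is homogeneous of degree $a-1$, and equating to the known generator degree $\alpha$ gives $a = \alpha+1$.

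The step that I expect to be the main obstacle is $(ii) \Rightarrow (iii)$, since it requires invoking the regularity formula for one-dimensional Cohen-Macaulay quotients and then cleanly translating the resulting regularity bound into the linearity of the resolution. The other two implications reduce to Hilbert function bookkeeping and a direct appeal to Hilbert-Burch.
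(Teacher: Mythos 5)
Your proof is correct, and while the overall skeleton $(i)\Rightarrow(ii)\Rightarrow(iii)\Rightarrow(i)$ and the Hilbert-function count in the first step coincide with the paper's, the other two implications take genuinely different routes. For $(i)\Rightarrow(ii)$ the paper simply cites the fact that the Hilbert function of points is strictly increasing until it stabilizes; your Artinian-reduction justification (once the first difference hits $0$ it stays $0$ because the reduction is standard graded) is exactly the standard proof of that fact, so this step is the same in substance. For $(ii)\Rightarrow(iii)$ the paper appeals to Lorenzini's computation of Betti numbers of points with generic Hilbert function, whereas you derive $\reg(R/I_\X)=\alpha-1$ from the $h$-vector $(1,2,\ldots,\alpha)$ of the one-dimensional Cohen--Macaulay ring $R/I_\X$ and then squeeze the Betti table between the regularity bound and the initial degree; this is self-contained and avoids the external reference, at the cost of needing the (standard, but worth stating) facts that regularity is preserved under a linear Artinian reduction and that $\operatorname{pd}(I_\X)=1$. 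For $(iii)\Rightarrow(i)$ the paper compares $\H_{R/I_\X}(t)$ with $\H_{R/I_\X}(t+1)$ for $t\gg 0$ using the resolution to force $\beta=\alpha+1$, while you invoke Hilbert--Burch to realize $I_\X$ as the maximal minors of an $(\alpha+1)\times\alpha$ matrix of linear forms and read off the generator count from the degree of the minors; both are valid, and yours has the small bonus of exhibiting the determinantal structure, though you should note that the rank identity $b=a-1$ you use is itself extracted from exactness of the resolution (or from the same leading-term cancellation the paper performs). In short: correct, with two of the three implications argued by different (and arguably more structural) means.
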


\begin{proof} 
$(i) \Rightarrow (ii)$ If $I_\X$ has $\alpha+1$ minimal generators of degree $\alpha$, it follows that
\[\binom{\alpha+1}{2} = \H_{R/I_\X}(\alpha-1) = \H_{R/I_\X}(\alpha) = \binom{\alpha+2}{2} - 
\binom{\alpha+1}{1}.\]
Because the Hilbert function of a set of points in $\mathbb{P}^2$ is a strictly increasing
function until it reaches $|\X|$, we have $|\X| = \binom{\alpha+1}{2}$, and 
the Hilbert function of $R/I_\X$ is given by
\[\H_{R/I_\X}(t) = \min\left\{\dim_\k R_t, \binom{\alpha+1}{2}\right\}  ~~\mbox{for all $t \geq 0$}.\]

$(ii) \Rightarrow (iii)$
If $R/I_\X$ has the generic Hilbert function, one can use
Section 3 of \cite{L} to deduce that the resolution is 
\[
0 \longrightarrow  R^\alpha(-(\alpha+1)) \longrightarrow 
R^{\alpha+1}(-\alpha) \longrightarrow R  \longrightarrow  R/I_\X \longrightarrow  0,\]
i.e., the graded resolution is linear.

$(iii) \Rightarrow (i)$
Assume that $I_\X$ has a linear graded free resolution 
\[
0 \longrightarrow  R^{\beta-1}(-(\alpha+1)) \longrightarrow 
R^{\beta}(-\alpha) \longrightarrow R  \longrightarrow  R/I_\X \longrightarrow  0.\]
Since $\H_{R/I_\X}(t)=\H_{R/I_\X}(t+1)$ for $t \gg 0$,
we get that
\begin{multline*}
\dim_\k R_t-\beta \dim_\k R_{t-\alpha}+(\beta-1) \dim_\k R_{t-(\alpha+1)} \\
= \dim_\k R_{t+1}-\beta \dim_\k R_{(t+1)-\alpha}+(\beta-1)\dim_\k R_{(t+1)-(\alpha+1)}.  
\end{multline*}
This proves that $\beta=\alpha+1$, i.e., $I_\X$ has $\alpha+1$
minimal generators of degree $\alpha$.
\end{proof} 

\begin{lem}\label{degreebound}
Let $\X$ be a set of points of $\mathbb{P}^2$, and 
suppose that any of the three equivalent conditions
of Lemma \ref{equivconditions} holds.
If $I_\X^{(2)} = I_\X^2 + \langle F_1,\ldots,F_r \rangle$, i.e., 
the $F_i$ comprise a minimal set of homogeneous generators of $I_\X^{(2)}$ modulo $I_\X^2$, 
then $\deg (F_i)<2\alpha(I_\X)$ for all $i=1,\ldots,r$.
\end{lem}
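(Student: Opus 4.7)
The plan is to prove the stronger statement that $(I_\X^{(2)})_t = (I_\X^2)_t$ for every $t \geq 2\alpha$. This makes $I_\X^{(2)}/I_\X^2$ vanish in all degrees $\geq 2\alpha$, which forces every minimal generator (and in particular every $F_i$) to sit in a degree strictly less than $2\alpha$.

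Using Lemma \ref{equivconditions}, I would write $I_\X = \langle g_0, \ldots, g_\alpha \rangle$ with each $g_i$ of degree $\alpha$, and let $M = (m_{ij})$ denote the $(\alpha+1)\times\alpha$ Hilbert-Burch matrix of linear forms, so that $\sum_i m_{ij} g_i = 0$ for each column $j$. Given $F \in (I_\X^{(2)})_t$ with $t \geq 2\alpha$, first express $F = \sum_i h_i g_i$ with $h_i \in R_{t-\alpha}$. The requirement that $F$ vanish to order two at every $P \in \X$ is equivalent (since $g_i(P) = 0$) to the identity $\sum_i h_i(P)\,\overline{g_i} = 0$ in the two-dimensional cotangent space $\mathfrak{m}_P/\mathfrak{m}_P^2$. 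I would then verify the key local linear-algebra fact
\[
\ker\Bigl(k^{\alpha+1} \xrightarrow{(c_i)\mapsto \sum c_i \overline{g_i}} \mathfrak{m}_P/\mathfrak{m}_P^2\Bigr) = \mathrm{Im}\,M(P),
\]
which holds because the columns of $M(P)$ lie in the kernel (Taylor-expand $\sum_i m_{ij} g_i = 0$ about $P$), both sides have dimension $\alpha-1$, and the map is surjective since the $g_i$ generate $I_\X$ locally at $P$. Thus the pointwise constraint at each $P$ becomes $\bigl(h_i(P)\bigr)_i \in \mathrm{Im}\,M(P)$.

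The technical heart of the proof is to promote these pointwise conditions to global forms: find $a_1, \ldots, a_\alpha \in R_{t-\alpha-1}$ satisfying $\sum_j m_{ij}(P) a_j(P) = h_i(P)$ for every $P \in \X$ and every $i$. For each $P$ the preceding paragraph supplies a vector $(a_j(P))_j \in k^\alpha$ with $M(P)\bigl(a_j(P)\bigr) = \bigl(h_i(P)\bigr)$, and one then has to interpolate these prescribed values by a single form of degree $t - \alpha - 1$. This interpolation step is the main obstacle, and it is exactly where the generic-Hilbert-function hypothesis enters: since $\H_{R/I_\X}(d) = |\X|$ for all $d \geq \alpha - 1$, the evaluation map $R_d \to k^{|\X|}$ is surjective in every such degree. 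The bound $t \geq 2\alpha$ translates precisely to $t-\alpha-1 \geq \alpha-1$, so the interpolation succeeds.

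Once the $a_j$ have been produced, the forms $h_i' := h_i - \sum_j m_{ij} a_j \in R_{t-\alpha}$ vanish at every point of $\X$ and therefore lie in $I_\X$; applying the Hilbert-Burch relations gives
\[
F = \sum_i h_i g_i = \sum_i h_i' g_i + \sum_j a_j\Bigl(\sum_i m_{ij} g_i\Bigr) = \sum_i h_i' g_i \in I_\X \cdot I_\X = I_\X^2.
\]
Thus $(I_\X^{(2)})_t \subseteq (I_\X^2)_t$, and together with the automatic reverse containment this yields equality for all $t \geq 2\alpha$, completing the proof.
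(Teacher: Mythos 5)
Your argument is correct, but it takes a genuinely different route from the paper's. Both proofs come down to the same key fact, namely that $(I_\X^{(2)})_t=(I_\X^{2})_t$ for all $t\ge 2\alpha$, so that the quotient $I_\X^{(2)}/I_\X^{2}$ lives entirely in degrees below $2\alpha$. The paper gets this from Castelnuovo--Mumford regularity in three lines: since $I_\X^{(2)}$ is the saturation of $I_\X^{2}$, the two ideals agree in degrees $\ge\reg(I_\X^{2})$, and the bound $\reg(I_\X^{2})\le 2\reg(I_\X)$ of \cite{GGP}, combined with $\reg(I_\X)=\alpha$ coming from the linear resolution, gives $\reg(I_\X^{2})=2\alpha$. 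You instead prove the statement by hand from the Hilbert--Burch presentation: your ``key local linear-algebra fact'' is exactly the right-exactness of $-\otimes k(P)$ applied to the localized presentation $R_P^{\alpha}\xrightarrow{M}R_P^{\alpha+1}\to(I_\X)_P\to 0$, which yields the exact sequence $k^{\alpha}\xrightarrow{M(P)}k^{\alpha+1}\to\mathfrak{m}_P/\mathfrak{m}_P^2\to 0$ and hence both the identity $\ker=\operatorname{Im}M(P)$ and the rank count $\operatorname{rank}M(P)=\alpha-1$ (your sketch only explicitly justifies the kernel side, so this is worth one extra sentence in a full write-up). The interpolation step is then correctly placed: surjectivity of $R_d\to k^{|\X|}$ for $d\ge\alpha-1$ is precisely the generic Hilbert function hypothesis, and $t\ge 2\alpha$ translates to $t-\alpha-1\ge\alpha-1$. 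The trade-off is clear: the paper's proof is shorter but imports the regularity machinery of \cite{GGP} and the saturation-degree bound; yours is longer but elementary and self-contained, and it makes visible exactly where each hypothesis of Lemma \ref{equivconditions} is used. (In both treatments, the final passage from ``the quotient vanishes in degrees $\ge 2\alpha$'' to ``$\deg F_i<2\alpha$'' implicitly assumes the $F_i$ are non-redundant modulo $I_\X^{2}$, which is how the lemma is applied later.)
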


\begin{proof}  We first observe that because $I_\X$ is an ideal of points,            then the saturation of $I_\X^2$ is $I_\X^{(2)}$.  If $d$ is
  the saturation degree of $I_\X^2$, i.e., the smallest integer $d$ such that $(I_\X^{(2)})_t = (I_\X^2)_t$ for all $t \geq d$, then
  it is known that ${\rm reg}(I_\X^2) \geq d$ (see, for example, the
  introduction of \cite{BG}).

  Again, because $I_\X$ is an ideal of points, we have
$$
2\reg(I_\X)\geq \reg(I_\X^2)\geq \alpha(I_\X^2) =2\alpha(I_\X)=2\reg(I_\X),
$$
where the first inequality follows from \cite[Theorem 1.1]{GGP}, and the
last equality holds from the fact that $I_\X$ has a linear resolution.
Thus, we get that
$\reg(I_\X^2)=2\alpha(I_\X)$, or in
other words, $I_\X^2$ and $I_\X^{(2)}$ agree in degrees 
$\geq \reg(I_\X^2)=2\alpha(I_\X)$. Therefore, any minimal generators of 
$I_\X^{(2)}$ have degrees less than $2\alpha(I_\X)=2\alpha$, as we wished. 
\end{proof}

When $I_\X$ is the homogeneous ideal of a finite set of points 
$\X$ in $\mathbb{P}^2$, it is well known that $I_\X$ is both perfect
and has codimension two.   In addition, $I_\X$ is a generic complete
intersection because $I_\X$ is a radical ideal in a regular 
ring and the minimal associated primes of $I_\X$ are simply
the ideals of the points $P \in \X$, and when we localize $I_\X$ at $I_P$,
we get the maximal ideal of $\k[x_0,x_1,x_2]$ localized at $I_P$,
which is a complete intersection.
We can thus apply Theorem \ref{resix2} to any homogeneous
ideal of a finite set of points in $\mathbb{P}^2$.  In particular,
we record this fact as a lemma.

\begin{lem}\label{nummingens}
Let $\X \subseteq \mathbb{P}^2$ be a finite set of points.
Suppose that $I_\X$ has $d$ minimal generators of degree
$\alpha = \alpha(I_\X)$.  Then $I_\X^2$ has $\binom{d+1}{2}$ minimal generators
of degree $\alpha(I_\X^2) = 2\alpha$.  In particular,
\[\H_{R/I_\X^2}(2\alpha) = \binom{2\alpha+2}{2} - \binom{d+1}{2}.\]
\end{lem}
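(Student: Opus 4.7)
The plan is to leverage Theorem \ref{resix2} directly. As noted in the paragraph preceding the lemma, $I_\X$ is perfect, has codimension two, and is a generic complete intersection, so the hypotheses of that theorem are satisfied. Hence if
\[
0 \longrightarrow F \longrightarrow G \longrightarrow I_\X \longrightarrow 0
\]
is the graded minimal free resolution of $I_\X$, then
\[
0 \longrightarrow \bigwedge^2 F \longrightarrow F \otimes G \longrightarrow \mathrm{Sym}^2 G \longrightarrow I_\X^2 \longrightarrow 0
\]
is the graded minimal free resolution of $I_\X^2$. The whole proof amounts to reading off what happens in degree $2\alpha$.

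The first step is to record the degrees of the summands of $F$ and $G$. By hypothesis, $G$ has exactly $d$ generators of degree $\alpha$, together with possibly other generators of degree strictly larger than $\alpha$. Because the resolution is minimal, every generator of $F$ maps into $G$ via a matrix with entries in the maximal ideal, so every generator of $F$ has degree at least $\alpha+1$. Consequently the minimum internal degrees of $F\otimes G$ and $\bigwedge^2 F$ are at least $2\alpha+1$ and $2\alpha+2$, respectively, while the degree-$2\alpha$ strand of $\mathrm{Sym}^2 G$ is exactly the $\k$-span of the products $g_ig_j$ with $1\le i\le j\le d$, where $g_1,\dots,g_d$ are the $d$ generators of $G$ of degree $\alpha$.

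Now I would feed this into the resolution. Since $(\bigwedge^2 F)_{2\alpha} = (F\otimes G)_{2\alpha} = 0$, the sequence yields
\[
(I_\X^2)_{2\alpha} \;\cong\; (\mathrm{Sym}^2 G)_{2\alpha},
\]
which has dimension $\binom{d+1}{2}$. Since $\alpha(I_\X^2) = 2\alpha(I_\X) = 2\alpha$, we have $(I_\X^2)_{t} = 0$ for $t<2\alpha$, so every element of $(I_\X^2)_{2\alpha}$ must be a minimal generator of $I_\X^2$ in degree $2\alpha$. This gives exactly $\binom{d+1}{2}$ minimal generators of $I_\X^2$ in degree $2\alpha$, as claimed.

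Finally, the Hilbert function identity is an immediate consequence: $\H_{R/I_\X^2}(2\alpha) = \dim_\k R_{2\alpha} - \dim_\k (I_\X^2)_{2\alpha} = \binom{2\alpha+2}{2} - \binom{d+1}{2}$. The only subtle point in the argument — and the one I would highlight carefully — is the minimality-based degree estimate forcing the vanishing of $(F\otimes G)_{2\alpha}$ and $(\bigwedge^2 F)_{2\alpha}$; once that is in hand the lemma follows from the resolution by counting dimensions.
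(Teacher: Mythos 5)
Your proof is correct and takes essentially the same approach as the paper: both invoke Theorem \ref{resix2} to identify the degree-$2\alpha$ generators of $I_\X^2$ with the pairwise products of the degree-$\alpha$ generators of $I_\X$, and then use $\alpha(I_\X^2)=2\alpha$ to read off the Hilbert function. Your version merely spells out the degree bookkeeping in the resolution (vanishing of $(F\otimes G)_{2\alpha}$ and $(\bigwedge^2 F)_{2\alpha}$) a bit more explicitly than the paper does.
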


\begin{proof}
If $F_1,\ldots,F_d$ are the $d$ minimal generators of degree $\alpha =
\alpha(I_\X)$,
then by Theorem \ref{resix2},  the elements of 
$\{F_iF_j ~\mid~ 1 \leq i \leq j \leq d \}$
will all be minimal generators of $I_\X^2$.  Each generator will have
degree $\alpha(I_\X^2) = 2\alpha$ and there are 
$\binom{d+1}{2}$ such generators.  For the last statement,
since $I_\X^2$ has no generators of degree $< 2\alpha$, we have
$\dim_\k (I_\X^2)_{2\alpha} = \binom{d+1}{2}$.  
\end{proof}

We also require a result
of Bocci and Chiantini.  Statement $(i)$ can be found in the
introduction of \cite{BC:1}, while $(ii)$ is \cite[Theorem 3.3]{BC:1}.

\begin{thm}\label{BCresults}
Let $\X \subseteq \mathbb{P}^2$ be a set of points. 
\begin{enumerate}
\item[$(i)$] Then  $\alpha(I_{\mathbb{X}}^{(2)}) \geq \alpha(I_{\mathbb{X}})+1$.
\item[$(ii)$] If
$\alpha(I_\X^{(2)}) = \alpha(I_\X) +1$, then $\X$ is a linear
star configuration of points or a set of colinear points.
\end{enumerate}
\end{thm}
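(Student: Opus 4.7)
The plan for part $(i)$ is a short infinitesimal argument. Suppose for contradiction that a nonzero $F \in (I_\X^{(2)})_\alpha$ exists. Because $F$ vanishes to order at least $2$ at every $P \in \X$, each of the three partial derivatives $\partial F/\partial x_i$ vanishes at every point of $\X$, hence lies in $I_\X$. Their degrees equal $\alpha - 1 < \alpha(I_\X)$, so they must all be identically zero. Euler's identity $\alpha F = \sum_i x_i\, \partial F/\partial x_i = 0$ then forces $F = 0$ (working in characteristic zero), contradicting $\deg F = \alpha \geq 1$.

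For part $(ii)$, let $F$ be a nonzero form of degree $\alpha + 1$ in $I_\X^{(2)}$, which exists by hypothesis. The same differentiation argument shows that $F_i := \partial F / \partial x_i \in (I_\X)_\alpha$ for $i = 0, 1, 2$. My strategy is to prove that $F$ factors as a product of $\alpha + 1$ linear forms and then identify $\X$ with the singular locus of the resulting line arrangement. The crucial numerical input is the lower bound $|\X| \geq \binom{\alpha+1}{2}$, which follows from $\H_{R/I_\X}(\alpha - 1) = \binom{\alpha+1}{2}$ together with the monotonicity of the Hilbert function of a finite set of points. If $F = F_1 \cdots F_k$ had an irreducible factor of degree $d_i \geq 2$, a genus-degree computation bounds the number of singular points of $V(F)$ by $\binom{\alpha+1}{2} - \sum_i (d_i - 1) < \binom{\alpha+1}{2}$, contradicting $\X \subseteq \mathrm{Sing}(V(F))$. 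Hence $F = L_0 L_1 \cdots L_\alpha$ splits into linear forms.

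If the $L_i$ are pairwise distinct, then $\mathrm{Sing}(V(F))$ consists of the pairwise intersections of the lines, at most $\binom{\alpha+1}{2}$ in number with equality precisely when no three are concurrent. Since $\X \subseteq \mathrm{Sing}(V(F))$ and $|\X| \geq \binom{\alpha+1}{2}$, equality must hold throughout, and $\X$ is exactly the linear star configuration defined by $\mathcal{L} = \{L_0, \ldots, L_\alpha\}$. If some $L_i$ are repeated, say $F = L^2 G$, then at each $P \in \X$ not on $V(L)$, the form $G$ itself must vanish to order two; applying part $(i)$ to the subset $\X \setminus V(L)$ forces that subset to be empty, so $\X$ is contained in the single line $V(L)$ and is therefore collinear.

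The main obstacle is the irreducibility step of $(ii)$: converting the hypothesis that $V(F)$ is singular at many points into a genus-degree bound sharp enough to rule out every non-linear irreducible factor, and then verifying the combinatorial genericity of the line arrangement (pairwise distinct lines, no three concurrent) required to realize $\X$ as a genuine linear star configuration rather than a degeneration. A subsidiary delicate point is the handling of repeated factors, where one must iterate part $(i)$ on a sub-configuration rather than re-derive a fresh global bound.
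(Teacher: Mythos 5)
A preliminary remark: the paper does not actually prove this theorem; both parts are imported from Bocci and Chiantini \cite{BC:1} (part $(i)$ from their introduction, part $(ii)$ as their Theorem 3.3), so there is no in-paper proof to compare against and your proposal must stand on its own. Your argument for $(i)$ is the standard one and is correct, with the caveat that the Euler-identity step requires the characteristic not to divide $\alpha$ (e.g.\ characteristic zero).

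Part $(ii)$ has two genuine gaps. The first is in the factorization step. The bound $|\mathrm{Sing}(V(F))|\le \binom{\alpha+1}{2}-\sum_i(d_i-1)$ is obtained by summing $|\mathrm{Sing}(C_i)|\le\binom{d_i-1}{2}$ over the irreducible components and adding the B\'ezout bounds $d_id_j$ for the pairwise intersections; it is therefore valid only when $F$ is \emph{reduced}. If $F$ has any repeated irreducible factor $G$, then $\mathrm{Sing}(V(F))\supseteq V(G)$ is infinite and the inequality gives no contradiction with $|\X|\ge\binom{\alpha+1}{2}$. Your case analysis treats repeated factors only \emph{after} having ``concluded'' that every irreducible factor is linear, but that conclusion was derived under the tacit assumption that there are no repeated factors at all; the correct dichotomy is ``$F$ reduced'' (where your count works and produces the star configuration) versus ``$F$ non-reduced'' (where you must handle repeated factors of arbitrary degree, not just repeated lines). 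The second gap is in the repeated-factor case itself: from $F=L^2G$ you get $G\in(I_{\X'}^{(2)})_{\alpha-1}$ for $\X'=\X\setminus V(L)$, and part $(i)$ then yields only $\alpha(I_{\X'})\le\alpha-2$, which is not absurd on its face and certainly does not by itself ``force $\X'$ to be empty.'' The missing observation is that if $\X'\ne\emptyset$ and $H\in(I_{\X'})_{\alpha-2}$ is nonzero, then $LH$ is a nonzero form of degree $\alpha-1$ vanishing on all of $\X$, contradicting $\alpha(I_\X)=\alpha$; an analogous multiplication argument is needed to dispose of a repeated factor of higher degree. With these repairs the strategy can be made to work, but as written the proof does not close.
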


We now come to the main result of this section.

\begin{thm}\label{partialconverse}
  Let $\X$ be a set of $\binom{\alpha+1}{2}$ points of $\mathbb{P}^2$
  with the generic Hilbert function.
  If $\sdef(I_\X,2) = 1$, then 
  $\X$ is a linear star configuration.
\end{thm}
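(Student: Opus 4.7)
The plan is to show that the hypotheses force $\alpha(I_\X^{(2)}) = \alpha + 1$, at which point Theorem \ref{BCresults}$(ii)$ identifies $\X$ as a linear star configuration. The ``collinear points'' alternative in that theorem is ruled out here: if $\sdef(I_\X,2) = 1$ then $I_\X$ is not a complete intersection (so in particular $\X$ is not a single point), which forces $\alpha \geq 2$, whereas a set of collinear points has $\alpha(I_\X) = 1$.

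I would begin by recording the setup. By Lemma \ref{equivconditions}, $I_\X$ has $\alpha + 1$ minimal generators, all of degree $\alpha$, and a linear free resolution. Since $\sdef(I_\X, 2) = 1$, we may write $I_\X^{(2)} = I_\X^2 + \langle F \rangle$ for some homogeneous $F \notin I_\X^2$; let $d = \deg F$. Because $I_\X^2$ is generated in degree $2\alpha$, we have $(I_\X^2)_t = 0$ for $t < 2\alpha$, and therefore $\alpha(I_\X^{(2)}) = d$. Combining Theorem \ref{BCresults}$(i)$ with Lemma \ref{degreebound} yields the range $\alpha + 1 \leq d \leq 2\alpha - 1$.

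To pin $d$ down, I would compute the Hilbert function of $R/I_\X^{(2)}$ at $t = 2\alpha - 1$ and compare it with a global upper bound. For $d \leq t < 2\alpha$, since $(I_\X^2)_t = 0$ and multiplication by the nonzero form $F$ is injective in the domain $R$, we have $\dim_k (I_\X^{(2)})_t = \dim_k (R_{t-d} \cdot F) = \binom{t-d+2}{2}$; in particular,
\begin{equation*}
  \H_{R/I_\X^{(2)}}(2\alpha - 1) = \binom{2\alpha + 1}{2} - \binom{2\alpha + 1 - d}{2}.
\end{equation*}
On the other hand, the double-point scheme defined by $I_\X^{(2)}$ is arithmetically Cohen--Macaulay of multiplicity $3|\X| = 3\binom{\alpha + 1}{2}$, so its Hilbert function is non-decreasing and bounded above by its (constant) Hilbert polynomial $3\binom{\alpha+1}{2}$.

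Combining the two gives
\begin{equation*}
  \binom{2\alpha + 1}{2} - \binom{2\alpha + 1 - d}{2} \leq 3\binom{\alpha+1}{2}.
\end{equation*}
Writing $d = \alpha + 1 + k$ with $0 \leq k \leq \alpha - 2$, an elementary binomial manipulation rewrites this inequality as $k(k + 1 - 2\alpha) \geq 0$. Since $\alpha \geq 2$ and $k \leq \alpha - 2$, the factor $k + 1 - 2\alpha$ is strictly negative, so the only solution is $k = 0$; that is, $d = \alpha + 1$. Hence $\alpha(I_\X^{(2)}) = \alpha(I_\X) + 1$, and Theorem \ref{BCresults}$(ii)$ finishes the proof. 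The main subtlety is to identify the correct degree $t = 2\alpha - 1$ at which to test the Cohen--Macaulay upper bound; once that choice is made, the remaining work is purely arithmetic.
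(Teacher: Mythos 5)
Your proof is correct and follows essentially the same route as the paper: restrict $\deg F$ to the range $[\alpha+1,\,2\alpha-1]$ via Theorem \ref{BCresults}$(i)$ and Lemma \ref{degreebound}, then compare $\H_{R/I_\X^{(2)}}(2\alpha-1) = \binom{2\alpha+1}{2}-\binom{2\alpha+1-d}{2}$ against the bound $3\binom{\alpha+1}{2}$ to force $d=\alpha+1$, and finish with Theorem \ref{BCresults}$(ii)$. The only (harmless) difference is the source of that upper bound: you take it from the multiplicity of the zero-dimensional double-point scheme together with the non-decreasing Hilbert function, whereas the paper obtains the identical number as $\H_{R/I_\X^2}(2\alpha)=\binom{2\alpha+2}{2}-\binom{\alpha+2}{2}$ via the generator count of Lemma \ref{nummingens} and then invokes monotonicity.
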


\begin{proof}
Since $\sdef(I_\X,2) = 1$, there exits a form $F$ such that 
$I_\X^{(2)} = \langle F\rangle + I_\X^2$.
By Lemma \ref{degreebound}, ${\deg} F < 2\alpha$.

We now show that we must, in fact, have  $\deg F \leq \alpha+1$.
By Lemma \ref{equivconditions},
$I_{\mathbb{X}}$ has $\alpha+1$ generators of degree $\alpha$.
By Lemma \ref{nummingens}, the ideal $I_{\mathbb{X}}^2$ will have
$\binom{\alpha+2}{2}$ minimal generators of degree $2\alpha$.
Because $I_{\mathbb{X}}^2 \subseteq I_{\mathbb{X}}^{(2)}$, this means
\begin{eqnarray*}
\H_{R/I_{\mathbb{X}}^{(2)}}(2\alpha) &\leq& \H_{R/I_{\mathbb{X}}^2}(2\alpha)
= \binom{2\alpha+2}{2} - \binom{\alpha+2}{2}\\
& = & \frac{(2\alpha+2)(2\alpha+1) - (\alpha+2)(\alpha+1)}{2} 
=  \frac{3\alpha^2+3\alpha}{2}.
\end{eqnarray*}
  
Suppose that $\deg F > \alpha+1$.   Because $I_{\mathbb{X}}^2$ is generated 
by forms of degree $2\alpha$ or larger, we have
\[(I_{\mathbb{X}}^{(2)})_{2\alpha-1} = [\langle F\rangle + I_{\mathbb{X}}^2]_{2\alpha-1} = 
[\langle F\rangle]_{2\alpha-1},\]
and consequently, 
\[\H_{R/I_{\mathbb{X}}^{(2)}}(2\alpha-1) = \H_{R/\langle F\rangle}(2\alpha-1)
= \binom{2\alpha+1}{2} - \dim_\k \langle F\rangle_{2\alpha-1}.\]
If $\deg F = d$, then $\langle F\rangle \cong R(-d)$ as graded $R$-modules,
so $\dim_\k \langle F\rangle_{2\alpha -1} = \dim_\k R_{2\alpha-1-d} = \binom{2\alpha-d+1}{2}$.
Because $d \geq \alpha+2$, we have
\begin{eqnarray*}
\H_{R/I_{\mathbb{X}}^{(2)}}(2\alpha-1) &= &
\binom{2\alpha+1}{2} - \binom{2\alpha-d+1}{2}  \\
& \geq & \binom{2\alpha+1}{2} - \binom{2\alpha-(\alpha +2)+1}{2}  \\
& = & \frac{(2\alpha+1)(2\alpha) - (\alpha-1)(\alpha-2)}{2} \\
& = & \frac{4\alpha^2 + 2\alpha -(\alpha^2- 3\alpha +2)}{2} 
=  \frac{3\alpha^2+ 5\alpha - 2}{2}.
\end{eqnarray*}

Since $\sdef(I_\X,2) \neq 0$, $\mathbb{X}$ is not a complete intersection, and
thus $\X$ cannot be a set of points on a line.  Consequently, $\alpha \geq 2$.
But then we must have
\[  \H_{R/I_{\mathbb{X}}^{(2)}}(2\alpha-1) \geq 
\frac{3\alpha^2+ 5\alpha  - 2}{2} > 
 \frac{3\alpha^2+3\alpha}{2} \geq \H_{R/{I_\mathbb{X}^{(2)}}}(2\alpha).\]
This is a contradiction,  so $\deg F \leq \alpha+1$ as claimed.

Because ${\deg} F > \alpha$ by Theorem \ref{BCresults}, we must have
$\deg F = \alpha+1$.  Hence
$\alpha(I_{\mathbb{X}}^{(2)}) = \alpha(I_{\mathbb{X}}) +1$.  Theorem
\ref{BCresults} then implies that $\X$ is a either a linear star
configuration or a set of colinear points.  If $\X$ was a set of
colinear points, then Theorem \ref{completeintersection} would imply
that $\sdef(I_\X,2) = 0$ because colinear points are a complete
intersection.  Thus $\X$ must be a linear star configuration in
$\mathbb{P}^2$.
\end{proof}

\begin{rem}
  As we will see in Section 6, there exist sets of points $\X$ in
  $\mathbb{P}^2$ with $\sdef(I_\X,2)=1$, but $\X$ is not a linear star
  configuration.
\end{rem}

\begin{rem}
  It is natural to ask if a similar type of result holds for points in
  $\mathbb{P}^n$ with $n \geq 3$, i.e., if $\sdef(I_\X,2)=1$,
  along with some suitable hypotheses on $\X$, implies that $\X$ must
  be a linear star configuration.  However, this cannot happen.
  Indeed, if such a set of points $\X$ existed, then
  $\X = V(I_{n,\mathcal{L}})$ for some $n \geq 3$ and set of linear
  forms $\mathcal{L}$, because $\X$ is a zero-dimensional scheme.  But
  then we would have $\sdef(I_{n,\mathcal{L}},2)=1$,
  contradicting Theorem \ref{sdefect-one-classify}.
\end{rem}


\section{Application: Resolutions of squares of star configurations in $\P^n$}

In this section, we use Corollary \ref{cor:general-symb-square} to
describe a minimal free resolution of the symbolic square of the
defining ideal $I_{2,\mathcal{F}}$ of a codimension two star
configuration in $\mathbb{P}^n$.

\begin{lem}\label{L:20170926-501}
  Let $I_{2,\mathcal{F}}$ be the defining ideal of a star
  configuration of codimension two in $\mathbb{P}^n$. Assume
  $\mathcal{F}=\{F_1,\ldots,F_s\}$, where $F_1,\ldots,F_s$ are forms
  of degrees $1\leq d_1\leq \cdots \leq d_s$, and let
  $d=d_1+\cdots+d_s$. Then a graded minimal free resolution of
  $I_{2,\mathcal{F}}^2$ has the form
  \begin{equation*}
    0 \to R^{\binom{s-1}{2}}(-2d) \to \bigoplus_{1\leq i\leq s}
    R^{s-1}(-(2d-d_i)) \to \bigoplus_{1\leq i,j\leq s} R(-(2d-(d_i+d_j)))
    \to I_{2,\mathcal{F}}^2 \to 0.
  \end{equation*}
\end{lem}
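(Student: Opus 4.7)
The plan is to apply Theorem \ref{resix2} to $I_{2,\mathcal{F}}$; this requires first producing the graded minimal free resolution of $I_{2,\mathcal{F}}$ itself, and then checking that the ideal satisfies the three hypotheses (perfect, codimension two, generic complete intersection).

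To produce the resolution of $I_{2,\mathcal{F}}$, I would begin by invoking Theorem \ref{pro:star-generators}, which says that $I_{2,\mathcal{F}}$ is minimally generated by the $s$ forms $G_i := \prod_{j \ne i} F_j$, where $\deg G_i = d - d_i$. The obvious equalities $F_i G_i = F_j G_j = F_1 \cdots F_s$ produce syzygies of the form $F_i e_i - F_j e_j$, where $e_i$ is the basis vector in degree $d - d_i$ mapping to $G_i$; each such syzygy is homogeneous of total degree $d$, and a standard check shows that $s-1$ of them (e.g.\ those with $j = i+1$) are linearly independent. Since $I_{2,\mathcal{F}}$ is a perfect codimension two ideal (star configurations of codimension two are ACM), the Hilbert--Burch theorem then forces the graded minimal free resolution to be
\begin{equation*}
    0 \to R(-d)^{s-1} \to \bigoplus_{i=1}^s R(-(d - d_i)) \to I_{2,\mathcal{F}} \to 0.
\end{equation*}

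Next, I would confirm the remaining hypothesis of Theorem \ref{resix2}, namely that $I_{2,\mathcal{F}}$ is a generic complete intersection. The minimal primes of $I_{2,\mathcal{F}}$ are exactly the height two ideals $P = \langle F_{i_1}, F_{i_2}\rangle$ appearing in the intersection defining the star configuration. Localizing at such a $P$ turns every $F_k$ with $k \notin \{i_1, i_2\}$ into a unit, so each generator $G_k$ collapses to a unit multiple of $F_{i_1}$, $F_{i_2}$, or $F_{i_1}F_{i_2}$. Consequently $I_{2,\mathcal{F}} R_P = \langle F_{i_1}, F_{i_2}\rangle R_P$, which is a complete intersection of height two.

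Finally, with $F = R(-d)^{s-1}$ and $G = \bigoplus_{i=1}^s R(-(d-d_i))$ in hand, I would compute the three relevant functors directly:
\begin{equation*}
\bigwedge^2 F = R(-2d)^{\binom{s-1}{2}}, \quad
F \otimes \mathrm{Sym}^1 G = \bigoplus_{i=1}^s R\bigl(-(2d - d_i)\bigr)^{s-1}, \quad
\mathrm{Sym}^2 G = \bigoplus_{1 \le i \le j \le s} R\bigl(-(2d - d_i - d_j)\bigr),
\end{equation*}
and substitute these into the exact sequence provided by Theorem \ref{resix2}. Minimality is automatic from that theorem. The main obstacle, such as it is, lies in the first step: correctly identifying the graded twists in the resolution of $I_{2,\mathcal{F}}$. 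This, however, is essentially a bookkeeping exercise once the generators $G_i$ and the degree-$d$ syzygies $F_i e_i - F_j e_j$ are in hand; everything else is a mechanical application of Theorem \ref{resix2}.
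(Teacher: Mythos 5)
Your proposal follows essentially the same route as the paper: establish that $I_{2,\mathcal{F}}$ is perfect, codimension two, and a generic complete intersection by localizing at its minimal primes, then feed the length-one graded minimal free resolution of $I_{2,\mathcal{F}}$ into Theorem \ref{resix2} and compute $\bigwedge^2 F$, $F\otimes \mathrm{Sym}^1 G$, and $\mathrm{Sym}^2 G$ (the paper simply cites \cite{PS:1} for the resolution of $I_{2,\mathcal{F}}$ rather than rederiving it via Hilbert--Burch as you do). One small imprecision: the minimal primes of $I_{2,\mathcal{F}}$ are not literally the ideals $\langle F_{i_1},F_{i_2}\rangle$, since the $F_i$ need not be irreducible and these complete intersections need not be prime; however, every minimal prime $P$ has height two and contains a unique such pair while omitting every other $F_k$ (three of the $F$'s form a regular sequence, which cannot sit inside a height-two prime), so your localization computation applies verbatim and the conclusion stands.
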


\begin{proof} By \cite[Theorem 3.4]{PS:1}, the ideal
  $I_{2,\mathcal{F}}$ has a graded minimal free resolution of the form
$$
0 \to R^{s-1}(-d) \to \bigoplus _{1\leq i\leq s}R(-(d-d_i)) \to
I_{2,\mathcal{F}} \to 0.
$$
Recall that
$$
I_{2,\mathcal{F}}=\bigcap_{1\leq i < j\leq s} \langle F_i,F_j\rangle.
$$
Let $P$ be a minimal prime of $I_{2,\mathcal{F}}$ in $R$. Then $P$ has
height $2$.

\noindent {\em Claim.} There exists a unique pair $(i,j)$ such that
$\langle F_i,F_j\rangle \subseteq P$.

\noindent {\em Proof of Claim.} The existence of the pair follows from
\cite[Prop.~1.11]{AM}. Assume
$\langle F_\alpha,F_\beta\rangle \subseteq P$ for some indices
$\alpha,\beta$ with $\{\alpha,\beta\}\ne \{i,j\}$. Without loss of
generality, we may assume that $\alpha\ne i,j$. Then
$F_i,F_j, F_\alpha \in P$, which is a contradiction, since
$F_i,F_j,F_\alpha$ form a regular sequence of length 3 but $P$ has
height 2.  \hfill$\Box$

\medskip

It follows from the claim that
\begin{equation*}
  (I_{2,\mathcal{F}})_P =\bigcap_{1\leq k < l \leq s} \langle F_k,F_l\rangle_P
  = \langle F_i,F_j\rangle_P = \big\langle \tfrac{F_i}{1},\tfrac{F_j}{1}\big\rangle.
\end{equation*}
Since localization preserves regular sequences,
$(I_{2,\mathcal{F}})_P$ is a complete intersection ideal in $R_P$. We
deduce that $I_{2,\mathcal{F}}$ is a generic complete intersection
ideal.  Since $I_{2,\mathcal{F}}$ is also a perfect codimension two
ideal, we can apply Theorem
\ref{resix2} to derive the stated graded minimal free resolution of
$I_{2,\mathcal{F}}^2$.
\end{proof}

\begin{lem}\label{L:20160715-602}
  Let $I_{2,\mathcal{F}}$ be the defining ideal of a star
  configuration of codimension two in $\mathbb{P}^n$. Assume
  $\mathcal{F}=\{F_1,\ldots,F_s\}$, and set $F=F_1\cdots F_s$. Then
  \begin{enumerate}[label=(\roman*)]
  \item\label{colon-lemma-i}
    $ [I_{2,\mathcal{F}}^2:F]=\big\langle F_{i_1}\cdots F_{i_{s-2}}
    \mid 1\leq i_1 < \dots < i_{s-2} \leq s \big\rangle =I_{3,\mathcal{F}}$;
  \item\label{colon-lemma-ii}
    $I_{2,\mathcal{F}}^2 \cap \langle F\rangle = F[I_{2,\mathcal{F}}^2:F]$.
  \end{enumerate}
\end{lem}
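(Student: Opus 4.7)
The plan is to handle the two parts separately, with part (ii) following from a standard general identity and part (i) constituting the substantive content. For part (i), the second equality is immediate from Theorem \ref{pro:star-generators} applied with $c=3$, so only the first equality $[I_{2,\mathcal{F}}^2 : F] = I_{3,\mathcal{F}}$ requires proof. The inclusion $\supseteq$ follows from the observation $F \cdot F/(F_kF_l) = (F/F_k)(F/F_l) \in I_{2,\mathcal{F}}^2$ for each generator $F/(F_kF_l)$ of $I_{3,\mathcal{F}}$. For the reverse inclusion, given $g$ with $Fg \in I_{2,\mathcal{F}}^2$, I would expand
\begin{equation*}
Fg = \sum_{k < l} a_{kl}(F/F_k)(F/F_l) + \sum_{i} a_{ii}(F/F_i)^2
\end{equation*}
and reduce the equation modulo each $F_i$ in turn. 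Every term on the right is visibly divisible by $F_i$ except $a_{ii}(F/F_i)^2$, so the congruence $0 \equiv a_{ii}(F/F_i)^2 \pmod{F_i}$ holds. Because each pair $F_i, F_j$ extends to a triple forming a regular sequence (by the codimension-two star configuration hypothesis), the element $F/F_i = \prod_{j \ne i} F_j$ is a nonzerodivisor in $R/(F_i)$, and so is its square; this forces $F_i \mid a_{ii}$.

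Writing $a_{ii} = F_i c_i$, the term $F_i c_i (F/F_i)^2$ rewrites as $c_i F \cdot (F/F_i)$, and since $(F/F_k)(F/F_l) = F \cdot F/(F_kF_l)$ for $k \ne l$, the entire right-hand side of the expansion factors through $F$. Cancelling $F$ (a nonzerodivisor in the polynomial domain $R$) yields
\begin{equation*}
g = \sum_{k<l} a_{kl}\, F/(F_kF_l) + \sum_{i} c_i\, F/F_i.
\end{equation*}
Each $F/(F_kF_l)$ is a generator of $I_{3,\mathcal{F}}$ by Theorem \ref{pro:star-generators}, and each $F/F_i$ may be written as $F_j \cdot F/(F_iF_j)$ for any $j \ne i$, so $F/F_i$ also lies in $I_{3,\mathcal{F}}$. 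Hence $g \in I_{3,\mathcal{F}}$, completing part (i).

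Part (ii) reduces to the general identity $I \cap (f) = f(I:f)$, valid for any ideal $I$ and element $f$ in a commutative ring: an element $b = fc$ lies in $I$ precisely when $c \in I:f$. Applied with $f = F$ and $I = I_{2,\mathcal{F}}^2$, this yields the claim. The main obstacle of the argument is the modular reduction step in part (i) showing $F_i \mid a_{ii}$; this is the only place where the regular sequence structure of the star configuration is genuinely used, and it controls the otherwise awkward diagonal terms $(F/F_i)^2$ that prevent a direct division by $F$.
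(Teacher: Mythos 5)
Your proof is correct and follows essentially the same route as the paper's: the same expansion of $Fg$ in terms of the generators $F^2/(F_kF_l)$, the same key step showing $F_i \mid a_{ii}$ (you phrase it via nonzerodivisors modulo $F_i$ where the paper uses coprimality from the regular sequence hypothesis), the same cancellation of $F$, and the same elementary verification of the identity $I\cap\langle F\rangle = F[I:F]$ for part (ii). No substantive differences.
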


\begin{proof} $\ref{colon-lemma-i}$ First, recall that
$$
I_{2,\mathcal{F}}^2=\bigg\langle \frac{F^2}{F_iF_j}
\,\bigg|\, 1\leq i\leq j\leq s\bigg\rangle.
$$
Given indices $1\leq i_1 < \dots < i_{s-2} \leq s$, let
$\{i_{s-1},i_s\}$ be the complement of $\{i_1, \dots, i_{s-2}\}$ in
$\{1,\ldots,s\}$. Then we have
$$
(F_{i_1}\cdots F_{i_{s-2}}) F =\frac{F^2}{F_{i_{s-1}}
  F_{i_{s}}} \in I_{2,\mathcal{F}}^2,
$$
and so $F_{i_1}\cdots F_{i_{s-2}}\in [I_{2,\mathcal{F}}^2:F]$.

Conversely, let $G\in [I_{2,\mathcal{F}}^2 : F]$. Since
$G F \in I_{2,\mathcal{F}}^2$, we have that
\begin{equation}
  \label{EQ:20160715-101}
  G F = \sum_{1\leq i\leq s} A_i \frac{F^2}{F_i^2}
  + \sum_{1\leq i< j\leq s} B_{i,j} \frac{F^2}{F_iF_j}
\end{equation}
for some $A_i, B_{i,j}\in R$.

\medskip

\noindent
{\em Claim.} For every $1\leq i \leq s$, $F_i$ divides $A_i$.

\medskip

\noindent {\em Proof of Claim.} For $i=1$,
$$
G F=A_1 \frac{F^2}{F_1^2}+ \sum_{2\leq i\leq
  s} A_i \frac{F^2}{F_i^2}+ \sum_{1\leq i< j\leq s}
B_{i,j} \frac{F^2}{F_iF_j}.
$$
Hence
\begin{equation*}
  G F-
  \sum_{1\leq i< j\leq s} B_{i,j} \frac{F^2}{F_iF_j} - 
  \sum_{2\leq i\leq s} A_i \frac{F^2}{F_i^2} = A_1  \frac{F^2}{F_1^2}.
\end{equation*}
For all $h\neq 1$, $F_1, F_h$ is, by assumption, a regular
sequence. This implies that $F_1$ and $F_h$ are coprime.  Therefore
$F_1$ must divide $A_1$ because $F_1$ divides every
term on the left hand side.  Similarly, one can show that $F_i$ divides
$A_i$ for all $1\leq i\leq s$.  \hfill$\Box$

Let $A_i=F_i A_i'$ for some $A_i'\in R$. We can rewrite
equation~\eqref{EQ:20160715-101} as
$$
G F= \sum_{1\leq i\leq s} A_i' \frac{F^2}{F_i}+ \sum_{1\leq i< j\leq s}
B_{i,j} \frac{F^2}{F_iF_j}.
$$
Dividing both sides by $F$, we obtain
$$
G=\sum_{1\leq i\leq s} A_i' \frac{F}{F_i}+ \sum_{1\leq i<
  j\leq s} B_{i,j} \frac{F}{F_iF_j}
$$
proving that $G$ is in
$\big\langle F_{i_1}\cdots F_{i_{s-2}} \mid 1\leq i_1 < \dots <
i_{s-2} \leq s \big\rangle$.

\noindent
$\ref{colon-lemma-ii}$ If $G \in I_{2,\mathcal{F}}^2 \cap \langle F\rangle$, then
$G = G'F \in I_{2,\mathcal{F}}^2$.  So
$G' \in [I_{2,\mathcal{F}}^2:F]$, and thus
$G = FG' \in F[I_{2,\mathcal{F}}^2:F]$.
Conversely, if $H \in F[I_{2,\mathcal{F}}^2:F]$, we have $H = FH'$
with $H' \in [I_{2,\mathcal{F}}^2:F]$.  It is then immediate that
$H \in I_{2,\mathcal{F}}^2 \cap \langle F\rangle$, which completes the proof of
this lemma.
\end{proof} 

\begin{thm}\label{T:20160710-802}
  Let $I_{2,\mathcal{F}}$ be the defining ideal of a star
  configuration of codimension two in $\mathbb{P}^n$. Assume
  $\mathcal{F}=\{F_1,\ldots,F_s\}$, where $F_1,\ldots,F_s$ are forms
  of degrees $1\leq d_1\leq \cdots \leq d_s$, and let
  $d=d_1+\cdots+d_s$. Then a graded minimal free resolution of
  $I_{2,\mathcal{F}}^{(2)}$ has the form
  \begin{equation*}
    0 \to \bigoplus_{1\leq i\leq s} R(-(2d-d_i)) \to \left(
      \bigoplus_{1\leq i\leq s}R(-(2d-2d_i)) \right) \oplus
    R(-d)\to I_{2,\mathcal{F}}^{(2)} \to 0.
  \end{equation*}
\end{thm}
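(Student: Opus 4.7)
The plan is to construct the resolution explicitly and then verify its exactness. By Corollary \ref{cor:general-symb-square}, I will first write $I_{2,\mathcal{F}}^{(2)} = \langle F \rangle + I_{2,\mathcal{F}}^2$ where $F = F_1 \cdots F_s$ is the single minimal generator of $I_{1,\mathcal{F}}$. Since $I_{2,\mathcal{F}}^2$ is generated by the products $F^2/(F_iF_j)$ for $1 \leq i \leq j \leq s$, and since each mixed term $F^2/(F_iF_j) = F \cdot (F/(F_iF_j))$ with $i<j$ already lies in $\langle F \rangle$, only the ``squared'' terms $g_i := F^2/F_i^2$ together with $F$ remain as minimal generators. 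This identifies the rightmost nonzero free module in the claimed resolution and defines the surjection $\psi$ sending one basis vector to $F$ and the $i$-th of the remaining basis vectors to $g_i$.

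Next, for each $i=1,\ldots,s$ the identity $F_i g_i = (F/F_i) F$ gives a syzygy of degree $2d - d_i$. I would define
\[\phi \colon \bigoplus_{i=1}^s R(-(2d-d_i)) \longrightarrow R(-d) \oplus \bigoplus_{i=1}^s R(-(2d-2d_i))\]
by sending the $i$-th basis vector $v_i$ to $-(F/F_i)\,e_0 + F_i\,e_i$, where $e_0$ is the basis of $R(-d)$ and $e_i$ is the basis of $R(-(2d-2d_i))$. Then $\psi \circ \phi = 0$ by construction, and $\phi$ is injective: if $\phi(\sum a_i v_i) = 0$, its $e_i$-component reads $a_i F_i = 0$, which forces $a_i = 0$ in the domain $R$.

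The main obstacle will be showing $\ker\psi \subseteq \operatorname{im}\phi$. For this, my plan is to localize at the height-one prime $\langle F_i \rangle$, which is a DVR since $R$ is a UFD and each $F_i$ is a prime element. A direct computation of $F_i$-valuations gives $v_{F_i}(F)=1$, $v_{F_i}(g_i)=0$, and $v_{F_i}(g_j)=2$ for $j\neq i$. Thus in any syzygy $a_0 F + \sum_j a_j g_j = 0$ every term other than $a_i g_i$ has $F_i$-valuation at least $1$, which forces $v_{F_i}(a_i) \geq 1$; since $F_i$ is prime in $R$, this gives $F_i \mid a_i$ in $R$. Writing $a_i = F_i b_i$ and substituting yields $F\bigl(a_0 + \sum_i b_i (F/F_i)\bigr) = 0$, and cancellation of the non-zerodivisor $F$ gives $a_0 = -\sum_i b_i (F/F_i)$. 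Hence the syzygy equals $\sum_i b_i\,\phi(v_i)$, as required.

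Finally, minimality is automatic: the nonzero entries of $\phi$ are $\pm F/F_i$ (of degree $d-d_i \geq s-1 \geq 2$, using $s \geq n+1 \geq 3$ in the codimension-two case) and $F_i$ (of degree $d_i \geq 1$), while the entries of $\psi$ are $F$ and $g_i$, all of positive degree. Thus every matrix entry lies in the graded maximal ideal, so the complex is a graded minimal free resolution of $I_{2,\mathcal{F}}^{(2)}$ of the stated form.
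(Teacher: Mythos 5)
Your argument is correct in substance but takes a genuinely different route from the paper. The paper also starts from Corollary \ref{cor:general-symb-square}, but then builds the short exact sequence $0 \to I_{2,\mathcal{F}}^2 \cap \langle F\rangle \to I_{2,\mathcal{F}}^2 \oplus \langle F\rangle \to I_{2,\mathcal{F}}^{(2)} \to 0$, computes $[I_{2,\mathcal{F}}^2:F]=I_{3,\mathcal{F}}$ (Lemma \ref{L:20160715-602}), imports the known resolutions of $I_{3,\mathcal{F}}$ from \cite{PS:1} and of $I_{2,\mathcal{F}}^2$ from Weyman's theorem (Lemma \ref{L:20170926-501}), forms a mapping cone, and then uses the Cohen--Macaulayness of $I_{2,\mathcal{F}}^{(2)}$ from \cite{GHMN} to force the cancellations that shrink the cone to the stated resolution. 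You instead exhibit the $s+1$ generators $F, g_1,\ldots,g_s$ and the $s$ Koszul-type syzygies $F_i g_i=(F/F_i)F$ directly, and prove exactness by a divisibility argument in the UFD $R$. Your route is more elementary and self-contained: it avoids Weyman's resolution of $I^2$, the mapping cone, and the appeal to Cohen--Macaulayness (indeed it reproves that $I_{2,\mathcal{F}}^{(2)}$ is Cohen--Macaulay as a byproduct), at the cost of not producing the auxiliary information (the colon ideal computation, the resolution of $I_{2,\mathcal{F}}^2$) that the paper records along the way.

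One step needs repair. You assert that each $F_i$ is a prime element of $R$, so that $\langle F_i\rangle$ is a height-one prime and localization gives a DVR. The definition of a star configuration only requires that every three of the $F_i$ form a regular sequence; the $F_i$ need not be irreducible, so $\langle F_i\rangle$ need not be prime. The conclusion you want, namely $F_i \mid a_i$, still holds, but the correct justification is coprimality: the regular sequence hypothesis forces $\gcd(F_i,F_k)=1$ for $k\neq i$, hence $F_i$ is coprime to $g_i=\prod_{k\neq i}F_k^2$, while $F_i$ divides $F$ and $F_i^2$ divides $g_j$ for $j\neq i$; so the relation $a_ig_i=-(a_0F+\sum_{j\neq i}a_jg_j)$ shows $F_i$ divides $a_ig_i$ and therefore divides $a_i$. (Equivalently, run your valuation argument at each irreducible factor $p$ of $F_i$ with multiplicities.) With this substitution the remainder of your proof --- injectivity of $\phi$, the cancellation of the non-zerodivisor $F$, and minimality from all matrix entries lying in the irrelevant maximal ideal --- goes through as written.
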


\begin{proof}
  Let $F = F_1\cdots F_s$.  Thanks to Corollary
  \ref{cor:general-symb-square}, there is a short exact sequence
  \begin{equation*}
    0 \to I_{2,\mathcal{F}}^2 \cap \langle F\rangle
    \to I_{2,\mathcal{F}}^2 \oplus \langle F\rangle
    \to I_{2,\mathcal{F}}^{(2)} \to 0.
  \end{equation*}

  We proceed to describe a minimal free resolution of the left term.
  By Lemma~\ref{L:20160715-602} $\ref{colon-lemma-i}$,  $[I_{2,\mathcal{F}}:F] = I_{3,\mathcal{F}}$. By \cite[Theorem 3.4]{PS:1}, a minimal free resolution of $I_{3,\mathcal{F}}$ has the form
  \begin{equation*}
    0 \to R^{\binom{s-1}{2}}(-d) \to
    {\ds\bigoplus_{1\leq i\leq s}}R^{s-2}(-(d-d_i))\to 
    {\ds\bigoplus_{1\leq i<j\leq s} R(-(d-(d_i+d_j))}.
  \end{equation*}
  By Lemma~\ref{L:20160715-602} $\ref{colon-lemma-ii}$, we have
  $I_{2,\mathcal{F}}^2 \cap \langle F\rangle=F[I_{2,\mathcal{F}}^2:F]
  = F I_{3,\mathcal{F}}$.  Since $F$ has degree $d$, to obtain a
  minimal free resolution of $F I_{3,\mathcal{F}}$ it is enough to add
  $d$ to the degrees of the generators of the free modules in the
  resolution above. More explicitly, a minimal free resolution of
  $I_{2,\mathcal{F}}^2 \cap \langle F\rangle$ has the form
  \begin{equation*}
    0 \to R^{\binom{s-1}{2}}(-2d) \to
    {\bigoplus_{1\leq i\leq s}}R^{s-2}(-(2d-d_i))\to 
    {\bigoplus_{1\leq i<j\leq s} R(-(2d-(d_i+d_j))}.
  \end{equation*}

  Next we describe a minimal free resolution of the middle term.
  We found a minimal free resolution for $I^2_{2,\mathcal{F}}$ in Lemma~\ref{L:20170926-501}. Since $0\to R(-d) \to \langle F\rangle \to 0$ is a minimal free resolution of $\langle F\rangle$, we can take a direct sum of the resolutions of the two ideals to obtain the complex
  \begin{equation*}
    0\to R^{\binom{s-1}{2}}(-2d)
    \to \bigoplus_{1\leq i\leq s}R^{s-2}(-(2d-d_i)) \to
    \left(\bigoplus_{1\leq i,j\leq s} R(-(2d-(d_i+d_j)))\right) \oplus R(-d),
  \end{equation*}
  which is a minimal free resolution of $I_{2,\mathcal{F}}^2 \oplus \langle F\rangle$.

  Our goal is to describe a minimal free resolution of the right term in the short exact sequence. Using a mapping cone construction \cite{HS}, we obtain a free resolution of $I_{2,\mathcal{F}}^{(2)}$ of the form
  \begin{multline*}
    0 \to R^{\binom{s-1}{2}}(-2d) \to
    \left(\bigoplus_{1\leq i\leq s}R^{s-2}(-(2d-d_i)) \right)
    \oplus R^{\binom{s-1}{2}}(-2d) \to\\
    \to \left(\bigoplus_{1\leq i<j\leq s} R(-(2d-(d_i+d_j)))\right) \oplus
    \left( \bigoplus_{1\leq i\leq s}R^{s-1}(-(2d-d_i))\right) \to\\
    \to \left(\bigoplus_{1\leq i,j\leq s} R(-(2d-(d_i+d_j)))\right) \oplus R(-d).
  \end{multline*}

  The ideal $I_{2,\mathcal{F}}^{(2)}$ is Cohen-Macaulay by
  \cite[Corollary 3.7]{GHMN}.  In particular, a graded minimal free
  resolution of $I_{2,\mathcal{F}}^{(2)}$ has length 1.  Hence the
  $R^{\binom{s-1}{2}}(-2d)$ at the end of the resolution must cancel
  out the $R^{\binom{s-1}{2}}(-2d)$ in the penultimate module.  In
  addition, $\bigoplus_{1\leq i\leq s} R^{s-2}(-(2d-d_i))$ must cancel
  with part of $\bigoplus_{1\leq i\leq s} R^{s-1}(-(2d-d_i))$ in
  homological degree two.  After these cancellations, we are left with
  the smaller resolution
  \begin{multline*}
    0\to \left(\bigoplus_{1\leq i<j\leq s} R(-(2d-(d_i+d_j)))\right)
    \oplus
    \left(\bigoplus_{1\leq i\leq s}R(-(2d-d_i)) \right) \to\\
    \to \left(\bigoplus_{1\leq i,j\leq s} R(-(2d-(d_i+d_j)))\right)
    \oplus R(-d)
  \end{multline*}
  of $I_{2,\mathcal{F}}^{(2)}$.
  By Corollary \ref{cor:general-symb-square},
  $I_{2,\mathcal{F}}^{(2)}$ has exactly one generator of degree $d$,
  namely $F$, and the rest of the generators have degrees
  $2d-(d_i+d_j)$.  The generators of degree $2d-(d_i+d_j)$ with
  $d_i\neq d_j$ are redundant since they are multiples of $F$.  Each
  redundant generator gives rise to a relation of degree
  $2d-(d_i+d_j)$ that expresses the redundant generator in terms of
  the minimal ones. As such, we can remove these redundant generators
  along with the corresponding relations.  We are then left with
  \begin{equation*}
    0 \to \bigoplus_{1\leq i\leq s} R(-(2d-d_i)) \to \left(
      \bigoplus_{1\leq i\leq s}R(-(2d-2d_i)) \right) \oplus
    R(-d)\to I_{2,\mathcal{F}}^{(2)} \to 0.
  \end{equation*}
  This resolution must now be minimal since no further cancellation is
  possible.
\end{proof}

\begin{rem}\label{generalize}
If $I_{2,\mathcal{L}}$ defines a linear star configuration in $\mathbb{P}^n$,
our formula agrees with the formula of \cite[Theorem 3.2]{GHM} with
$c=2$; thus Theorem \ref{T:20160710-802} is a generalization of \cite{GHM}
in the sense that the star configuration
need not be linear.
\end{rem}


\section{General sets of points}

In this section, we study general sets $\X$ of
points in $\mathbb{P}^2$.  Specifically, 
we  characterize when $\sdef(I_\X,2) = 1$.

Recall that a property holds for {\em a general set} of $s$ points in $\P^n$ 
if the subset of $(\P^n)^s$ for which it holds contains a nonempty open subset.
%
If $\mathbb{X} \subseteq \mathbb{P}^n$ is a general set of points, then $\mathbb{X}$ has the 
{\it generic Hilbert function}, 
that is,
\[
\H_{R/I_\mathbb{X}}(i) = \min\{\dim_\k R_i, |\mathbb{X}|\} 
~~\mbox{for all $i \geq 0$}.
\]

The key ingredient that we require is the following
famous result of Alexander and Hirschowitz which
computes the Hilbert function of $R/I_{\mathbb{X}}^{(2)}$ 
when $\mathbb{X}$ is a set of general points in 
$\mathbb{P}^n$ (we have specialized their
result to  $\mathbb{P}^2$).  Roughly speaking, except
if $s =2$ or $5$, the Hilbert function of $R/I_{\mathbb{X}}^{(2)}$ is the 
generic Hilbert function of $3|\mathbb{X}|$ points.

\begin{thm}[{\cite[Theorem 2]{AH:1}}]
\label{AH}
Let $\mathbb{X}$ be a set of $s$ general points 
in $\mathbb{P}^2$.  If $s \neq 2,5$, then
\[\H_{R/I_{\mathbb{X}}^{(2)}}(i) = \min\{\dim_\k R_i, 3s \} ~~\mbox{for
all $i \geq 0$.}\]
If $s = 5$, then 
\[\H_{R/I_{\mathbb{X}}^{(2)}}(i) = 
\begin{cases}
\min\{\dim_\k R_i, 3s \} & i \neq 4 \\
14 & i = 4. 
\end{cases}
\]
\end{thm}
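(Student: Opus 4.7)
My plan is to combine semi-continuity of the Hilbert function with a Castelnuovo--Horace induction on $s$. We rewrite $\H_{R/I_\X^{(2)}}(i) = \binom{i+2}{2} - \dim_\k (I_\X^{(2)})_i$, where $(I_\X^{(2)})_i$ is the space of degree $i$ forms vanishing to order two at each of the $s$ points. Since each double point imposes at most three linear conditions on $R_i$, the value $\min\{\binom{i+2}{2},3s\}$ is always an upper bound for $\H_{R/I_\X^{(2)}}(i)$, and the task reduces to proving equality outside the listed defective cases. Because $\dim_\k (I_\X^{(2)})_i$ is upper semi-continuous on the parameter space $(\mathbb{P}^2)^s$, the Hilbert function is lower semi-continuous, so it suffices to exhibit for each $(s,i)$ a single specialization of $\X$ attaining the claimed value; genericity of $\X$ then forces the generic value.

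For the small cases $s\leq 5$ I would carry out a direct linear algebra check. The defective value $14$ (instead of $15$) at $(s,i)=(5,4)$ is explained by the unique smooth conic $C$ through five general points: the quartic $C^2 \in R_4$ vanishes to order two at every point of $\X$, contributing one extra independent section to $(I_\X^{(2)})_4$ beyond the naive count. For $s\geq 6$ I would induct using Horace specialization: split $\X = \X'\sqcup\X''$ so that $\X''$ lies on a fixed line $L$. If $f\in (I_\X^{(2)})_i$ restricts to zero on $L$ then $f = Lg$ with $g \in (I_{\X'}^{(2)}\cap I_{\X''})_{i-1}$, since dividing out $L$ drops a double point on $L$ to a simple one away from $L$; meanwhile the restriction $f|_L$ is a degree-$i$ form on $L\cong\mathbb{P}^1$ vanishing doubly at $\X''$. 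This yields the exact sequence
\[
0 \longrightarrow (I_{\X'}^{(2)}\cap I_{\X''})_{i-1} \longrightarrow (I_\X^{(2)})_i \longrightarrow V,
\]
where $V \subseteq H^0(L,\mathcal{O}_L(i))$ is the space of degree-$i$ forms on $L$ vanishing doubly at $\X''$. Choosing $|\X''|$ so that the residual $\mathbb{P}^2$-problem in degree $i-1$ (with mixed simple and double conditions) and the $\mathbb{P}^1$-problem in degree $i$ both lie within the inductive hypothesis, and arranging the geometry so that the right-hand map is surjective, lets the dimensions add up correctly.

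The main obstacle, and the reason this is a deep theorem rather than a routine induction, is the bookkeeping: one must choose $|\X''|$ so that \emph{both} subproblems simultaneously achieve maximal rank, and a naive choice typically leaves a parity obstruction of one. The differential Horace method of Alexander and Hirschowitz resolves this by further degenerating a portion of the specialized double points to infinitesimally close collinear pairs and tracking the limiting ranks. Verifying that the numerical balance can always be achieved for $s\geq 6$, and that the exceptional obstruction at $(s,i)=(5,4)$ really is isolated and does not propagate into a hidden infinite exceptional family, is the technical heart of the argument and the reason I would, in practice, cite \cite{AH:1} rather than reprove it.
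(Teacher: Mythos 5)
The paper gives no proof of this statement---it is quoted directly from Alexander--Hirschowitz \cite{AH:1}---and your proposal likewise defers the decisive step, the numerical balancing via the differential Horace method, to that same reference; so in substance both rest on the citation, which is appropriate for a result of this depth. The surrounding sketch is accurate: the bound $\H_{R/I_\X^{(2)}}(i)\le\min\{\dim_\k R_i,3s\}$ from counting conditions, the reduction by upper semicontinuity of $\dim_\k(I_\X^{(2)})_i$ to exhibiting one configuration of maximal rank, the square of the conic through five general points accounting for the value $14$ at $(s,i)=(5,4)$ (just as the square of the line through two points accounts for the excluded case $s=2$), and the exact sequence $0\to (I_{\X'}^{(2)}\cap I_{\X''})_{i-1}\to (I_\X^{(2)})_i\to V$ obtained by restricting to a line are all correct and are precisely the ingredients of the known proof, and you correctly identify the parity obstruction that forces the differential refinement. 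The only caveat is that, as written, this is an outline of the argument of \cite{AH:1} rather than an independent proof; since the paper itself offers only the citation, that is not a defect for present purposes.
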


In fact, the graded minimal free resolution of 
$I_\X$ and $I_\X^{(2)}$ for $s$ general
points in $\mathbb{P}^2$ is known. The resolution of $I_\X$ and $I_\X^{(2)}$
is the cumulative work of many people.
For sets $\X$ of simple points, the minimal resolution of $I_\X$
was  worked out by Geramita and Maroscia \cite{GM}, Geramita,
Gregory, and Roberts \cite{GGR}, and Lorenzini \cite{L}.

For $I^{(2)}_\X$, Catalisano's work \cite{C} determines the resolution
of $I^{(2)}_\X$ for $s\le 5$, while Id\`a \cite{I} handles the case of $s>5$ 
(thereby recovering known results when $s\le 9$, and proving a conjecture 
of Harbourne \cite[Conjecture 6.3]{H1} in the special case of $m=2$).
We record only the consequences we need.

%
%

\begin{lem}\label{specialcases}
Let $\X$ be a set of $s$ general points in $\mathbb{P}^2$.
\begin{enumerate}
\item[$(i)$] If $s = 5$, then the graded minimal free resolution
of $I_\X$, respectively $I_\X^{(2)}$, is
\[0 \longrightarrow R^2(-4) \longrightarrow R(-2) \oplus R^2(-3)
\longrightarrow I_{\mathbb{X}} \longrightarrow 0, ~~\mbox{respectively}\]
\[0 \rightarrow  R^2(-6) \oplus R(-7) \rightarrow R(-4) \oplus R^3(-5)
\rightarrow I_\X^{(2)} \rightarrow 0.\]
\item[$(ii)$] 
If $s = 7$, then the graded minimal free resolution
of $I_\X$, respectively $I_\X^{(2)}$, is
\[0 \rightarrow R(-4) \oplus R(-5) \rightarrow R^3(-3) \rightarrow I_\X
\rightarrow 0, ~~\mbox{respectively}\]
\[0 \rightarrow R^6(-7) \rightarrow R^7(-6) \rightarrow I_\X^{(2)} \rightarrow 0.\]
\item[$(iii)$] 
If $s = 8$, then the graded minimal free resolution
of $I_\X$, respectively $I_\X^{(2)}$, is
\[0 \rightarrow R^2(-5) \rightarrow R^2(-3) \oplus R(-4) \rightarrow I_\X
\rightarrow 0, ~~\mbox{respectively}\]
\[0 \rightarrow R^3(-8) \rightarrow R^4(-6) \rightarrow  I_\X^{(2)} \rightarrow 0.\]
\item[$(iv)$] 
If $s = 9$, then the graded minimal free resolution
of $I_\X$, respectively $I_\X^{(2)}$, is
\[0 \rightarrow R^3(-5) \rightarrow R(-3)\oplus R^3(-4)
\rightarrow  I_\X
\rightarrow 0, ~~\mbox{respectively}\]
\[0 \rightarrow R^6(-8) \rightarrow R(-6) \oplus R^6(-7) 
\rightarrow I_{\mathbb{X}}^{(2)} \rightarrow 0.\]
\end{enumerate}
\end{lem}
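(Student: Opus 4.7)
The plan is to reduce the four cases to a purely numerical computation and then invoke the cited classification theorems to upgrade the Hilbert-function information to the minimal resolution. Since $\X$ (resp.\ $2\X$, the scheme of double points on $\X$) is a zero-dimensional subscheme of $\mathbb{P}^2$, the quotient rings $R/I_\X$ and $R/I_\X^{(2)}$ are one-dimensional and Cohen--Macaulay, so by Auslander--Buchsbaum the ideals $I_\X$ and $I_\X^{(2)}$ each have projective dimension one. In particular, minimal free resolutions have the shape
\begin{equation*}
0 \longrightarrow F_1 \longrightarrow F_0 \longrightarrow I \longrightarrow 0,
\end{equation*}
and the graded shifts in $F_0$ and $F_1$ are constrained by the Hilbert series of $I$ via the identity $\H_I(t)(1-t)^3 = \sum_j \beta_{0,j} t^j - \sum_j \beta_{1,j} t^j$.

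First I would compute $\H_{R/I_\X}(i) = \min\{\binom{i+2}{2},s\}$ from the generic Hilbert function assumption, then compute $\H_{R/I_\X^{(2)}}$ using the Alexander--Hirschowitz Theorem \ref{AH} (which, among our four cases, contributes one exceptional jump only when $s=5$, namely $\H_{R/I_\X^{(2)}}(4)=14$ instead of $15$). Once these Hilbert functions are in hand, the generator degrees of $I_\X$ and $I_\X^{(2)}$ are read off directly (the number of minimal generators in degree $d$ equals $\dim_\k I_d - 3\dim_\k I_{d-1} + 3\dim_\k I_{d-2} - \dim_\k I_{d-3}$), and the numerator of $\H_I(t)(1-t)^3$ determines the syzygy shifts up to possible cancellation. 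For example, for $s=5$ the Hilbert function gives one generator in degree $2$ and two in degree $3$ for $I_\X$, and one generator in degree $4$ and three in degree $5$ for $I_\X^{(2)}$; a bookkeeping of the Hilbert series then predicts exactly the syzygy modules $R^2(-4)$ and $R^2(-6)\oplus R(-7)$ stated in (i), with the $R(-7)$ summand corresponding precisely to the Alexander--Hirschowitz deficiency at $i=4$. Cases (ii)--(iv) are handled by the same numerical recipe.

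The subtle point, and the main obstacle, is ruling out ghost terms, i.e., showing that the candidate Betti numbers read off from the Hilbert series are actually the minimal Betti numbers and no cancelling pair of generators and syzygies has been introduced. For $I_\X$ this is precisely what the works of Geramita--Maroscia \cite{GM}, Geramita--Gregory--Roberts \cite{GGR}, and Lorenzini \cite{L} on ideals of general points in $\mathbb{P}^2$ establish, and one simply specializes their formulas to $s\in\{5,7,8,9\}$. For $I_\X^{(2)}$ the corresponding statement is Harbourne's minimal-resolution conjecture for double points, which Catalisano \cite{C} verified for $s\le 5$ (giving case (i)), which Harbourne \cite{H1} verified directly for $s\le 9$ (giving (ii), (iii), and (iv)), and which Id\`a \cite{I} later settled in full generality. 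Specializing the relevant formulas to the values $s=5,7,8,9$ and matching them against the shifts predicted by our Hilbert series computation completes the proof.
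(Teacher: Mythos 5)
Your proposal is correct and follows essentially the same route as the paper, which offers no proof of this lemma beyond the paragraph preceding it citing Geramita--Maroscia, Geramita--Gregory--Roberts, and Lorenzini for $I_\X$ and Catalisano, Harbourne, and Id\`a for $I_\X^{(2)}$. Your added Hilbert-series bookkeeping (via genericity and Theorem \ref{AH}) correctly reproduces all the stated shifts, and you rightly identify that minimality --- i.e., the absence of cancelling generator--syzygy pairs --- is exactly what the cited minimal resolution results supply.
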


We now present the main result of this section.

\begin{thm}\label{generalpoints}
Let $\X$ be a set of $s$ general points in $\mathbb{P}^2$.
Then 
\begin{enumerate}
\item[$(i)$] $\sdef(I_\X,2) = 0$ if and only if $s = 1,2$ or 
$4$.
\item[$(ii)$] $\sdef(I_\X,2) =1$ if and only if $s =3, 5, 7$,
or $8$.
\item[$(iii)$] $\sdef(I_\X,2) > 1$ if and only if $s=6$ or $s \geq 9$.
\end{enumerate}
\end{thm}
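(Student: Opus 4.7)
The plan is to treat each value of $s$ as a separate case and, in each one, extract enough information about graded minimal free resolutions of $I_\X$, $I_\X^{(2)}$, and $I_\X^2$ to count minimal generators of the quotient $I_\X^{(2)}/I_\X^2$. First, for $s \in \{1,2,4\}$ the $s$ general points form a complete intersection (of types $(1,1)$, $(1,2)$, $(2,2)$ respectively), so Lemma \ref{sdefectlemma} yields $\sdef = 0$; for any other $s$, the generic Hilbert function of $\X$ is incompatible with that of a codimension-two complete intersection, and Theorem \ref{completeintersection} gives $\sdef \geq 1$. This settles part (i) and produces the lower bounds needed for (ii) and (iii).

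For part (ii), I handle $s=3$ by observing that three non-collinear points are exactly the pairwise intersections of the three lines through them in pairs, so $I_\X = I_{2,\mathcal{L}}$ with $|\mathcal{L}|=3$ is a linear star configuration, and Theorem \ref{sdefect-one-classify} gives $\sdef = 1$. For $s \in \{5,7,8\}$, I use Lemma \ref{specialcases} to read off minimal free resolutions of both $I_\X$ and $I_\X^{(2)}$, then apply Theorem \ref{resix2} to the resolution of $I_\X$ to obtain a minimal free resolution of $I_\X^2$. A Hilbert-function comparison degree by degree will show in each case that $I_\X^{(2)}$ has precisely one minimal generator beyond those of $I_\X^2$—located in the first degree where the two Hilbert functions disagree (via Theorem \ref{AH})—and that the ideals agree in all strictly higher degrees, so $\sdef = 1$.

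For part (iii), the case $s=6$ starts from the Hilbert-Burch resolution $0 \to R^3(-4) \to R^4(-3) \to I_\X \to 0$ (forced by the generic Hilbert function $(1,3,6,6,6,\ldots)$); Theorem \ref{resix2} then gives a resolution of $I_\X^2$ whose generators all sit in degree $\geq 6$, while Theorem \ref{AH} produces $\dim(I_\X^{(2)})_5 = 21-18 = 3$, so at least three minimal generators (all of degree $5$) must be added, yielding $\sdef \geq 3 > 1$. The case $s=9$ is entirely analogous using Lemma \ref{specialcases}(iv) and Theorem \ref{resix2}; the degree-$7$ Betti numbers of $I_\X^{(2)}$ and $I_\X^2$ differ by three, giving $\sdef \geq 3$. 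For $s \geq 10$, I apply Theorem \ref{AH} and compare $\binom{2\alpha+1}{2}$ with $3s$, where $\alpha = \alpha(I_\X)$: for the majority of such $s$ this shows $\alpha(I_\X^{(2)}) < 2\alpha = \alpha(I_\X^2)$, so the several minimal generators of $I_\X^{(2)}$ in degrees strictly below $2\alpha$ each contribute to $\sdef$, forcing $\sdef > 1$.

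The main obstacle is the uniform treatment of the infinite family $s \geq 10$. The comparison between $\alpha(I_\X)$ and $\alpha(I_\X^{(2)})$ is easy in most cases, but becomes delicate for boundary values of $s$ near $\binom{\alpha+2}{2}$, where the initial degrees of $I_\X^{(2)}$ and $I_\X^2$ can coincide. In those subcases I must instead show that the dimension of $(I_\X^{(2)})_{2\alpha}$ exceeds $\dim(I_\X^2)_{2\alpha}$ by at least two; this is done by combining the Hilbert-Burch resolution of $I_\X$ (extracted from its generic Hilbert function) with Theorem \ref{resix2} to compute $\dim(I_\X^2)_{2\alpha}$ explicitly, then comparing against the value of $\dim(I_\X^{(2)})_{2\alpha}$ supplied by Theorem \ref{AH}. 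Because only finitely many boundary $s$ arise for each $\alpha$, this reduces to a finite arithmetic check.
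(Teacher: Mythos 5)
Your treatment of $s\in\{1,2,3,4,5,6,7,8,9\}$ matches the paper's argument essentially step for step, and is correct. The gap is in the infinite family $s\geq 10$. Your dichotomy there is: either $\alpha(I_\X^{(2)})<2\alpha$, in which case you claim ``several'' minimal generators of $I_\X^{(2)}$ sit in degrees strictly below $2\alpha=\alpha(I_\X^2)$ and each contributes to $\sdef$; or the initial degrees coincide and you compare dimensions in degree $2\alpha$. The first branch is not valid as stated: $\alpha(I_\X^{(2)})<2\alpha$ only guarantees at least \emph{one} generator below degree $2\alpha$, and there are values of $s$ where exactly one occurs. Concretely, for $s=18$ one has $\alpha=5$ and $3s=54=\binom{11}{2}-1$, so $\alpha(I_\X^{(2)})=9=2\alpha-1$ and $\dim_\k(I_\X^{(2)})_9=1$: a single form $F$ of degree $9$. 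Nothing in your argument rules out $I_\X^{(2)}=\langle F\rangle+I_\X^2$ here, and your fallback count in degree $2\alpha$ is only invoked when the initial degrees coincide, which they do not. The paper closes this case by a separate count in degree $10$: $\dim_\k(I_\X^{(2)})_{10}=66-54=12$, while $\dim_\k(\langle F\rangle+I_\X^2)_{10}\leq 3+6=9$ (six generators of $I_\X^2$ in degree $10$, by Lemma \ref{nummingens}), so one added generator cannot suffice.

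To repair that branch you need the paper's intermediate step: if $\sdef(I_\X,2)=1$ with $I_\X^{(2)}=\langle F\rangle+I_\X^2$, then $\deg F\geq 2\alpha-1$, because otherwise $\H_{R/I_\X^{(2)}}$ equals $3s$ in two consecutive degrees $d,d+1<2\alpha$ where $I_\X^{(2)}$ coincides with $\langle F\rangle$, forcing $\binom{d+2}{2}-1=\binom{d+3}{2}-3$, i.e.\ $d=0$, a contradiction. This reduces the problematic subfamily to those $s$ with $3s=\binom{2\alpha+1}{2}-1$; combined with $\binom{2\alpha}{2}\leq 3s<3\binom{\alpha+2}{2}$ this forces $\alpha\leq 11$, and a divisibility-by-$3$ check leaves only $s=18$ (after discarding $s=45,84$, which are inconsistent with their $\alpha$), eliminated as above. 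A smaller point on your boundary case: you should note that $\alpha(I_\X^{(2)})=2\alpha$ together with $s<\binom{\alpha+2}{2}$ forces $\alpha\leq 7$, hence $10\leq s\leq 35$; ``finitely many boundary $s$ for each $\alpha$'' is not by itself a finite check, since $\alpha$ is unbounded.
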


\begin{proof}
By Theorem \ref{completeintersection}, $\sdef(I_\X,2) = 0$ if and 
only if $\X$ is a complete intersection.  But a set of $s$
general points is a complete intersection if and only if $s =1,2,$ or $4$
(e.g., \cite[Exercise 11.9]{Harris}).  This proves $(i)$.

We next consider the special cases of $s=3,5,6,7,8,9$.

If $s = 3$, then $\mathbb{X}$ is also a linear star configuration.
Indeed, for each pair of points $P_i,P_j$ with $i \neq j$, take the unique
line $L_{i,j}$ through those two points.  Then $I_\X = I_{2,\mathcal{L}}$ where
$\mathcal{L} = \{L_{1,2},L_{1,3},L_{2,3}\}$.
Then $\sdef(I_\X,2) = 1$ by Theorem 
\ref{sdefect-one-classify}.

For the cases $s = 5,6,7,$ and $9$, we first observe that
\[\dim_\k(I_\mathbb{X}^{(2)}/I_\mathbb{X}^2)_{\alpha(I_\mathbb{X}^{(2)}/I_\mathbb{X})}
\leq \sdef(I_\X,2) \leq \sum_{t \geq 0} \dim_\k (I_\mathbb{X}^{(2)}/I_\mathbb{X}^2)_t.
\]
We can use Theorem \ref{resix2} and Lemma \ref{specialcases} to
find the Hilbert functions of $R/I_{\X}^{(2)}$ and $R/I_\X^2$ for
$s = 5,6,7,$ and $9$.  In these four cases, we will find 
that $\dim_\k (I_\mathbb{X}^{(2)})_t = \dim_\k(I_\mathbb{X}^2)_t = 0$ if $t \neq 
\alpha(I_\mathbb{X}^{(2)}/I_\mathbb{X})$, and consequently the 
above inequalities give
 \[\dim_\k(I_\mathbb{X}^{(2)}/I_\mathbb{X}^2)_{\alpha(I_\mathbb{X}^{(2)}/I_\mathbb{X})}
= \sdef(I_\X,2).\] 
Furthermore, we can use these Hilbert functions to compute
the symbolic defect;  precisely,
\[\sdef(I_\X,2) = \begin{cases}
1 & \mbox{if $s = 5,7$} \\
3 & \mbox{if $s = 6,9$}.
\end{cases}
\]

When $s = 8$, the Hilbert functions of $I_\X^{(2)}$ and $I_\X^{2}$ disagree
in two degrees, so the above approach does not work.
Instead, if $s =8$, then Lemma \ref{specialcases} $(iii)$ implies that
$\alpha(I_\X) = 3$ and $I_\X$ has two minimal generators
of degree 3.  So, $I_\X^2$ has three minimal generators of degree 6.
By Lemma \ref{specialcases} $(iii)$, 
$\alpha(I_{\X}^{(2)}) = 6$ and $I_\X^{(2)}$ has four minimal generators
of degree 6.
So, there exists a form $F \in (I_\X^{(2)})_6 \setminus (I_\X^2)_6$.
But since $I_\X^{(2)}$ is generated by
these four generators of degree 6, 
$I_{\mathbb{X}}^{(2)} = \langle F \rangle + I_{\mathbb{X}}^2$,
that is, $\sdef(I_\X,2) = 1$.


Going forward, we now assume that $s \geq 10$. Our goal is to
show that $\sdef(I_\X,2) > 1$.  To do this, we first will show
that if $\sdef(I_X,2) =1$ and $F$ is any homogeneous form 
such that $I_\X^{(2)} = \langle F \rangle + I_\X^2$, then
the degree of $F$ is restricted.
Below, $\alpha = \alpha(I_\X)$.

\noindent
{\it Claim.}  If $s \geq 10$ and $I_\X^{(2)} = \langle F \rangle + I_\X^2$, 
then $\deg{F} \geq 2\alpha -1$.

\medskip

\noindent
{\it Proof of Claim.}  Suppose that $d = \deg F \leq 2\alpha -2$.
Because $s \geq 10$, 
\[\H_{R/I_{\mathbb{X}}^{(2)}}(d) = \H_{R/I_{\mathbb{X}}^{(2)}}(d+1) = 3|\mathbb{X}|
~~\mbox{by Theorem \ref{AH}}.\]
On the other hand, since $I_{\mathbb{X}}^{(2)} = \langle F \rangle  
+ I_{\mathbb{X}}^2$ and $\deg F \leq 2\alpha-2$, we have
\[\dim_\k (I_{\mathbb{X}}^{(2)})_d = \dim_\k \langle F \rangle_d  = 1  ~~~\mbox{and} 
~~\dim_\k (I_{\mathbb{X}}^{(2)})_{d+1} = \dim_\k \langle F \rangle_{d+1}  = 3.\]
But this then means that 
\begin{eqnarray*}
\binom{d+2}{2} - 1 &= & \H_{R/I_{\mathbb{X}}^{(2)}}(d) = 3|\mathbb{X}| 
=  \H_{R/I_{\mathbb{X}}^{(2)}}(d+1) = \binom{d+3}{2} - 3.
\end{eqnarray*}
So $d$, the degree of $F$, would have to satisfy
\[
\binom{d+2}{2} - \binom{d+3}{2} +2  = 0 \Leftrightarrow d = 0.
\]
But $\deg F > 0$.  So, $\deg F \geq 2\alpha-1$. \hfill$\Box$

Now suppose that $s \geq 10$ and $\sdef(I_\X,2) = 1$.
Consequently, there is a homogeneous form $F$ 
such that $I_\X^{(2)} = \langle F \rangle + I_\X^2$,
and furthermore,  by the above claim, $\deg F \geq 2\alpha-1$ 
where $\alpha = \alpha(I_\X)$.
We now consider the cases $\deg F = 2\alpha-1$,
and $\deg F \geq 2\alpha$ separately.

\noindent
{\it Case 1.} $I_\X^{(2)} = \langle F \rangle  + I_\X^2$ 
with  $\deg F = 2\alpha -1$.

\noindent
If $\deg F = 2\alpha-1$, then we first claim that $\alpha \leq 11$.
Indeed, by Theorem \ref{AH} 
\[\H_{R/I_{\mathbb{X}}^{(2)}}(2\alpha-2) = \binom{2\alpha}{2} \leq 3|\mathbb{X}|
= \H_{R/I_{\mathbb{X}}^{(2)}}(2\alpha-1) = \binom{2\alpha+1}{2} -1\]
where the last equality follows from the fact that $I_{\mathbb{X}}^{(2)}$
has exactly one generator of degree $2\alpha-1$.  On the other hand,
we know that $|\mathbb{X}| < \binom{\alpha+2}{2}$ since 
$s$ general points have the generic Hilbert function, 
so $\alpha$ is by definition the smallest number $i$ such that $\binom{i+2}{2} >
s = |\mathbb{X}|$.  Combining these inequalities, we have
\[\binom{2\alpha}{2} \leq 3|\mathbb{X}| <  3\binom{\alpha+2}{2}.\]
or equivalently, $\alpha$ must satisfy
\[
\frac{2\alpha(2\alpha-1)}{2}-\frac{3(\alpha+2)(\alpha+1)}{2} < 0 
\Leftrightarrow \alpha^2 -11\alpha-6 < 0.
\]
But the last inequality only holds if $\alpha \leq 11$.  
Since we are also assuming that  $|\X| \geq 10$,
we have $4 \leq \alpha \leq 11$.

As we noted above, if $\deg F = 2\alpha-1$,
then $3|\mathbb{X}| = \binom{2\alpha+1}{2}-1$ must be also be satisfied.
Via a direct calculation, we see that $\binom{2\alpha+1}{2}-1$
is divisible by $3$ with $4 \leq \alpha \leq 11$ if and only
if $\alpha = 5, 8, 11$.


So, if $3|\X| = \binom{2\alpha+1}{2}-1$ with $4 \leq \alpha \leq 11$,
then we have $|\X| = 54/3 = 18$, or $|\X| = 135/3 = 45$, or $|\X| = 252/3 = 84$.
In all other cases, we cannot have  
$I_{\mathbb{X}}^{(2)} = \langle F \rangle + I_{\mathbb{X}}^2$.  

However, if $|\X| = 45$, then $\alpha(I_\X) = 9$, not $8$.  
Also, if $|\X| = 84$, then $\alpha(I_\X) = 12$, not $11$.  If
$|\X| = 18$, then $\alpha(I_\X) = 5$.  So we need
a separate argument to show  that  $I_{\mathbb{X}}^{(2)} \neq \langle F \rangle  + I_{\mathbb{X}}^2$.  

So, let $s = 18$ with $\alpha = 5$ and  suppose that
$I_{\mathbb{X}}^{(2)} = \langle F \rangle + I_{\mathbb{X}}^2$ with $\deg{F} = 9$.
Then $(I_{\mathbb{X}}^{(2)})_{10} = (F+ I_{\mathbb{X}}^2)_{10}$.  Now
by Theorem \ref{AH}, $\dim_\k (I_{\mathbb{X}}^{(2)})_{10} = 12$.
On the other hand, $I_{\mathbb{X}}$ has three generators of degree $\alpha = 5$,
so by Lemma \ref{nummingens}, $I_{\mathbb{X}}^2$ has six generators of 
degree $2\alpha = 10$, and no smaller generators.  So
$\dim_\k (\langle F \rangle + I_{\mathbb{X}}^2)_{10} \leq \dim_\k (\langle F \rangle)_{10} + 
\dim_\k (I_{\mathbb{X}}^2)_{10} = 3+6 = 9$.  So, by a dimension count, we cannot
have $I_{\mathbb{X}}^{(2)} = \langle F \rangle + I_{\mathbb{X}}^2$.  

To summarize this case, if $s \geq 10$, there is no set 
of $s$ general points with 
$I_\X^{(2)} = \langle F \rangle + I_\X^2$ with $\deg F = 2\alpha-1$. 

\noindent
{\it Case 2.} $I_\X^{(2)} = \langle F \rangle  + I_\X^2$ with  $\deg F \geq 2\alpha$.

\noindent
If $\deg F \geq 2\alpha$, then we claim that $\alpha \leq 7$.
Indeed, since $I_{\mathbb{X}}^{(2)}$ will be generated by forms of
degree $2\alpha$ or larger, we have
\[\H_{R/I_{\mathbb{X}}^{(2)}}(2\alpha-1) = \binom{2\alpha+1}{2} \leq 3|\mathbb{X}|. \]
On the other hand, $|\mathbb{X}| < \binom{\alpha+2}{2}$.  Combining these
two inequalities gives
\[\binom{2\alpha+1}{2} \leq 3|\mathbb{X}| < 3\binom{\alpha+2}{2}.\]
So, $\alpha$ must satisfy
\[(2\alpha+1)(2\alpha) < 3(\alpha+2)(\alpha+1) \Leftrightarrow \alpha^2-7\alpha-6 < 0 \Leftrightarrow \alpha \leq 7.\]
Moreover, because $s \geq 10$, we have $4 \leq \alpha \leq 7$, 
or equivalently, $10 \leq s = |\mathbb{X}| \leq 35$. 

Let $d = \binom{\alpha+2}{2} - |\mathbb{X}|$, that is, $d$ is the number 
of minimal generators of $I_{\mathbb{X}}$ of degree $\alpha$.  
If $\deg F = 2\alpha$  and $I_{\mathbb{X}}^{(2)} = \langle F \rangle + I_{\mathbb{X}}^2$,
then $I_{\mathbb{X}}^{(2)}$ has $\binom{d+1}{2}+1$ minimal generators of degree $2\alpha$.
If $\deg F > 2\alpha$ and  $I_{\mathbb{X}}^{(2)} = \langle F \rangle  
+ I_{\mathbb{X}}^2$, then
$I_{\mathbb{X}}^{(2)}$ has $\binom{d+1}{2}$ minimal generators of degree $2\alpha$.
So, we will have
\[\H_{R/I_{\mathbb{X}}^{(2)}}(2\alpha) = 3|\mathbb{X}| =
\left\{
\begin{array}{ll}
\ds\binom{2\alpha+2}{2} -\binom{d+1}{2}- 1 & \mbox{if $\deg F = 2\alpha$},\\[2ex]  
\ds\binom{2\alpha+2}{2} -\binom{d+1}{2} & \mbox{if $\deg F > 2\alpha$}.
\end{array}
\right.
\]

Thus, to summarize,
if $I_{\mathbb{X}}^{(2)} = \langle F \rangle  + I_{\mathbb{X}}^2$ with
$\deg F \geq 2\alpha$, then
\begin{enumerate}
\item[$(a)$] $10 \leq |\mathbb{X}| \leq 35$,
\item[$(b)$] $\binom{2\alpha+1}{2} \leq 3|\mathbb{X}| < \binom{2\alpha+2}{2}$,
and  
\item[$(c)$] either 
$3|\mathbb{X}| =
\binom{2\alpha+2}{2} -\binom{d+1}{2}- 1$ or $3|\mathbb{X}| =
\binom{2\alpha+2}{2} -\binom{d+1}{2}$ must hold with 
$d = \binom{\alpha+2}{2} - |\mathbb{X}|$.
\end{enumerate}
A direct computation for each value $10 \leq |\X| \leq 35$ shows
that no value of $|\X|$ satisfies both of $(b)$ and $(c)$.  
Table \ref{table2} explicitly verifies this statement;
note that in the table, (T) denotes true
and (F) denotes false.

\footnotesize
\begin{table}[h!]
\def\arraystretch{1.2}
\begin{tabular}{|c|c|c|c|c|c|}
\hline
$|\mathbb{X}|$ & $\alpha = \alpha(I_{\mathbb{X}})$ & 
$\binom{2\alpha+1}{2} \leq 3|\mathbb{X}| < \binom{2\alpha+2}{2}$ & $d$ & $\binom{2\alpha+2}{2} -\binom{d+1}{2}- 1
= 3|\mathbb{X}|$ 
& $\binom{2\alpha+2}{2} -\binom{d+1}{2} = 3|\mathbb{X}|$\\
\hline
\hline
10 & 4 & $36 \leq 30 < 45$ (F) & & & \\
11 & 4 & $36 \leq 33 < 45$ (F) & & & \\
12 & 4 & $36 \leq 36 < 45$ (T) & 3& $39 = 36$ (F)& $38 =36$ (F)\\
13 & 4 & $36 \leq 39 < 45$ (T) & 2& $41 = 39$ (F)& $42 =39$ (F)  \\
14 & 4 & $36 \leq 42 < 45$ (T) & 1& $43 = 42$ (F)& $44 =42$ (F)\\
\hline
\hline
15 & 5 & $55 \leq 45 < 66$ (F) & & & \\
16 & 5 & $55 \leq 48 < 66$ (F) & & & \\
17 & 5 & $55 \leq 51 < 66$ (F) & & & \\
18 & 5 & $55 \leq 54 < 66$ (F) &  & & \\
19 & 5 & $55 \leq 57 < 66$ (T) & 2& $62 =57$ (F)& $63=57$ (F)\\
20 & 5 & $55 \leq 60 < 66$ (T) & 1& $64 =60$ (F)& $65=60$ (F)\\
\hline
\hline
21 & 6 & $78 \leq 63 < 91$ (F) & & & \\
22 & 6 & $78 \leq 66 < 91$ (F) & & & \\
23 & 6 & $78 \leq 69 < 91$ (F) & & & \\
24 & 6 & $78 \leq 72 < 91$ (F) & & & \\
25 & 6 & $78 \leq 75 < 91$ (F) & & & \\
26 & 6 & $78 \leq 78 < 91$ (T) &2 &$87=78$ (F) &$88 =78$ (F) \\
27 & 6 & $78 \leq 81 < 91$ (T) &1 &$89=81$ (F) &$90=81$ (F) \\
\hline
\hline
28 & 7 & $105 \leq 84 < 120$ (F) & & & \\
29 & 7 & $105 \leq 87 < 120$ (F) & & & \\
30 & 7 & $105 \leq 90 < 120$ (F) & & & \\
31 & 7 & $105 \leq 93 < 120$ (F) & & & \\
32 & 7 & $105 \leq 96 < 120$ (F) & & & \\
33 & 7 & $105\leq  99 < 120$ (F) & & & \\
34 & 7 & $105 \leq 102 < 120$ (F) & & & \\
35 & 7 & $105\leq 105 < 120$ (T) &1 &$118=105$ (F) &$119=105$ (F) \\
\hline
\end{tabular}
\\
\caption{Comparing inequalities and equalities with $\deg F  \geq 2\alpha$}\label{table2}
\end{table}
\normalsize

To summarize this case, if $s \geq 10$, there is no set 
of $s$ general points with 
$I_\X^{(2)} = \langle F \rangle + I_\X^2$ with $\deg F \geq  2\alpha$.  Thus
combining this case with the previous case, we see that
if $s \geq 10$, then $\sdef(I_X,2) > 1$, 
thus completing the proof.
\end{proof}

\begin{rem}
The special case $s=6$ in the Theorem \ref{generalpoints} can also
be explained by appealing to Theorem \ref{partialconverse}.  The ideal
$I_\X$ of six general points in $\mathbb{P}^2$ has a linear resolution.
So, if $\sdef(I_\X,2) =1$, then
the six points must be a linear star configuration by Theorem \ref{partialconverse}, and in particular,
three of the six points must be on the same line.  But
six general points is not a star configuration since three of
the six points cannot lie on a line.
\end{rem}

\begin{exmp}\label{8points}
As mentioned in the introduction, there are many questions
one can ask about the symbolic defect sequence.  We end this
section with an example to show that the symbolic defect sequence
need not be a non-decreasing sequence.  Consider the ideal
$I_\X$ when $\X$ is  eight general
points in $\mathbb{P}^2$.  Using \texttt{Macaulay2} \cite{Mt},
we found that the symbolic defect sequence $\{\sdef(I_\X,m)\}_{m=0}^{\infty}$ 
begins
\[0, 1, 3, 6, 10, 9, 7\]
and thus, the symbolic defect sequence can decrease. Understanding the 
long term behavior of this sequence would be of interest.
\end{exmp}


\end{document}